\documentclass[a4paper,12pt]{article}
\usepackage{CJKutf8}
\usepackage{CJK}
\usepackage{amsfonts}
\usepackage{pifont}
\usepackage{bm}
\usepackage{latexsym,amsmath,amssymb,cite,amsthm}
\usepackage{color,eucal,enumerate,mathrsfs}
\usepackage[normalem]{ulem}
\usepackage{amsmath}
\usepackage[colorlinks, linkcolor=magenta,  anchorcolor=blue,citecolor=blue]{hyperref}

\headsep3mm\topmargin-7mm\oddsidemargin0pt\evensidemargin0pt
\setlength{\textwidth}{150mm}\setlength{\textheight}{240mm}
\setlength{\parskip}{3pt plus1pt
minus2pt}\setlength{\baselineskip}{20pt plus2pt minus1pt}

\numberwithin{equation}{section}
\theoremstyle{plain}
\newtheorem{theorem}{Theorem}[section]
\newtheorem{corollary}[theorem]{Corollary}
\newtheorem{lemma}[theorem]{Lemma}
\newtheorem{proposition}[theorem]{Proposition}
\theoremstyle{definition}
\newtheorem{definition}[theorem]{Definition}
\theoremstyle{remark}

\newcommand{\geo}{\rm Geo}

\newcommand{\lmt}[2]{\mathop{\lim}_{{#1} \rightarrow {#2}} }

\newcommand{\lip}[1]{{\mathrm{lip}}({#1})}

\newcommand{\lmts}[2]{\mathop{\overline{\lim}}_{{#1} \rightarrow {#2}} }
\newcommand{\lmti}[2]{\mathop{\underline{\lim}}_{{#1} \rightarrow {#2}} }

%%         Ricci tensor
\newcommand{\Ric}{{\rm{Ricci}}}

\newcommand{\bRic}{{\bf Ricci}}

%%          linear algebra

\renewcommand{\H}{{\mathrm{Hess}}}

\newcommand{\Hess}{{\mathrm{Hess}}}

%%         curvature-dimension condition
\newcommand{\mm}{\mathfrak m}
\newcommand{\ms}{(X,\d,\mm)}
\newcommand{\cd}{{\rm CD}(K, \infty)}
\newcommand{\cdkn}{{\rm CD}(K, N)}
\newcommand{\mcpkn}{{\rm MCP}(K, N)}
\newcommand{\rcdkn}{{\rm RCD}(K, N)}
\newcommand{\rcd}{{\rm RCD}(K, \infty)}
\newcommand{\be}{{\rm BE}(K, \infty)}
\newcommand{\bekn}{{\rm BE}(K, N)}
\newcommand{{\vol}}{{\rm Vol}}
\newcommand{\g}{{\rm g}}
%%%%%%%%%%%%%%%%%%%%%%%%%%%%%%%%%%%%%%%%%%%%%%%%%%%%%%%%%%%%%%%%%%%%%%%%%%%%%
%%                                                                         %
%
%%     alphabet
%%

\newcommand{\La}{\mathrm{L}}

\newcommand{\Q}{\mathfrak{Q}}
\newcommand{\R}{\mathbb{R}}

%%

%% 特殊字符

\newcommand{\supp}{\mathop{\rm supp}\nolimits}   %%\newcommand{\span}{\mathop{\rm span}\nolimits}   %
\newcommand{\Lip}{\mathop{\rm Lip}\nolimits}
\newcommand{\loc}{{\rm loc}}
%%
%%       differential
%%
\renewcommand{\d}{{\mathrm d}}

\newcommand{\D}{{\mathrm D}}
\newcommand{\restr}[1]{\lower3pt\hbox{$|_{#1}$}}
\newcommand{\la}{{\langle}}
\newcommand{\ra}{{\rangle}}

\newcommand{\nchi}{{\raise.3ex\hbox{$\chi$}}}

\title{\large{\bf Measure rigidity of synthetic lower Ricci curvature bound on Riemannian manifolds}
}

\begin{document}
\author{Bang-Xian Han\thanks{Department of Mathematics, Technion-Israel Institute of Technology, han@technion.ac.il 
}
}

\date{\today} 
\maketitle

%%%%%%%%%%%%%%
% 摘要还需要更加详细
%%%%%%%%%%%%%%
\begin{abstract}
In this paper we investigate  Lott-Sturm-Villani's synthetic lower Ricci curvature bound on Riemannian manifolds  with boundary.  We prove several measure rigidity results related to  optimal transport on Riemannian manifolds, which  completely characterize   $\cd$ condition and non-collapsed $\cdkn$ condition on Riemannian manifolds with boundary. 
In particular, we reveal the measure rigidity  of  Riemannian interpolation inequality proved by Cordero-Erausquin, McCann and Schmuckenschl\"ager.
We prove  that  log-(semi)concave  measures  are  the only reference measures so that displacement convexity holds on Riemannian manifolds.  This is the first measure rigidity result concerning the synthetic dimension-free  CD  condition, which is new even on $\R^n$.     Using   $L^1$-optimal transportation  theory,   we  also prove that CD condition  yields  geodesical convexity (with respect to the usual Riemannian distance).

\end{abstract}

\textbf{Keywords}: measure rigidity, curvature-dimension condition,  metric measure space, Riemannian manifold,  boundary, Bakry-\'Emery theory, optimal transport.\\
\tableofcontents

\section{Introduction}

The synthetic theory of metric measure spaces with lower Ricci curvature bounds, initiated by Lott-Villani \cite{Lott-Villani09} and Sturm \cite{S-O1, S-O2}, has remarkable developments in recent years. 
We refer the reader to the survey \cite{AmbrosioICM} for an overview of this topic  and  bibliography. 

Many important results, previously known on Riemannian manifolds with lower Ricci curvature bound, now have their generalized versions in  synthetic setting. However, we still do not fully understand synthetic lower Ricci curvature bound on Riemannian manifolds.  In particular, we know very little about the dimension-free $\cd$ condition.   

 In this paper we return to the starting point of this  rapidly developing theory,  investigate  the following  conjecture  using   new tools and results developed  in  recent years.
 
  \noindent
{ \bf Conjecture:}
{\it  Let $(M, \g)$ be a complete Riemannian manifold  and ${\Omega}$ be a bounded open set.  Denote by  $\d_\Omega$  the intrinsic distance on $\overline{\Omega}$ induced by  $\g$. Then $ (\overline{\Omega}, \d_\Omega, \mm)$  is $\cd$ in the sense of Lott-Sturm-Villani  {if and only if} the following conditions are satisfied:
\begin{itemize}
\item [1)]   $\mm=e^{-V}{\vol}$ for some semi-convex function $V$,
\item [2)] $\Omega$ is geodesically convex.
\end{itemize}  
}

\bigskip

Next, we  introduce some backgrounds and explain the motivation  in more detail.  Let $(M, {\rm g})$ be a $n$-dimensional Riemannian manifold, and $\mm:=e^{-V}{\rm Vol}_{\rm g}$ be a weighted measure for some smooth function $V$.     The  diffusion operator associated with
the  smooth metric measure space  $(M, {\rm g}, \mm)$  is $\La=\Delta-\nabla V$ where $\Delta$ is the Laplace-Beltrami operator, and the well-known  Bakry-\'Emery's  $\Gamma_2$ operator is  defined by
\[
\Gamma_2(f):=\frac12 \La \Gamma(f,f) -\Gamma(f, \La f),\qquad\Gamma(f,f):=\frac12 \La (f^2)-f\La f.
\]
  It is known that  $\Gamma(\cdot, \cdot)={\rm g}(\nabla \cdot, \nabla \cdot )$,  and we have the following  generalized  Bochner's   formula
\begin{equation}\label{bf}
\Gamma_2(f)=\Ric(\nabla f, \nabla f)+\H_V(\nabla f, \nabla f)+\| \H_f \|_{\rm HS}^2
\end{equation}
 for any  $f\in C^\infty(M)$, where $\H_V=\D^2 V$ is the Hessian of $V$ and $\| \H_f \|_{\rm HS}$ is the Hilbert-Schmidt norm of  $\H_f$. We say  that  $(M, {\rm g}, \mm)$ satisfies the  $(K, N)$-Bakry-\'Emery's  condition (or $\bekn$ condition for short),  if the following  generalized Bochner inequality holds
\begin{equation}\label{eq1-intro}
\Gamma_2(f) \geq K\Gamma(f)+\frac1N (\La f)^2,~~~~\forall  f\in C^\infty(M).
\end{equation}
  It is known that   $\bekn$ condition yields many important geometric and analytic properties.  For example,  when $N=\infty$, we have the following equivalent  characterizations, which are also regarded as generalized lower Ricci curvature bound (c.f. \cite{SVR-T}).
  \begin{itemize}
  \item [{\bf 0)}] {Modified Ricci tensor  bound}:
  $$
 \Ric_V(\nabla f, \nabla f) := \Ric(\nabla f, \nabla f)+\H_V(\nabla f, \nabla f) \geq K|\nabla f|^2
  $$ for all $f\in C^\infty(M)$.
\item [{\bf 1)}] {$\be$ condition}: $\Gamma_2(f) \geq K \Gamma(f)$  for all $f\in C^\infty(M)$.
\item [{\bf 2)}]  {$\cd$ condition}:  $K$-displacement convexity of the entropy  functional  ${\rm Ent}(~ \cdot ~| \mm)$  on $L^2$-Wasserstein space $\mathcal {W}_2(M)=(\mathcal{P}_2(M), W_2)$ (with respect to the Riemannian distance $\d_\g$). This means,  for any $\mu_0, \mu_1 \in \mathcal {P}_2 (M)$ with $\mu_0, \mu_1 \ll \mm$, there  is  a $L^2$-Wasserstein geodesic $(\mu_t)_{t\in [0,1]}$ such that 
\begin{equation}\label{eq1.5-intro}
\frac K2 t(1-t)W^2_2(\mu_0, \mu_1)+{\rm Ent}(\mu_t |\mm) \leq t{\rm Ent}(\mu_1 | \mm)+(1-t){\rm Ent}(\mu_0 | \mm)
\end{equation}
where ${\rm Ent}(\mu_t| \mm):=\int \ln \rho_t\,\d \mu_t$ if $\mu_t=\rho_t\,  \mm$.
\item [{\bf 3)}]  {Gradient estimate of heat semi-group}:
\begin{equation}\label{eq2-intro}
|\nabla \mathrm{H}_t(f) |^2 \leq e^{-2Kt}\mathrm{H}_t(|\nabla  f|^2)
\end{equation}
for any $f\in W^{1,2}(M,  \mm)$, where $(\mathrm H_t)_{t>0}$ is the semi-group generated by the diffusion operator   $\La$.
\end{itemize}

Let $\Omega \subset M$ be an open  set  with smooth boundary, and  $\d_\Omega$ be  (the completion of)  the intrinsic distance on $\overline \Omega$  induced by the Riemannian distance $\d_\g$.  Concerning  the metric measure space $(\overline \Omega, \d_\Omega, e^{-V}{\rm Vol}_{\rm g})$, one would ask the following questions.

 \begin{itemize}
\item [{\bf Q-1}]  What is Bakry-\'Emery's $\Gamma$-calculus on $(\overline \Omega, \d_\Omega, e^{-V}{\rm Vol}_{\rm g})$,  and what does  $\Gamma_2 \geq K \Gamma$ mean in this case?
\item [{\bf Q-2}]  What does $\cd$ condition \eqref{eq1.5-intro} imply? Can we say that $\Omega$ is geodesically convex?
\item [{\bf Q-3}]  Does  the gradient estimate \eqref{eq2-intro}  of  (Neumann) heat semi-group  imply geodesical convexity? 
\end{itemize}

\bigskip

Firstly, in Section \ref{section:ricci} we  study  the Bakry-\'Emery's $\Gamma$-calculus on smooth metric measure spaces with  smooth boundary.  Using the vocabularies  and results  developed in non-smooth theory (c.f. \cite{S-S} and \cite{G-N}),   we  define a measure-valued Ricci tensor $\bRic_\Omega$ by
\begin{equation}\label{eq:intro-1}
\bRic_\Omega(\cdot, \cdot)=\Ric_V(\cdot, \cdot)\,e^{-V} {\rm Vol}_{\rm g}+II (\cdot, \cdot)\,e^{-V} \mathcal{H}^{n-1}\restr{\partial \Omega}
\end{equation}
where $\Ric_V=\Ric+\H_V$ is the Bakry-\'Emery's modified Ricci tensor and $II$ is the second fundamental form. Combining with  the results in \cite{AGS-B, AGS-M} and \cite{G-N}, we  can see  that  the measure-valued Bochner inequality  $\bRic_\Omega \geq K$ is equivalent to the non-smooth  $\be$ condition and the  Lott-Sturm-Villani's $\cd$ condition.  More precisely,  recall that  a connected  Riemannian manifold with boundary is geodesically convex if and only if $II \geq 0$   (c.f. Theorem 1.2.1 \cite{WFY-A}),  we have  the following theorem.

\begin{theorem}[Measure-valued Ricci tensor, Theorem \ref{th:ricci} and Corollary \ref{coro:rigid}] \label{theorem1}
Let $(M,  {\rm g}, e^{-V}{\rm Vol}_{\rm g})$ be a $n$-dimensional weighted  Riemannian manifold and $\Omega \subset M$ be a  submanifold  with $(n-1)$-dimensional smooth  orientable  boundary. Then  Gigli's  measure-valued Ricci tensor (c.f. \cite{G-N}) on  $(\overline\Omega, \d_\Omega, e^{-V}{\rm Vol}_{\rm g})$ 
is given  by
\begin{equation}
\bRic_\Omega(\nabla g, \nabla g)=\Ric_V(\nabla g, \nabla g)\,e^{-V} {\rm Vol}_{\rm g}+II(\nabla g, \nabla g) \,e^{-V} \mathcal{H}^{n-1}\restr{\partial \Omega}
\end{equation}
for any  $g\in C_c^\infty$ with $ {\rm g} ({\rm N}, \nabla g)=0$, where ${\rm N}$ is the outward normal vector field on $\partial \Omega$.

Furthermore,   $(\overline \Omega, \d_\Omega, e^{-V} {\rm Vol}_{\rm g})$ is a $\cd$ space  if and only if $\Ric_V \geq K$,  $II \geq 0$ and path-connected. 
\end{theorem}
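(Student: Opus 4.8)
\emph{Strategy.} The plan is to express Gigli's measure-valued Ricci tensor of $(\overline\Omega,\d_\Omega,\mm)$, $\mm=e^{-V}\vol$, through the boundary terms of a \emph{weighted Reilly--Bochner formula} on the manifold-with-boundary $\Omega$, and then to route the resulting measure inequality through the now-standard equivalences $\cd\Leftrightarrow\be\Leftrightarrow\{\bRic_\Omega\geq K\mm\}$. As a first step I would check that, since $\partial\Omega$ is smooth and $\d_\Omega$ is the intrinsic length distance, the Cheeger energy of $(\overline\Omega,\d_\Omega,\mm)$ is the quadratic form $f\mapsto\int_\Omega|\nabla f|_\g^2\,\d\mm$ with minimal weak upper gradient $|\nabla f|_\g$; hence the space is infinitesimally Hilbertian, its canonical Laplacian is the weighted Neumann operator $\La=\Delta-\nabla V$ with boundary condition $\g({\rm N},\nabla f)=0$, and the algebra $\mathcal A:=\{g\in C_c^\infty:\g({\rm N},\nabla g)=0\text{ on }\partial\Omega\}$ is a core of the test class of \cite{S-S,G-N} on which the measure-valued operators $\bm\Delta$ and $\bRic_\Omega$ are defined; by their locality and the density of $\mathcal A$ it suffices to compute $\bRic_\Omega$ on $\mathcal A$.

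\emph{Boundary Bochner computation.} Fix $g\in\mathcal A$. For a test function $\varphi$ I would expand the defining identity $\int\varphi\,\d\bm\Delta(\tfrac12|\nabla g|^2)=-\int_\Omega\g\big(\nabla\varphi,\nabla(\tfrac12|\nabla g|^2)\big)\,\d\mm$ by the weighted divergence theorem on $\Omega$ into an interior term $\int_\Omega\varphi\,\La(\tfrac12|\nabla g|^2)\,\d\mm$ and a boundary term $-\int_{\partial\Omega}\varphi\,\g\big(\nabla(\tfrac12|\nabla g|^2),{\rm N}\big)\,e^{-V}\,\d\mathcal H^{n-1}$. In the interior the weighted Bochner identity \eqref{bf} gives $\La(\tfrac12|\nabla g|^2)=\|\H_g\|_{\rm HS}^2+\g(\nabla g,\nabla\La g)+\Ric_V(\nabla g,\nabla g)$; on $\partial\Omega$, differentiating $\g(\nabla g,{\rm N})\equiv0$ in tangential directions and using that $\nabla g$ is then tangent to $\partial\Omega$ gives $\g\big(\nabla(\tfrac12|\nabla g|^2),{\rm N}\big)=\H_g({\rm N},\nabla g)=-II(\nabla g,\nabla g)$. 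Substituting both into Gigli's identity $\bRic_\Omega(\nabla g,\nabla g)=\bm\Delta(\tfrac12|\nabla g|^2)-\|\H_g\|_{\rm HS}^2\,\mm-\g(\nabla g,\nabla\La g)\,\mm$ would produce exactly the asserted formula, first on $\mathcal A$ and then, by the previous step, in general.

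\emph{The $\cd$ characterization.} Being infinitesimally Hilbertian, the space is $\cd$ iff it is $\rcd$, hence by \cite{AGS-B,AGS-M} iff it is $\be$, hence by \cite{G-N} iff $\bRic_\Omega(\nabla g,\nabla g)\geq K|\nabla g|_\g^2\,\mm$ for all test functions $g$. If $\Ric_V\geq K$ and $II\geq0$, the formula above yields $\bRic_\Omega\geq K\mm$ and hence $\cd$. Conversely, assuming $\bRic_\Omega\geq K\mm$, the formula reads
\[
\Ric_V(\nabla g,\nabla g)\,e^{-V}\d\vol+II(\nabla g,\nabla g)\,e^{-V}\d\mathcal H^{n-1}\restr{\partial\Omega}\ \geq\ K\,|\nabla g|_\g^2\,e^{-V}\d\vol ,
\]
and since $\mathcal H^{n-1}\restr{\partial\Omega}$ is singular with respect to $\vol$, comparing the Lebesgue decompositions of the two sides forces $II(\nabla g,\nabla g)\geq0$ $\mathcal H^{n-1}$-a.e.\ on $\partial\Omega$ and $\Ric_V(\nabla g,\nabla g)\geq K|\nabla g|_\g^2$ $\vol$-a.e.\ on $\Omega$, for every $g\in\mathcal A$. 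Prescribing $\nabla g(p)$ to be an arbitrary tangent vector at an arbitrary $p\in\partial\Omega$ (resp.\ an arbitrary vector at an arbitrary interior point) while keeping $g\in\mathcal A$, and using the continuity of $\Ric_V$ and $II$, then upgrades these almost-everywhere bounds to the pointwise statements $II\geq0$ on $\partial\Omega$ and $\Ric_V\geq K$ on $\overline\Omega$.

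\emph{Main obstacle.} I anticipate the delicate points to be the rigorous version of the first step — verifying that $\d_\Omega$ genuinely induces the weighted Neumann Dirichlet form and that $\mathcal A$ is a dense enough core for $\bRic_\Omega$ to be determined by its values on $\mathcal A$ — together with the localization in the last step, namely producing, for every boundary point $p$ and every $v\in T_p\partial\Omega$, a function $g\in\mathcal A$ with $\nabla g(p)=v$, so that no tangential direction is lost when passing from the measure inequality to the convexity statement $II\geq0$.
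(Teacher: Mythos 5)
Your derivation of the measure-valued Ricci tensor formula is essentially the paper's own proof: both compute $\bm\Delta(\tfrac12|\nabla g|^2)$ via the weighted Green's formula on $\Omega$, apply the interior Bochner identity, and identify the boundary term $-\tfrac12\g({\rm N},\nabla|\nabla g|^2)$ with $II(\nabla g,\nabla g)$ by differentiating the Neumann constraint $\g({\rm N},\nabla g)=0$ tangentially. The only addition is your Lebesgue-decomposition and localization argument for the ``if and only if'' clause, which the paper states as Corollary~\ref{coro:rigid} without proof and which is the standard way to extract $\Ric_V\geq K$ and $II\geq 0$ from the measure inequality.
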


On the other side, from   \cite{AGS-C, AGS-M}
 we know that $(\overline \Omega, \d_\Omega, e^{-V} {\rm Vol}_{\rm g})$ is  $\cd$ if and only if the gradient estimate \eqref{eq2-intro} holds. It is proved by  F.-Y. Wang (c.f.  Chapter 3 in \cite{WFY-A}), that the  gradient estimate \eqref{eq2-intro} is equivalent to $\Ric \geq K$ and $II \geq 0$.   Thus we  completely answer the questions {\bf Q-1}, {\bf Q-2} and {\bf Q-3}.

\bigskip

In the discussions above, we have assumed  that $\partial \Omega$ is smooth  and there is no mass on the boundary. However,   none of these assumptions  is necessary or natural  in abstract metric measure setting. 
Precisely, for any   $\Omega$  and any  Borel measure $\mm$  with full support.      It is always meaningful to investigate  the  $K$-displacement convexity of   ${\rm Ent}(~ \cdot ~| \mm)$  on $L^2$-Wasserstein space $\mathcal {W}_2(\overline\Omega, \d_\Omega)$, 
which is   exactly  the approach used by Lott-Sturm-Villani to  define synthetic  lower Ricci curvature bound (i.e. $\cd$ conditions, c.f. \cite{Lott-Villani09, S-O1, S-O2}).

Let $(e_t)_{t\in [0,1]}$ denote the evaluation maps on a metric space $(X, \d)$: 
\[
  e_{t} : \geo( X, \d) \ni \gamma\mapsto \gamma_t \in  X.
\]
A measure $\Pi \in  \mathcal{P}(\geo( X, \d))$ is called an optimal dynamical plan if $(e_0,e_1)_{\sharp} \Pi$ is an optimal transportation plan; it easily follows that
$[0,1] \ni t \mapsto (e_t)_\sharp \Pi$ is a geodesic in $\mathcal {W}_2(X, \d)$. Conversely, it is known that any geodesic $(\mu_t)_{t \in [0,1]}$ in $\mathcal {W}_2(X, \d)$ can be lifted to an optimal dynamical plan $\Pi$ so that $(e_t)_\sharp \Pi  = \mu_t$ for all $t \in [0,1]$ (c.f. \cite[Theorem 2.10]{AG-U}). 

Now we  are ready to introduce the following   $\cdkn$ condition. 

\begin{definition} [Definition 1.3 \cite{S-O2}] Let $K \in \R$ and $N \in [1, \infty]$. We say that $(\overline \Omega, \d_\Omega,  \mm)$ is a $\cdkn$ space if  for {\bf any}  pair $\mu_0, \mu_1 \in \mathcal {P}_2 (\overline \Omega)$ with $\mu_0, \mu_1 \ll \mm$, there {\bf exists} a $L^2$-Wasserstein geodesic $(\mu_t)_{t\in [0,1]}$ connecting  $\mu_0, \mu_1$ such that 
\begin{equation}\label{eq3-intro}
{\rm S}_N(\mu_t | \mm) \leq -\int \Big [ \tau^{(t)}_{K,N} \big (\d_\Omega(\gamma_0, \gamma_1) \big )\rho^{-\frac 1N}_1(\gamma_1)+\tau^{(1-t)}_{K, N} \big (\d_\Omega(\gamma_0, \gamma_1) \big )\rho^{-\frac 1N}_0(\gamma_0) \Big ] \,\d \Pi(\gamma)
\end{equation}
for all $t\in [0,1]$ and some distortion coefficients $\tau^{(t)}_{K,N} $,  where $\Pi \in \mathcal{P}(\geo( \overline{\Omega}, \d_\Omega))$  is a lifting of $(\mu_t)$  satisfying  $(e_t)_\sharp \Pi=\mu_t$,  ${\rm S}_N(\mu_t| \mm)=-\int  \rho^{-\frac 1N}_t\,\d \mu_t$ and $\mu_t=\rho_t\,  \mm$.
\end{definition}
In addition,  the heat semi-group   on   $(\overline \Omega, \d_\Omega,  \mm)$ can be defined as the $L^2$-gradient flow of the energy form $\mathcal  E (f): W^{1,2}(\overline{\Omega}, \mm) \ni f \mapsto \int |\nabla f|^2\,\d\mm$.  So  gradient estimate \eqref{eq2-intro} is also well-posed for general $\Omega$ and $\mm$.

\bigskip

In \cite{AGS-M, AGS-B},  Ambrosio-Gigli-Savar\'e  prove that a  $\cd$  metric measure space whose Sobolev space $W^{1,2}$ is Hilbert (a notion named {\sl infinitesimally Hilbertianity} in the subsequent \cite{G-O}) is characterised by the validity of the gradient estimate \eqref{eq2-intro}.  

Therefore we have the following natural questions.

  \begin{itemize}
  \item [{\bf Q-4}]  If the boundary of $\Omega$  is not smooth or  $\mm\restr{\partial \Omega}\neq 0$,  such that $(\overline \Omega, \d_\Omega, \mm)$ is $\cd$,  can we say that $\Omega$ is geodesically convex as in Theorem \ref{theorem1}?
\item [{\bf Q-5}]  If  there exists a measure $\mm$ with full support such that $(\overline \Omega, \d_\Omega, \mm)$ is $\cd$ (or $\cdkn$). Can we assert $\mm \ll {{\vol}}_{\g}$?  Is  $(\overline \Omega, \d_\Omega, \mm)$ a  $\rcd$ space?
\end{itemize}

\bigskip

In Section \ref{section:rcd} we answer these  questions completely.  It should be noticed that Riemannian manifolds  with boundary are usually  non-smooth metric measure spaces, even if  the boundary is smooth. In the presence of an obstacle, the behavior of the  geodesics is quite involved.  For example,  the regularity of the  geodesics can not  be better than $C^1$ in general (c.f. \cite{AA-G} and \cite{ABB-R}).  In addition, in many  problems related to regularity,  (local)  convexity plays essential roles. Therefore it is difficult to solve this problem with classical analysis and (second order) PDE methods. 
However, using optimal transportation theory as an effective tool,   it will not be more difficult to study  non-smooth boundaries than smooth ones.

Firstly we show  the absolute continuity of the reference measure and the regularity of its density.  The following  theorem improves  the results proved by Cavalletti-Mondino \cite{CM-M} and Kell \cite{Kell-Transport} for essentially non-branching  ${\rm MCP}(K, N)$ spaces. To the knowledge of the author, this is the first measure rigidity result  without dimension bound.

\begin{theorem}[Measure rigidity:  absolute continuity and regularity, Proposition \ref{lemma:rcddensity}  \ref{prop:differentiable} \ref{prop:differentiableL1}]
Let $(M, {\rm g}, {\vol}_\g)$ be a complete $n$-dimensional Riemannian manifold without boundary,  $\d_\g$  be  the Riemannian distance  induced by ${\rm g}$ and $\mm$ be  a Borel measure with full support on $M$.    
Then we  have the following results.
\begin{itemize}
\item [1)]  Assume  $(M, \d_\g, \mm)$  satisfies the $\cd$ condition for some $K\in \R$. Then   there is a  locally Lipschitz semi-convex   functions $V$ on $M$  such that $\mm=e^{-V}{\rm Vol}_{\rm g}$.
\item [2)]  Assume  $(M, \d_\g, \mm)$  satisfies the $\mcpkn$ condition for some $K\in \R$ and  $N<\infty$.  Then $\mm=e^{-V}{\rm Vol}_{\rm g}$ for some locally bounded function $V$.
\end{itemize}

\end{theorem}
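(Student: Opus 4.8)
\emph{Overview.} Both assertions are local, so it suffices to prove them on a geodesic ball $B\subset M$ small enough that for every $p$ in a neighbourhood of $\overline B$ the map $\exp_p$ is a diffeomorphism onto $B$ and minimizing geodesics in $B$ are unique; the statement on $M$ then follows by a covering, since $\tfrac{\d\mm}{\d\vol}$ and its regularity are local. Two tools will be used. The first is a Brunn--Minkowski-type inequality produced by displacement convexity: applying $K$-displacement convexity of ${\rm Ent}(\cdot\,|\,\mm)$ to $\mu_i=\mm(A_i)^{-1}\mm\restr{A_i}$ and Jensen's inequality along the geodesic gives
\[
\mm(A_t)\ \ge\ e^{-K^{-}\Theta^{2}/2}\,\mm(A_0)^{1-t}\mm(A_1)^{t},\qquad K^{-}:=\max\{-K,0\},
\]
for all bounded Borel sets $A_0,A_1$ of positive finite $\mm$-measure, where $A_t$ is the set of $t$-points of minimizing geodesics from $A_0$ to $A_1$ and $\Theta={\rm diam}(A_0\cup A_1)$; under $\mcpkn$ one has instead the volume-distortion estimate $\mm(A_t(o))\ge\int_A\tau^{(t)}_{K,N}({\rm dist}_{\g}(o,x))^{N}\,\d\mm(x)$. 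The second tool is the needle decomposition of $L^1$-optimal transport: $(M,|\cdot|_{\g})$ is a complete non-branching (hence essentially non-branching) geodesic space, so for any $1$-Lipschitz $u$ one disintegrates $\mm=\int_Q\mm_\alpha\,\d\q(\alpha)$ along the transport rays $X_\alpha$ of $u$, and the $1$-dimensional localization theory for essentially non-branching $\cd$ (resp. $\mcpkn$) spaces gives $\mm_\alpha=h_\alpha\,\mathcal H^1\restr{X_\alpha}$ with $h_\alpha=e^{-W_\alpha}$ and $W_\alpha$ $K$-convex on the relative interior of $X_\alpha$ in the $\cd$ case (so $h_\alpha$ is continuous and locally bounded above and below there), and $h_\alpha$ bounded above and below on compact subsets of that interior in the $\mcpkn$ case.

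\emph{Step 1: $\mm\ll\vol$.} Apply the needle decomposition to $u={\rm dist}_{\g}(p,\cdot)$: the rays are the radial geodesics from $p$, the quotient sits inside $S_pM$, and in geodesic polar coordinates $\vol=J_p(r,\theta)\,\d r\,\d\theta$ with $J_p$ smooth and positive on $B\setminus\{p\}$. Since each conditional $\mm_\theta$ is $\mathcal H^1$-absolutely continuous on its ray, the $\vol$-singular part of $\mm$ is exactly the push-forward along the foliation of the $\d\theta$-singular part $\q_p^{\rm s}$ of the angular marginal; hence $\mm\ll\vol$ on $B$ is equivalent to $\q_p^{\rm s}=0$. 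To kill $\q_p^{\rm s}$ one combines the Brunn--Minkowski inequality with the differentiation theorem for Radon measures: if $\mm$ had a $\vol$-singular part there would be a point $x_0$ of infinite upper $\vol$-density for $\mm$ and scales $\eps_k\downarrow0$ with $\mm(B(x_0,\eps_k))\ge k\,\eps_k^{\,n}$, and, pairing $x_0$ with a companion point $x_1$ of finite positive $\vol$-density joined to $x_0$ by a unique minimizing geodesic and having midpoint $z$ of finite upper density, the two-point inequality applied to $B(x_0,\eps_k)$, $B(x_1,\eps_k)$ would force $C_z\,\eps_k^{\,n}\gtrsim\sqrt{k}\,\eps_k^{\,n}$, impossible as $k\to\infty$. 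The delicate situation is when $\mm$ is (almost) purely singular, where the companion points must be located using the measure-density of $\supp\mm=M$ together with the $\mathcal H^1$-absolute continuity of the needle conditionals; I expect this absolute continuity, being the first such statement without a dimension bound, to be one of the two main obstacles. In the $\mcpkn$ case the same conclusion follows from the volume-distortion estimate, which yields local Ahlfors-type bounds and lets one argue as in Cavalletti--Mondino \cite{CM-M} and Kell \cite{Kell-Transport}; then $\mm=e^{-V}\vol$ with $V\in L^\infty_{\rm loc}$ already proves 2).

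\emph{Step 2: directional regularity of $V$ in the $\cd$ case.} Write $V=-\ln\tfrac{\d\mm}{\d\vol}$. Since $\mm\ll\vol$ the angular marginal $\q_p$ is absolutely continuous, $\q_p=f_p(\theta)\,\d\theta$, and matching the two polar expressions of $\mm$ gives, for $\d\theta$-a.e. $\theta$ and a.e. $r$,
\[
V(\exp_p(r\theta))\ =\ W_\theta(r)+\ln J_p(r,\theta)-\ln f_p(\theta).
\]
By the localization theory $W_\theta(\cdot)$ is $K$-convex; by the Jacobi/Riccati equation and the local bound on $\Ric$, the smooth function $\ln J_p(\cdot,\theta)$ has second derivative bounded below on compact sets, uniformly in $\theta$. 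Hence $r\mapsto V(\exp_p(r\theta))$ is semiconvex with a locally uniform modulus: $V$ is semiconvex along $\d\theta$-a.e. radial geodesic issuing from each base point $p$.

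\emph{Step 3 and the main obstacle.} It remains to upgrade this to a genuinely semiconvex (hence locally Lipschitz) representative of $V$ on $B$: every geodesic segment lies in the interior of a radial geodesic from a suitable $p$, but a priori only $\d\theta$-a.e. such direction is controlled. The plan is first to produce a locally Lipschitz representative of $V$ from the differentiability theory for Kantorovich potentials of the $L^1$-transport problem, which is precisely what Propositions~\ref{prop:differentiable} and \ref{prop:differentiableL1} supply, and then to propagate the directional semiconvexity estimate from almost every direction to every direction at every point by continuity of the relevant second difference quotients, obtaining semiconvexity with a locally uniform constant; as a by-product $\{V=+\infty\}$ is $\vol$-null, so $\vol\ll\mm$ as well. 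An alternative, once $\mm\ll\vol$ is known, bypasses the needle decomposition in this last step and uses McCann's theorem (the $W_2$-geodesic between $\mm$-absolutely continuous measures being the canonical displacement interpolation) together with the Cordero-Erausquin--McCann--Schmuckenschl\"ager Jacobian inequality to read off the semiconvexity of $V$ along each geodesic; the passage from almost every to every geodesic is the same delicate point, and is, together with the absolute continuity of Step~1, the heart of the argument.
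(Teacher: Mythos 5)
Your proposal hinges, in both Step~1 and Steps~2--3, on the $L^1$-needle (localization) decomposition for the dimension-free $\cd$ case, and this is precisely what is not available: the theorems of Klartag \cite{KlartagNeedle} and Cavalletti--Mondino \cite{CM-ISO} you invoke are established only for essentially non-branching $\cdkn$ (and $\mcpkn$) with $N<\infty$. In particular, the key claim that the conditionals $\mm_\alpha$ along the radial rays of $u={\rm dist}_\g(p,\cdot)$ satisfy $\mm_\alpha\ll\mathcal H^1$, with $h_\alpha$ log-concave in the $\cd$ case, has no reference for $N=\infty$ and is in fact a statement of the same nature as the one being proved; you would be assuming a one-dimensional version of the absolute-continuity conclusion to establish the $n$-dimensional one. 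For the same reason, obtaining semiconvexity of $V$ from the ``$K$-convexity of $W_\theta$'' coming from localization (Step~2) is not a step you can take. You also flag, but do not resolve, the genuinely problematic case where $\mm$ is almost purely singular and one must locate companion points of finite density; that is a real gap, not just a delicate technicality. For item~2) ($\mcpkn$, $N<\infty$) your route through needle decomposition is legitimate and is essentially the Cavalletti--Mondino/Kell argument, so that part is fine modulo filling in the Ahlfors-type bookkeeping.

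The paper avoids localization entirely in the proof of Proposition~\ref{lemma:rcddensity}. For the $\cd$ case it argues by contradiction: if $\mm_{\rm ac}=0$ then the density quotient $\epsilon_r\to\infty$ almost everywhere; picking a base point $y_0$ and, via Fubini and Rauch/Toponogov comparison, roughly $r^{1-n}$ well-separated target points of high density on a sphere around $y_0$, it uses Rajala's intermediate-density bound \cite{Rajala12} to show each $W_2$-geodesic carries $\gtrsim r^n\min\epsilon_r$ of $\mm$-mass on $\gtrsim 1/r$ disjoint time-slices, producing $\sim r^{-n}$ disjoint sets of mass $\gtrsim cr^n$ inside a fixed ball, contradicting local finiteness of $\mm$ as $c\to\infty$. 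Once $\mm_{\rm ac}\neq0$, vanishing of $\mm_{\rm s}$ follows by Kell's singular-set argument (Lemma~6.4 in \cite{Kell-Transport}). For the semiconvexity of $V$ (Proposition~\ref{prop:differentiable}), rather than localization the paper uses that ${\rm Ent}(\cdot\mid\mm)={\rm Ent}(\cdot\mid\vol)+\int V\,\d(\cdot)$ together with displacement convexity of both entropies to deduce that $\mu\mapsto\int V\,\d\mu$ is (semi)convex along $W_2$-geodesics, then builds a fine cover with bounded eccentricity from supports of displacement interpolants, uses Lebesgue differentiation to get a representative $\bar V$, and propagates pointwise convexity via continuity and Fubini arguments. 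Your alternative suggestion at the end of Step~3 --- using McCann's theorem and the CMS Jacobian inequality to read off semiconvexity of $V$ along $W_2$-geodesics once $\mm\ll\vol$ is known --- is in fact closer in spirit to what the paper actually does, and that direction is the one you should pursue rather than the needle-based one.
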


In the next  theorem,  we prove  that there is no non-trivial measure other than  volume measure on a $n$-dimensional Riemannian manifold,  such that the corresponding  metric measure space satisfies ${\rm CD}(k, n)$.    On a weighted Riemannian manifold with $C^2$ weight, it  is known that  the Bakry-\'Emery condition ${\rm BE}(k, n)$ holds if and only if the weight is a constant.  However in general case this is still an open problem.
Recently De Philippis-Gigli \cite{DPG-Non}  propose two  definitions of non-collapsed metric measure space. We say that a $\cdkn$ space $(X, \d, \mm)$ is  weakly non-collapsed if $\mm \ll \mathcal{H}^N$, and we call it   (strongly) non-collapsed if $\mm =c\mathcal{H}^N$ for some constant $c$. In   case  $X$ is a  Riemannian manifold,  the following Theorem tells us that non-collapsing  and weakly non-collapsing are equivalent. We remark that this  result  is also obtained   by  Kapovitch-Ketterer in \cite{KK19}  and   Kapovitch-Kell-Ketterer in  \cite{KKK19}, as a by-product  in studying RCD condition on Alexandrov spaces.

\begin{theorem}[Measure Rigidity: non-collapsed spaces, Theorem \ref{prop:noncollap}]\label{theorem2}
Let $(M, {\rm g})$ be a $n$-dimensional  Riemannian manifold.   Assume there exists a   measure $\mm$ with full support such that $(M, \d_\g, \mm)$ is  ${\rm CD}(k, n)$.
Then $\mm=c{\vol}_\g$ for some  constant $c>0$. 
\end{theorem}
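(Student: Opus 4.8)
\section*{Proof proposal}

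The plan is to argue by contradiction after a reduction. Since ${\rm CD}(k,n)$ implies ${\rm CD}(k,\infty)$ (monotonicity of the curvature–dimension condition in the dimension parameter), the measure rigidity theorem on absolute continuity stated above applies and yields $\mm=e^{-V}\vol$ with $V$ locally Lipschitz; by Rademacher's theorem $V$ is differentiable at $\vol$-a.e.\ point. On the connected manifold $M$ it then suffices to show $\nabla V\equiv 0$, since a locally Lipschitz function with $\vol$-a.e.\ vanishing gradient on a connected manifold is constant, giving $\mm=c\vol$. So assume, for contradiction, that $x_0$ is a point of differentiability of $V$ with $\nabla V(x_0)\neq 0$; the aim is to exhibit a short optimal transport near $x_0$ that violates the ${\rm CD}(k,n)$ inequality \eqref{eq3-intro}.

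Fix normal coordinates centred at $x_0$, set $v:=-\nabla V(x_0)/|\nabla V(x_0)|$ and a constant $c_0\in(0,1)$. For small $\ell>0$ let $T_t$ be the displacement interpolation of the optimal map which, in these coordinates, is the gradient–exponential flow of the $c$-concave potential $p\mapsto \ell\la v,p\ra+\tfrac{c_0}{2}|p|^2$ (genuinely $c$-concave on a fixed neighbourhood of $x_0$ once $\ell,c_0$ are small), so that along the ``spine'' $\gamma(t):=T_t(x_0)$ the initial velocity is $\ell v$ and the initial shape operator is $c_0\,\mathrm{Id}$; in particular $\gamma$ has no conjugate points on $[0,1]$ since $c_0>0$. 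For $r$ small put $\mu_0:=\mm\restr{B_r(x_0)}/\mm(B_r(x_0))$ and $\mu_1:=(T_1)_\sharp\mu_0$. By McCann's theorem $T_1$ is the \emph{unique} optimal map from $\mu_0$ (which is $\ll\vol$) to $\mu_1$, hence $(T_t)$ is the unique $W_2$-geodesic between them and ${\rm CD}(k,n)$ forces \eqref{eq3-intro} for its (unique) lift $\Pi$. Along the spine the change-of-variables formula for $e^{-V}\vol$ gives
\[
\mathrm{Jac}_{\mm}T_t(x_0)=e^{-(V(\gamma_t)-V(x_0))}\,\mathrm{Jac}_{\vol}T_t(x_0),
\]
where, by the Jacobi-field/Riccati analysis of the geodesic flow with isotropic initial shape $c_0\,\mathrm{Id}$, one has $\mathrm{Jac}_{\vol}T_t(x_0)=(1+c_0t)^n\bigl(1+O(\ell^2)\bigr)$ uniformly in $t\in[0,1]$, while differentiability of $V$ at $x_0$ gives $V(\gamma_t)-V(x_0)=t\ell\,\la\nabla V(x_0),v\ra+o(\ell)=-t\ell\,|\nabla V(x_0)|+o(\ell)$.

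Now unwind \eqref{eq3-intro}. Writing ${\rm S}_n(\mu_t|\mm)=-\int\rho_t(\gamma_t)^{-1/n}\,\d\Pi$ and using $\rho_t(\gamma_t)=\rho_0(\gamma_0)/\mathrm{Jac}_\mm T_t(\gamma_0)$, inequality \eqref{eq3-intro} becomes the statement that the $\mu_0$-average of $g_t(\cdot)-\tau^{(1-t)}_{k,n}(\ell)-\tau^{(t)}_{k,n}(\ell)\,g_1(\cdot)$ is nonnegative, where $g_t(x):=(\mathrm{Jac}_\mm T_t(x))^{1/n}$ (so $g_0\equiv 1$); letting $r\to0$, all quantities being continuous near $x_0$, this localizes to $g_t(x_0)\ge \tau^{(1-t)}_{k,n}(\ell)+\tau^{(t)}_{k,n}(\ell)\,g_1(x_0)$ for $t\in[0,1]$. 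Substituting the expansions of the previous paragraph together with $\tau^{(s)}_{k,n}(\ell)=s+O(\ell^2)$, the terms of order $\ell^0$ cancel \emph{identically} — this is precisely the criticality $N=n$, where the function $z\mapsto z^{n/N}$ is affine and the underlying Jensen-type inequality degenerates to an equality — and one is left with
\[
g_t(x_0)-\tau^{(1-t)}_{k,n}(\ell)-\tau^{(t)}_{k,n}(\ell)\,g_1(x_0)=\frac{c_0\,\ell\,|\nabla V(x_0)|}{n}\,t(t-1)+o(\ell),
\]
which is strictly negative for $t\in(0,1)$ and $\ell$ small. This contradicts the displayed inequality, so $\nabla V(x_0)=0$. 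Since differentiability points of $V$ are of full $\vol$-measure, $\nabla V=0$ $\vol$-a.e., hence $V$ is constant and $\mm=c\vol$.

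The conceptual crux — and the reason the statement is not routine — is that ${\rm CD}$ is a priori a curvature (``second order'') condition whereas $\nabla V$ is ``first order'': the key point is that at the borderline dimension $N=n$ all the second-order slack in the distortion coefficients is consumed, so the gradient of the weight becomes the leading $O(\ell)$ obstruction — but only if one probes with a non-trivial transport shape ($c_0\neq0$), since the naive parallel-translation interpolation $c_0=0$ detects nothing. The technical points requiring care are: realizing the integral inequality \eqref{eq3-intro} as a pointwise statement at $x_0$ (handled by the collapse $r\to0$); the merely Lipschitz regularity of $V$ (handled by using only first-order information on $V$); and the uniqueness of $W_2$-geodesics on a Riemannian manifold, which is what allows ${\rm CD}(k,n)$ to be applied to the specific test transport we build.
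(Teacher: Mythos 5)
Your proposal is correct (granting the paper's preliminary regularity results, which you also invoke: $\mm=e^{-V}\vol$ with $V$ semi-convex and locally Lipschitz, hence $\vol$-a.e.\ differentiable), but the route is genuinely different from the paper's.

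The paper argues entirely at the level of the \emph{density exponent} $\mathcal{J}_\mm(x):=\lim_{r\to 0}\big(\mm(B_r(x))/r^n\big)^{1/n}$. Taking $A_1$ a point in the Brunn--Minkowski inequality \eqref{eq-BMI} (so really the measure-contraction consequence of ${\rm CD}(K,n)$) and combining with Rauch comparison gives a one-sided midpoint estimate
\[
\mathcal{J}_\mm(\gamma_{1/2}) \;\ge\; \frac{2}{1+C|xy|_\g^2}\,\tau^{(1/2)}_{K,n}\big(|xy|_\g\big)\,\mathcal{J}_\mm(x) \;=\; \big(1+O(|xy|_\g^2)\big)\,\mathcal{J}_\mm(x),
\]
with the crucial feature that the error is quadratic in the separation. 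Subdividing the geodesic into $N$ pieces and sending $N\to\infty$ kills these errors, giving $\mathcal{J}_\mm(y)\ge\mathcal{J}_\mm(x)$, hence by symmetry $\mathcal{J}_\mm$ is constant; applying this to $\mm^*$ and to $\vol$ and dividing shows $\varphi=(\d\mm^*/\d\vol)^{1/n}$ is constant. Your argument, by contrast, reduces to showing $\nabla V=0$ a.e.\ and probes ${\rm CD}(k,n)$ with an explicitly constructed short optimal transport whose $c$-concave potential $\ell\la v,p\ra+\tfrac{c_0}{2}|p|^2$ has its linear part aligned against $\nabla V(x_0)$ and a nondegenerate quadratic part $c_0\,\mathrm{Id}$ generating a genuine Jacobian expansion $(1+c_0 t)^n$; localizing the CD inequality at $x_0$ and Taylor-expanding in $\ell$, the zeroth-order terms cancel exactly because $N=n$, and the first-order term $\tfrac{c_0\ell|\nabla V(x_0)|}{n}\,t(t-1)$ is strictly negative for $t\in(0,1)$ unless $\nabla V(x_0)=0$. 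Both proofs ultimately exploit the same criticality — the expansion factor $(1+c_0 t)^n$ precisely saturating the exponent $1/n$ in the R\'enyi entropy, so that no second-order slack remains — but from complementary directions: the paper's version is measure-theoretic and needs no test transport or differentiation of $V$, at the cost of a slightly delicate iteration/subdivision step; your version is infinitesimal and makes the ``why $c_0\neq 0$ is needed'' and ``why $N=n$ is critical'' mechanisms completely explicit, at the cost of requiring Rademacher and the change-of-variables identity $\mathrm{Jac}_\mm T_t = e^{-(V\circ T_t-V)}\mathrm{Jac}_{\vol}T_t$ for a Lipschitz weight. I verified the algebra: the LHS minus RHS of the localized inequality is indeed $\frac{c_0\ell|\nabla V(x_0)|}{n}t(t-1)+o(\ell)$, and the uniqueness of the $W_2$-geodesic (McCann's theorem, $\mu_0\ll\vol$) correctly forces \eqref{eq3-intro} along your constructed interpolation.
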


In the last theorem, we study dimension-free $\cd$ condition on manifolds with possibly non-smooth boundary.    
We prove that the reference measure must be  supported on  a geodesically convex set, and we  answer  the question why there is no mass on the boundary. In particular,  we  fully understand the curvature-dimension condition on smooth metric measure space with boundary (c.f. Theorem \ref{th:ricci}). Thus this result enhance our understanding to  Cordero-Erausquin-McCann-Schmuckenschl\"ager's Riemannian interpolation inequality \cite{CMS01}.

In attacking this problem, we  assume {neither} infinitesimally Hilbertian nor non-branching property, which are often used in the study of related problems.  In particular,  we do not know a priori whether the $L^2$-Wasserstein geodesic is unique or not (i.e. the  Brenier-McCann's theorem).
In the proof,  we make full use of measure decomposition theory,  $L^1$-optimal transport theory and its connection with   $L^2$-optimal transport,  which are  developed by Klartag,  Cavalletti and Mondino (c.f. \cite{KlartagNeedle}, \cite{Cavalletti-D}, \cite{CM-ISO}).

\begin{theorem}[Measure rigidity: $\cd$ condition,     Theorem \ref{th:convex}]
Let $(M, {\rm g})$ be a complete  Riemannian manifold,  $\Omega \subset M$ be an open set with Lipschitz boundary.  Let  $\d_\Omega$ be the intrinsic distance induced by  Riemannian distance $\d_\g$ on $\overline{ \Omega}$, and $\mm$ be a reference measure with $\supp \mm=\overline \Omega$.  Assume that $\partial \Omega$ is $C^2$ out of a $\mathcal H^{n-1}$-negligible set, and  $(\overline{\Omega}, \d_\Omega, \mm)$  satisfies  $\cd$ condition for some $K\in \R$, then we have the following rigidity results.
\begin{itemize}
\item [1)]   $\overline \Omega$ is ${\rm g}$-geodesically convex,  that is,  any shortest path in  $(\overline { \Omega}, \d_\Omega)$ is a geodesic  segment  in $(M, {\rm g})$;
\item [2)]   $\mm \restr{\partial \Omega}=0$ and  $\mm=e^{-V}{\rm Vol}_{\rm g}$ for some  semi-convex, locally Lipschitz  function $V$;
\item [3)]  $(\overline{\Omega}, \d_\Omega, \mm)$  is a  $\rcd$ space.
\end{itemize}
In particular,   $(\overline{\Omega}, \d_\Omega, {\vol}_\g)$  is $\cd$  if and only if  $\overline \Omega$ is $\g$-geodesically convex and $\Ric \geq K$ on $\Omega$.
\end{theorem}

\bigskip

At last, we remark that most of the measure rigidity results obtained in this paper are still true on Alexandrov spaces with bounded curvature, which can be proved  in  similar ways.   Compared with previous results about  curvature-dimension conditions with finite dimension, we  use some new methods  to  deal with the  infinite dimensional  problem. We believe that these methods  have  potential  applications  on more general metric measure spaces.

\bigskip

\noindent \textbf{Acknowledgement}:  This research is part of a project which has received funding from the European Research Council (ERC) under the European Union's Horizon 2020 research and innovation programme (grant agreement No. 637851).  Part of Section \ref{section:ricci} had been finished when  the author  was supported by a postdoctoral  fellowship from Hausdorff Center for Mathematics (HCM)  in Bonn. The author wants to thank  E. Milman and M. Kell for helpful discussions and comments,  also  the anonymous referee for the  valuable report which improved readability.

\section{Smooth metric measure spaces with boundary}\label{section:ricci}
Let $\ms$ be an abstract metric measure space.
We say that $f\in L^2(X, \mm)$ is a Sobolev function in $W^{1,2}\ms$ if there exists a sequence of Lipschitz functions $(f_n)_n \subset L^2$,  such that $f_n \to f$ and $\lip{f_n} \to G$ in $L^2$ for some $G$, where $\lip{f_n}$ is the local Lipschitz constant of $f_n$ defined by
 \[
 \lip{f_n}(x):=\lmts{y}{x} \frac{|f_n(y)-f_n(x)|}{\d(y, x)} 
\]
(and $0$ if $x$ is an isolated point).   It is known that there exists a minimal function in $\mm$-a.e. sense, denoted it by $|\D f|$,  called minimal weak upper  gradient. 
If  $(X, \d)$ is a Riemannian manifold and $\mm$ is the volume measure, we know that $|\D f|=|\nabla f|=\lip{f}$  for any $f\in C^\infty$ (c.f. Theorem  6.1  \cite{C-D}).  Furthermore, let  $\Omega \subset X$ be an open set with  $\mm(\partial \Omega )=0$, by locality we have  $|\D f|_{\overline \Omega}=|\nabla f|$ $\mm$-a.e..

We equip $W^{1,2}\ms$ with the norm
\[
\|f\|_{W^{1,2}\ms}:= \sqrt{\|f\|^2_{L^2(X,\mm)}+\||\D f|\|^2_{L^2(X,\mm)}}.
\]
It is known that $W^{1,2}\ms$ is a Banach space, but not necessarily a Hilbert space. We say that $\ms$ is an {\bf infinitesimally Hilbertian space} if $W^{1,2}$ is a Hilbert space. Obviously, Riemannian manifolds equipped with volume measure are infinitesimally Hilbertian spaces.  In general, infinitesimal Hilbertianity is not  trivial even if the base space $X$  is a  manifold.

On an infinitesimally Hilbertian space, there a  pointwise bilinear map defined by
 \[
[W^{1,2}]^2 \ni (f, g) \mapsto \la \nabla f, \nabla  g \ra:= \frac14 \Big{(}|\D (f+g)|^2-|\D (f-g)|^2\Big{)}.
\]  It can be seen that $\la \cdot, \cdot \ra =\g(\cdot, \cdot)$ on a  Riemannian manifold $(M, \g)$.

Then we can define the measure-valued Laplacian via integration by part.

\begin{definition}
[Measure-valued Laplacian, \cite{G-O, G-N}]
The space ${\rm D}({\bf \Delta}) \subset  W ^{1,2}\ms$ is the space of $f \in  W ^{1,2} \ms$ such that there is a measure ${\bf \mu}$ satisfying
\[
\int h \,\d{\bf \mu}= -\int \la \nabla h, \nabla f \ra \, \d  \mm, ~~~\forall h ~ \text{Lipschitz with bounded support}.
\]
In this case the measure $\mu$ is unique and we shall denote it by ${\bf \Delta} f$. If ${\bf \Delta} f \ll m$, we denote its density by $\Delta f$.
\end{definition}

The following proposition links the curvature-dimension condition $\rcd$ and the non-smooth Bakry-\'Emery theory. We say that a space is  $\rcd$ if it is a  ${\rm CD}(K, \infty)$  space  defined by  Lott-Sturm-Villani in \cite{Lott-Villani09, S-O1, S-O2},  equipped with an infinitesimally Hilbertian Sobolev space. For more details, see \cite{AGS-M} (also  \cite{AGMR-R}).

We define  ${\rm TestF}\ms \subset W^{1,2}\ms$, the set of test functions  by
\[
{\rm TestF}\ms:= \Big\{f \in {\rm D} ({\bf \Delta}) \cap L^\infty: |\D f|\in L^\infty~~ {\rm and}~~~ \Delta f  \in W^{1,2}\ms\cap L^\infty(X, \mm) \Big \}.
\] It is known (c.f. \cite{S-S})  that ${\rm TestF}\ms$ is dense in $W^{1,2}\ms$ when $\ms$ is $\rcd$.

\bigskip

 Let $f,g \in {\rm TestF}\ms$. It is known from  \cite{S-S} that   measure ${\bf \Gamma}_2(f,g)$ is well-defined by
\[
{\bf \Gamma}_2(f,g)=\frac12 {\bf \Delta} \la \nabla f, \nabla g \ra -\frac12 \big{(}\la \nabla f, \nabla \Delta g \ra+\la \nabla g, \nabla \Delta f \ra\big{)}\, \mm,
\]
and we put ${\bf \Gamma}_2(f):={\bf \Gamma}_2(f,f)$. 
Then we have the following Bochner  inequality on metric measure spaces.

\begin{proposition} [Bakry-\'Emery condition, \cite{AGS-B, AGS-M}]\label{becondition}
Let $\ms$ be  a $\rcd$ space for some $K \in \R$. Then 
\[
{\bf \Gamma}_2(f) \geq  K |\D f|^2 \,\mm
\]
for any $f \in {\rm TestF}\ms$.
\end{proposition}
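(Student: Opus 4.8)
The plan is to deduce this weak Bochner inequality from the Bakry-\'Emery gradient estimate for the heat semigroup, which is available on $\rcd$ spaces. Recall first the input from the literature: on an $\rcd$ space the heat semigroup $(\mathrm H_t)_{t\ge 0}$, defined as the $L^2$-gradient flow of the Cheeger energy $f\mapsto \int |\D f|^2\,\d\mm$, satisfies the pointwise gradient estimate
\[
|\D \mathrm H_t f|^2 \le e^{-2Kt}\,\mathrm H_t\big(|\D f|^2\big)\qquad \mm\text{-a.e., for all } f\in W^{1,2}(M),\ t\ge 0.
\]
This is one of the central equivalences established in \cite{AGS-B, AGS-M}, obtained by coupling the $\mathrm{EVI}_K$-characterization of the heat flow as the Wasserstein gradient flow of the entropy with Kuwada-type duality; I take it as known.

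Next I fix $f\in {\rm TestF}(M)$ and a nonnegative $g\in {\rm TestF}(M)$, and introduce the auxiliary function
\[
\Phi(t):=\int g\,\Big(\mathrm H_t\big(|\D f|^2\big)-e^{2Kt}\,|\D \mathrm H_t f|^2\Big)\,\d\mm,\qquad t\ge 0.
\]
By the gradient estimate the integrand is $\mm$-a.e.\ nonnegative, so $\Phi(t)\ge 0$ for all $t\ge 0$, while $\Phi(0)=0$; hence the right derivative satisfies $\Phi'(0^+)\ge 0$ once we know it exists. The computation of $\Phi'(0^+)$ uses the regularization properties of $(\mathrm H_t)$ on $\rcd$ spaces, the identity $\tfrac{\d}{\d t}\mathrm H_t f=\Delta \mathrm H_t f$ in $L^2$, the Leibniz/chain rules for $\la\nabla\cdot,\nabla\cdot\ra$, and — crucially — the fact that for $f\in {\rm TestF}(M)$ one has $|\D f|^2=\la\nabla f,\nabla f\ra\in W^{1,2}(M)$ (c.f.\ \cite{S-S}), so that for $g\in {\rm TestF}(M)$
\[
\int g\,\d{\bf \Delta}|\D f|^2=-\int \la\nabla g,\nabla |\D f|^2\ra\,\d\mm=\int |\D f|^2\,\Delta g\,\d\mm .
\]
Carrying this out gives
\[
\Phi'(0^+)=\int |\D f|^2\,\Delta g\,\d\mm-2K\int g\,|\D f|^2\,\d\mm-2\int g\,\la\nabla f,\nabla\Delta f\ra\,\d\mm .
\]

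To conclude, I compare this with the definition of the measure ${\bf \Gamma}_2(f)=\tfrac12{\bf \Delta}\la\nabla f,\nabla f\ra-\la\nabla f,\nabla\Delta f\ra\,\mm$: using the same integration by parts for ${\bf \Delta}\la\nabla f,\nabla f\ra$,
\[
\int g\,\d{\bf \Gamma}_2(f)=\frac12\int |\D f|^2\,\Delta g\,\d\mm-\int g\,\la\nabla f,\nabla\Delta f\ra\,\d\mm=\frac12\,\Phi'(0^+)+K\int g\,|\D f|^2\,\d\mm .
\]
Thus $\Phi'(0^+)\ge 0$ yields $\int g\,\d{\bf \Gamma}_2(f)\ge K\int g\,|\D f|^2\,\d\mm$ for every nonnegative $g\in {\rm TestF}(M)$. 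Since on an $\rcd$ space there is an exhaustion by nonnegative test cutoff functions (with bounded Laplacian) increasing to $1$, and both sides define Radon measures, the pointwise inequality ${\bf \Gamma}_2(f)\ge K|\D f|^2\,\mm$ follows by monotone convergence.

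The main obstacle is the middle step: rigorously justifying the right-differentiability of $\Phi$ at $t=0$ and the identification of $\Phi'(0^+)$ with the stated expression. This requires the quantitative regularizing estimates for $(\mathrm H_t)$ on $\rcd$ spaces (e.g.\ $\mathrm H_t f\in {\rm D}(\Delta)$, $\Delta \mathrm H_t f\in W^{1,2}$, and continuity in $t$ of the relevant norms) together with the Leibniz and chain rules for the carr\'e du champ, all of which belong to the calculus developed in \cite{AGS-M, S-S, G-N}; once these are in place the computation is routine. (One may also bypass exact differentiability by working with $\liminf_{t\downarrow 0}\Phi(t)/t\ge 0$ and evaluating this limit directly.)
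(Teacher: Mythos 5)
Your proof is correct, and it follows the standard route from the cited literature: the paper itself states Proposition \ref{becondition} as a known result from \cite{AGS-B, AGS-M, EKS-O} without reproducing a proof, and the argument in those references is precisely your differentiation-at-$t=0$ of the Bakry-\'Emery gradient estimate $|\D \mathrm H_t f|^2\le e^{-2Kt}\mathrm H_t(|\D f|^2)$, tested against nonnegative $g\in{\rm TestF}(M)$ and combined with the integration-by-parts identity defining ${\bf\Gamma}_2$. The technical points you flag (the $W^{1,2}$-regularity of $|\D f|^2$ from \cite{S-S}, justifying $\Phi'(0^+)$ via the $L^2$-regularizing estimates for $\mathrm H_t$, and density of nonnegative test cutoffs) are indeed the substance that the cited works supply, so your sketch is both correct and faithful to the intended proof.
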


Let $f \in {\rm TestF}\ms$. The Hessian map  $\H_f: \big \{ \nabla g: g \in {\rm TestF}\ms \big \}^2 \mapsto L^0\ms$ is defined  by
\[
2\H_f(\nabla g,\nabla h)=\la \nabla g, \nabla \la \nabla f, \nabla h \ra \ra +\la \nabla h, \nabla \la \nabla f, \nabla g \ra \ra-\la \nabla f, \nabla \la \nabla g, \nabla h \ra \ra
\]
 for any $g, h \in {\rm TestF}\ms$. t can be proved (see Theorem 3.4  \cite{S-S}  and Theorem 3.3.8 \cite{G-N}) that  $\H_f(\cdot, \cdot)$ can be extended to a symmetric $L^\infty\ms$-bilinear map on the $L^2$-tangent module of $\ms$,  and is continuous with values in $L^0\ms$. On Riemannian manifolds,  $\H_f$ coincides with  ${\mathrm D}^2  f$,  which is the  Hessian (tensor) in classical sense.
 
Furthermore,  we have the following proposition,  due to Gigli \cite{G-N}.
 
 \begin{proposition}[Theorem 3.6.7 \cite{G-N}] \label{prop-ricci}
Let $M$ be a $\rcd$ space. Then  
\[
\bRic(\nabla f, \nabla f) \geq K|\D f|^2\,\mm
\]
for any $f\in {\rm TestF}\ms$,
where the measure-valued Ricci tensor $\bRic$ is defined by
\begin{eqnarray*}
\bRic(\nabla f, \nabla f):={\bf \Gamma}_2(f) - \|\H_f\|^2_{\rm HS} \, \mm.
\end{eqnarray*}
\end{proposition}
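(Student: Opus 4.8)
The statement to be proved is Proposition~\ref{prop-ricci}: on an $\rcd$ space $M$, for every $f \in {\rm TestF}(M)$ one has
\[
\bRic(\nabla f, \nabla f) := {\bf \Gamma}_2(f) - \|\H_f\|^2_{\rm HS}\,\mm \;\geq\; K|\D f|^2\,\mm.
\]
The plan is to reduce this to the Bakry-\'Emery inequality of Proposition~\ref{becondition} together with the non-smooth Bochner identity with Hessian term, namely the fact (the non-smooth analogue of \eqref{bf}) that for $f \in {\rm TestF}(M)$,
\[
{\bf \Gamma}_2(f) \;\geq\; \big(\|\H_f\|_{\rm HS}^2 + \tfrac1N (\Delta f)^2\big)\,\mm \;+\; \bRic(\nabla f,\nabla f)
\]
in a suitable sense — but since here we want only the dimension-free ($N=\infty$) version, the cleaner route is to \emph{define} $\bRic$ by the displayed formula and simply verify the inequality. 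Since $\bRic(\nabla f,\nabla f) + \|\H_f\|^2_{\rm HS}\,\mm = {\bf \Gamma}_2(f)$ by definition, the claim $\bRic(\nabla f,\nabla f)\geq K|\D f|^2\mm$ is \emph{equivalent} to ${\bf \Gamma}_2(f) \geq K|\D f|^2\mm + \|\H_f\|_{\rm HS}^2\,\mm$, which is a strengthening of Proposition~\ref{becondition} by the nonnegative term $\|\H_f\|_{\rm HS}^2\,\mm$. So the real content is the improved Bochner inequality carrying the full Hilbert-Schmidt Hessian term.

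First I would recall the self-improvement machinery of Savar\'e \cite{S-S} and Gigli \cite{G-N}: on an $\rcd$ space, starting from the weak Bochner inequality ${\bf \Gamma}_2(f) \geq K|\D f|^2\mm$ for test functions, one can run the ``iterated'' or ``quasi-linearization'' argument — plugging in functions of the form $\varphi(f_1,\dots,f_k)$ for smooth $\varphi$ and applying the chain and Leibniz rules for $\la\nabla\cdot,\nabla\cdot\ra$ and for ${\bf\Delta}$ — to extract, after optimizing over $\varphi$, the extra term $\|\H_f\|_{\rm HS}^2$. This is exactly Theorem~3.4 of \cite{S-S} (definition and closability of $\H_f$) combined with the argument yielding Theorem~3.6.7 of \cite{G-N}. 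Concretely the steps are: (i) fix $f \in {\rm TestF}(M)$ and a finite collection $g_1,\dots,g_k \in {\rm TestF}(M)$; (ii) apply Proposition~\ref{becondition} to $\varphi(g_1,\dots,g_k)$ and expand both sides using the calculus rules, isolating on the right the quadratic form $\sum_{i,j}\H_f(\nabla g_i,\nabla g_j)\cdot(\text{coefficients from }\varphi)$; (iii) choose the second-order data of $\varphi$ at the relevant point so as to make this quadratic form reproduce $\|\H_f\|_{\rm HS}^2$ pointwise $\mm$-a.e., using the $L^\infty$-bilinearity of $\H_f$ on $L^2(TM)$ and a measurable-selection argument to handle the pointwise optimization; (iv) pass to the supremum/limit to conclude ${\bf\Gamma}_2(f) \geq (K|\D f|^2 + \|\H_f\|_{\rm HS}^2)\,\mm$ as measures, which is the desired inequality after rearranging.

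The main obstacle — and the only genuinely non-routine point — is step (iii): the pointwise optimization over the Hessian test data must be performed \emph{measurably and simultaneously} across all of $M$, and one must know that $\{\nabla g : g \in {\rm TestF}(M)\}$ generates $L^2(TM)$ densely enough that the supremum of $\sum \H_f(\nabla g_i,\nabla g_i)$ over admissible configurations actually equals $\|\H_f\|_{\rm HS}^2$ rather than something smaller. This is precisely where the density of ${\rm TestF}(M)$ in $W^{1,2}(M)$ (stated in the excerpt, from \cite{S-S}) and the extension of $\H_f$ to a symmetric $L^\infty(M)$-bilinear continuous map on $L^2(TM)$ (Theorem~3.4 of \cite{S-S}, Theorem~3.3.8 of \cite{G-N}) enter decisively; with those tools the optimization reduces to the finite-dimensional linear-algebra fact that for a symmetric matrix $A$, $\sup\{\tr(A S) : S \succeq 0,\ \|S\|\le 1 \text{ appropriately normalized}\}$ recovers $\|A\|_{\rm HS}^2$ after the correct change of variables. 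Everything else — the chain and Leibniz rules for ${\bf\Delta}$ and $\la\nabla\cdot,\nabla\cdot\ra$ on ${\rm TestF}(M)$, the measure-theoretic manipulations, passage to the limit — is standard $\rcd$-calculus and I would cite \cite{S-S, G-N, EKS-O} rather than reproduce it. Since the paper's Proposition~\ref{prop-ricci} is explicitly attributed to Theorem~3.6.7 of \cite{G-N}, the proof here is simply to invoke that self-improvement together with Proposition~\ref{becondition}, noting that the defining identity ${\bf\Gamma}_2(f) = \bRic(\nabla f,\nabla f) + \|\H_f\|^2_{\rm HS}\,\mm$ converts the improved Bochner inequality into the stated lower bound on $\bRic$.
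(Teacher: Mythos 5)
Your proposal is correct and follows essentially the same route as the paper, which offers no internal proof of Proposition~\ref{prop-ricci} but simply quotes it as Theorem~3.6.7 of \cite{G-N}: you reduce the claim, via the defining identity $\bRic(\nabla f,\nabla f)={\bf \Gamma}_2(f)-\|\H_f\|^2_{\rm HS}\,\mm$, to the self-improved Bochner inequality and then invoke the Savar\'e--Gigli machinery \cite{S-S, G-N}, exactly as the paper does by citation. The only quibble is cosmetic: in your step (iii) the duality reads $\|A\|_{\rm HS}=\sup\{\tr(AB):\|B\|_{\rm HS}\le 1\}$ (no positivity constraint, and the square arises only after the Young-type absorption), but since that step is part of the cited self-improvement argument rather than new content, it does not affect the validity of your proof.
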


\bigskip

Now we introduce our first theorem.

\begin{theorem}[Measure-valued Ricci tensor]\label{th:ricci}
Let $(M, {\rm g}, e^{-V}{\rm Vol}_{\rm g})$ be a $n$-dimensional weighted  Riemannian manifold and $\Omega \subset M$ be a connected open set  with $(n-1)$-dimensional smooth  orientable  boundary. Then  the measure valued Ricci tensor on  $(\overline \Omega, \d_\Omega, e^{-V}{\rm Vol}_{\rm g})$ is given by
\begin{equation}
\bRic_\Omega(\nabla g, \nabla g)=\Ric_V(\nabla g, \nabla g)\,e^{-V}\d {\rm Vol}_{\rm g} \restr{\Omega}+II(\nabla g, \nabla g) \,e^{-V}\d \mathcal{H}^{n-1}\restr{\partial \Omega}
\end{equation}
for any  $g\in C_c^\infty$ with $ {\rm g} ({\rm N}, \nabla g)=0$, where ${\rm N}$ is the outward normal vector field on $\partial \Omega$, and $\Ric_V$ is the Bakry-\'Emery's generalized  Ricci tensor.
\end{theorem}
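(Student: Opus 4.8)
The plan is to read off, directly on the smooth weighted manifold with smooth boundary, the three metric--measure objects entering Gigli's definition of the Ricci tensor --- the first order calculus $\la\nabla\cdot,\nabla\cdot\ra$, the measure valued Laplacian ${\bf\Delta}$, and the Hessian $\H_\cdot$ --- and then to combine them through $\bRic_\Omega(\nabla g,\nabla g)={\bf\Gamma}_2(g)-\|\H_g\|^2_{\rm HS}\,\mm$. For the first order part: since $\mm=e^{-V}\vol$ assigns no mass to the $\mathcal{H}^{n-1}$-finite hypersurface $\partial\Omega$, and since $\d_\Omega$ agrees with $|\cdot|_\g$ in a neighbourhood of every interior point, the discussion opening Section~\ref{section:ricci} (Cheeger's identification from \cite{C-D}, together with locality) gives that $(\overline\Omega,\d_\Omega,\mm)$ is infinitesimally Hilbertian with $|\D g|=|\nabla g|_\g$ and $\la\nabla f,\nabla g\ra=\g(\nabla f,\nabla g)$ holding $\mm$-a.e.\ for smooth $f,g$, and that $\H_g$ coincides with the Riemannian Hessian $\D^2g$. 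None of these objects needs a global $\rcd$ hypothesis here: on a smooth weighted manifold with smooth boundary they are all given by explicit local formulas, and the content of the theorem is to identify them.

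Next I would compute the Laplacian. For $\phi\in C_c^\infty(M)$, the weighted Gauss--Green formula on $\Omega$,
\[
\int_\Omega\la\nabla h,\nabla\phi\ra\,\d\mm=-\int_\Omega h\,L\phi\,\d\mm+\int_{\partial\Omega}h\,\g({\rm N},\nabla\phi)\,e^{-V}\,\d\mathcal{H}^{n-1},\qquad L:=\Delta-\nabla V,
\]
valid first for $h\in C_c^\infty(M)$ and then, by density, for every Lipschitz $h$ with bounded support, shows that $\phi\in{\rm D}({\bf\Delta})$ with ${\bf\Delta}\phi=L\phi\,\mm-\g({\rm N},\nabla\phi)\,e^{-V}\,\mathcal{H}^{n-1}\restr{\partial\Omega}$. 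In particular, if $g$ satisfies the Neumann condition $\g({\rm N},\nabla g)=0$ on $\partial\Omega$, then ${\bf\Delta}g=Lg\,\mm$ is absolutely continuous with smooth compactly supported density, so $g\in{\rm TestF}$ and ${\bf\Delta}g$ carries no boundary mass. Hence, in ${\bf\Gamma}_2(g)=\frac12{\bf\Delta}\la\nabla g,\nabla g\ra-\la\nabla g,\nabla\Delta g\ra\,\mm$, the boundary contribution comes entirely from ${\bf\Delta}|\nabla g|^2$ --- and $|\nabla g|^2$ is crucially \emph{not} Neumann even when $g$ is.

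The key step is then a classical boundary identity. On $\partial\Omega$ the vector $\nabla g$ is tangent to the boundary, so differentiating $\g({\rm N},\nabla g)\equiv 0$ along $\partial\Omega$ in the tangential direction $\nabla g$ gives $\H_g({\rm N},\nabla g)=-II(\nabla g,\nabla g)$, with $II(X,Y)=\g(\nabla_X{\rm N},Y)$; therefore $\g({\rm N},\nabla|\nabla g|^2)=2\H_g({\rm N},\nabla g)=-2\,II(\nabla g,\nabla g)$, and the singular part of ${\bf\Delta}|\nabla g|^2$ equals $2\,II(\nabla g,\nabla g)\,e^{-V}\mathcal{H}^{n-1}\restr{\partial\Omega}$. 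Plugging this and ${\bf\Delta}g=Lg\,\mm$ into the formula for ${\bf\Gamma}_2(g)$, the singular part of ${\bf\Gamma}_2(g)$ is $II(\nabla g,\nabla g)\,e^{-V}\mathcal{H}^{n-1}\restr{\partial\Omega}$, while the absolutely continuous part is the classical weighted $\Gamma_2$, which by the generalized Bochner formula \eqref{bf} equals $\big(\Ric_V(\nabla g,\nabla g)+\|\H_g\|^2_{\rm HS}\big)\mm$. Subtracting $\|\H_g\|^2_{\rm HS}\mm$ --- carried by $\Omega$ since $\mm(\partial\Omega)=0$ --- yields exactly $\bRic_\Omega(\nabla g,\nabla g)=\Ric_V(\nabla g,\nabla g)\,e^{-V}\d\vol\restr\Omega+II(\nabla g,\nabla g)\,e^{-V}\d\mathcal{H}^{n-1}\restr{\partial\Omega}$.

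The genuinely delicate points are (i) justifying the Gauss--Green identity inside the metric--measure \emph{definition} of ${\bf\Delta}$ (the admissible $h$ are only Lipschitz with bounded support, so one argues by density from $C_c^\infty(M)$) and (ii) the normal-derivative identity of the previous paragraph, which is where the second fundamental form literally appears; the remaining ingredients are the classical pointwise Bochner formula on the weighted manifold and the bookkeeping of the mutually singular interior and boundary parts. Once the formula is in hand, combining it with Proposition~\ref{prop-ricci} gives ${\bf\Gamma}_2(g)\ge K|\D g|^2\mm$, hence $\bRic_\Omega\ge K$, on Neumann test functions precisely when $\Ric_V\ge K$ on $\Omega$ and $II\ge 0$ on $\partial\Omega$, which is the content of the companion corollary.
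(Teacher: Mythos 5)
Your proposal is correct and follows essentially the same route as the paper: weighted Gauss--Green to identify ${\bf\Delta}_\Omega$ with the bulk term $\Delta_V g\,\mm$ plus the boundary term $-\g({\rm N},\nabla g)\,e^{-V}\mathcal{H}^{n-1}\restr{\partial\Omega}$, then applying this to $|\nabla g|^2$, using the pointwise weighted Bochner formula on $\Omega$, and converting $-\tfrac12\,\g({\rm N},\nabla|\nabla g|^2)$ into $II(\nabla g,\nabla g)$ by tangential differentiation of the Neumann condition. The only cosmetic difference is that you phrase the boundary identity as $\H_g({\rm N},\nabla g)=-II(\nabla g,\nabla g)$ (and separately record that the metric--measure calculus on $\overline\Omega$ coincides with the smooth one without needing an $\rcd$ hypothesis), whereas the paper writes the same identity via $II(\nabla g,\nabla g)=\g(\nabla\g({\rm N},\nabla g),\nabla g)-\tfrac12\g({\rm N},\nabla|\nabla g|^2)$; the substance is identical.
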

\begin{proof}
From integration by part formula (Green's formula) on a Riemannian manifold, we know
\[
\int {\rm g}( \nabla f, \nabla g )\,e^{-V}\d {\rm Vol}_{\rm g}=-\int f \Delta_V g\,e^{-V}\d {\rm Vol}_{\rm g}+\int_{\partial \Omega} f{\rm g}({\rm N}, \nabla g)\,e^{-V}\d \mathcal{H}^{n-1}\restr{\partial \Omega}
\]
for any $f, g \in C_c^\infty$, where $\Delta_V:=(\Delta-\nabla V) $ and  $\mathcal{H}^{n-1}\restr{\partial \Omega}$ is the $(n-1)$-dimensional Hausdorff measure on $\partial \Omega$.  

Hence  $g \in {\rm D}({\bf \Delta}_\Omega) $ and  the measure-valued Laplacian is given  by  the following formula
\[
{\bf \Delta}_\Omega g=\Delta_V g\,e^{-V}{\rm Vol}_{\rm g} \restr{\Omega}- {\rm g}( {\rm N}, \nabla g) \,e^{-V}\mathcal{H}^{n-1}\restr{\partial \Omega}.
\] 
 Therefore for any $g \in C_c^\infty$ with ${\rm g}(  {\rm N}, \nabla g)=0$ on $\partial \Omega$, we have $g \in {\rm TestF}(\Omega)$.

Now we can compute the measure-valued Ricci tensor. Let  $g \in C_c^\infty$ be a test function  with  ${\rm g}({\rm  N}, \nabla g)=0$ on $\partial \Omega$. We have
\begin{eqnarray*}
\bRic_\Omega(\nabla g, \nabla g)&=&\frac12 {\bf \Delta}_\Omega |\D g|_\Omega^2-\la \nabla g, \nabla \Delta_\Omega g\ra_\Omega \,e^{-V}{\rm Vol}_{\rm g}-\|\Hess_g\|^2_{\rm HS}\,e^{-V}{\rm Vol}_{\rm g}\\
&=&\frac12 { \Delta}_V |\nabla g|^2\,e^{-V}{\rm Vol}_{\rm g}-{\rm g}(\nabla g, \nabla \Delta_V g) \,e^{-V}{\rm Vol}_{\rm g}-\|\Hess_g\|^2_{\rm HS}\,e^{-V}{\rm Vol}_{\rm g}\\ 
&& -\frac12{\rm g}( {\rm N}, \nabla |\nabla g|^2) \,e^{-V} \mathcal{H}^{n-1}\restr{\partial \Omega}\\
&=&  \Ric(\nabla g, \nabla g)\,e^{-V} {\rm Vol}_{\rm g}+\H_V(\nabla g, \nabla g)\,e^{-V}{\rm Vol}_{\rm g}\\
&&~~~~~~~~~~~~~~~-\frac12{\rm g}( {\rm N}, \nabla |\nabla g|^2) \,e^{-V} \mathcal{H}^{n-1}\restr{\partial \Omega}\\
&=& \Ric_V(\nabla g, \nabla g)\,e^{-V} {\rm Vol}_{\rm g}-\frac12{\rm g}( {\rm N}, \nabla |\nabla g|^2) \,e^{-V} \mathcal{H}^{n-1}\restr{\partial \Omega},
\end{eqnarray*}
where we use Bochner's  formula at the third equality,  and $\Ric_V=\Ric+\H_V$ is  Bakry-\'Emery's generalized  Ricci tensor on weighted Riemannian manifold with weight $e^{-V}$.

By definition of second fundamental form, we have
\[
II(\nabla g, \nabla g)={\rm g}( \nabla_{\nabla g} {\rm N}, \nabla g)={\rm g}\big ( \nabla {\rm g}({\rm N}, \nabla g ), \nabla g \big )-\frac12 {\rm g}( {\rm N}, \nabla |\nabla g|^2) .
\]
Recall that  ${\rm g}( N, \nabla g)=0$ on $\partial \Omega$, we have  ${\rm g}( \nabla_{\nabla g} {\rm N}, \nabla g)=-\frac12 {\rm g}( {\rm N}, \nabla |\nabla g|^2)$.

Finally, we obtain
\begin{equation}
\bRic_\Omega(\nabla g, \nabla g)=\Ric_V(\nabla g, \nabla g)\, {\rm Vol}_{\rm g}+II (\nabla g, \nabla g)e^{-V} \, \mathcal{H}_{n-1}\restr{\partial \Omega}
\end{equation}
for any  $g\in C_c^\infty$ with ${\rm g}(N, \nabla g)=0$.
\end{proof}

\bigskip
At the last, we list some simple  applications of Theorem \ref{th:ricci} whose proof  can be found in  \cite{H-RRM}, see also Theorem 1.2.1 and Theorem 3.3.2 in \cite{WFY-A} for more details.

\begin{corollary}[Rigidity: convexity of the boundary ]\label{coro:rigid}
Let   $(\overline \Omega, \d_\Omega, e^{-V}{\rm Vol}_{\rm g})$ be a space as stated  in Theorem \ref{th:ricci}. Then it  is $\rcd$   if and only if $\partial \Omega$ is convex and $\Ric_V \geq K$ on $\Omega$. 
\end{corollary}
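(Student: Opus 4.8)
The plan is to read both implications off Theorem~\ref{th:ricci} combined with Gigli's Proposition~\ref{prop-ricci}, together with the equivalences recalled in the introduction; I would present the ``only if'' part as the substantive computation and dispatch the ``if'' part via the known chain $\cd\iff$ gradient estimate $\iff$ ($\Ric_V\ge K$ and $II\ge 0$).

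For the ``only if'' direction, suppose $(\overline\Omega,\d_\Omega,e^{-V}\vol)$ is $\rcd$. By Proposition~\ref{prop-ricci}, $\bRic_\Omega(\nabla g,\nabla g)\ge K|\D g|^2\,\mm$ for every $g\in{\rm TestF}(\overline\Omega)$, and the proof of Theorem~\ref{th:ricci} shows that every $g\in C_c^\infty(M)$ with $\g({\rm N},\nabla g)=0$ on $\partial\Omega$ is such a test function. Plugging the explicit formula of Theorem~\ref{th:ricci} into this inequality and using that $\mm=e^{-V}\vol$ charges no mass on the smooth hypersurface $\partial\Omega$, the measures $e^{-V}\vol\restr{\Omega}$ and $e^{-V}\mathcal H^{n-1}\restr{\partial\Omega}$ are mutually singular, so the inequality splits: its absolutely continuous part gives $\Ric_V(\nabla g,\nabla g)\ge K|\nabla g|^2$ $\vol$-a.e.\ on $\Omega$, and its singular part gives $II(\nabla g,\nabla g)\ge 0$ $\mathcal H^{n-1}$-a.e.\ on $\partial\Omega$; since $g$ is smooth both inequalities hold pointwise. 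Letting $g$ vary and realizing at an interior $x\in\Omega$ an arbitrary $v\in T_xM$ as $\nabla g(x)$ (a cutoff times a linear coordinate function, supported away from $\partial\Omega$) yields $\Ric_V\ge K$ on $\Omega$; realizing at $x\in\partial\Omega$ an arbitrary $v\in T_x\partial\Omega$ as $\nabla g(x)$ (a function built in Fermi/boundary-normal coordinates that is independent of the normal coordinate near $\partial\Omega$, hence automatically satisfies $\g({\rm N},\nabla g)=0$ there) yields $II\ge 0$, i.e.\ $\partial\Omega$ is convex.

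For the ``if'' direction, assume $\Ric_V\ge K$ on $\Omega$ and $II\ge 0$ on $\partial\Omega$. Two routes are available. The direct one reverses the computation: these hypotheses make $\bRic_\Omega(\nabla g,\nabla g)\ge K|\D g|^2\,\mm$ hold for all $g\in C_c^\infty$ with $\g({\rm N},\nabla g)=0$, which one then extends to all of ${\rm TestF}(\overline\Omega)$ by a density/stability argument and feeds into the characterization of $\rcd$ through the measure-valued Bochner inequality (\cite{AGS-B, AGS-M}, \cite{G-N}). The quicker one uses the chain recalled above: $(\overline\Omega,\d_\Omega,e^{-V}\vol)$ is infinitesimally Hilbertian (its Cheeger energy is the quadratic form $f\mapsto\int|\nabla f|^2\,\d\mm$, by locality and $\mm(\partial\Omega)=0$), so $\cd$ coincides with $\rcd$ here; by \cite{AGS-C, AGS-M} the $\cd$ property is equivalent to the gradient estimate \eqref{eq2-intro} for the Neumann heat semigroup, and by F.-Y.~Wang \cite{WFY-A} (for the diffusion operator $\Delta-\nabla V$) that estimate is equivalent to $\Ric_V\ge K$ together with $II\ge 0$.

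I expect the ``if'' direction to be the main obstacle: the ``only if'' part is an essentially algebraic consequence of Theorem~\ref{th:ricci} plus an elementary localization of test functions, whereas the converse genuinely relies on the global equivalence between synthetic $\cd$ and the smooth curvature/convexity data --- either via the stability step needed to pass from compactly supported Neumann functions to general test functions, or via Wang's gradient estimate on manifolds with boundary, which is itself nontrivial. A recurring technical point, used in both directions, is the tubular-neighborhood construction of smooth compactly supported functions with a prescribed tangential gradient that satisfy the Neumann condition along the entire boundary; it is routine but should be written out.
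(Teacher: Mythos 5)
The paper does not actually print a proof of this corollary (it says the applications of Theorem~\ref{th:ricci} are listed ``without proof''), but the approach the paper signals in the introduction is exactly the chain you invoke: infinitesimal Hilbertianity is immediate since $\mm=e^{-V}\vol\restr{\Omega}$ and $\partial\Omega$ is $\mm$-null, so $\cd=\rcd$; by \cite{AGS-C,AGS-M} this is equivalent to the gradient estimate \eqref{eq2-intro}; and by Wang's result \cite{WFY-A} that estimate is equivalent to $\Ric_V\ge K$ together with $II\ge 0$, i.e.\ convexity of $\partial\Omega$. Your ``only if'' argument via Proposition~\ref{prop-ricci} plus the explicit formula of Theorem~\ref{th:ricci}, splitting the measure inequality along the mutually singular parts $e^{-V}\vol\restr{\Omega}$ and $e^{-V}\mathcal H^{n-1}\restr{\partial\Omega}$ (noting the right-hand side $K|\D g|^2\mm$ has no singular part), and the Fermi-coordinate construction to realize arbitrary tangent vectors, is correct and is the natural way to present the corollary as an \emph{application} of Theorem~\ref{th:ricci}; and your honest flagging of the density/stability step needed for the ``direct'' converse is the right instinct --- that step is genuinely nontrivial, which is presumably why the paper routes the converse through Wang's theorem rather than through an extension of the Bochner inequality from Neumann test functions to all of ${\rm TestF}$. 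The only small remark worth adding is that, once Wang's equivalence is in hand, \emph{both} implications follow from the same chain, so the proof via Theorem~\ref{th:ricci} is the ``conceptual'' route the paper advertises rather than the logically shorter one.
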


The next  result tells us  that the boundary does not influence the dimension bound of the  smooth metric measure space.
\begin{corollary}
A  $n$-dimensional  Riemannian manifold with  boundary is $\rcd$ if and only if it is ${\rm RCD}(K, n)$.
\end{corollary}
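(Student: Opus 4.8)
The plan is to reduce the statement, via Theorem~\ref{th:ricci} and the Bakry--\'Emery characterization of ${\rm RCD}(K,N)$, to the elementary Cauchy--Schwarz inequality $\|\Hess_g\|_{\rm HS}^2\ge\frac1n(\tr\Hess_g)^2$. The nontrivial implication is that a $\rcd$ Riemannian manifold with boundary is already ${\rm RCD}(K,n)$; the converse is immediate because ${\rm CD}(K,n)$ is stronger than ${\rm CD}(K,\infty)$ in Sturm's scale and infinitesimal Hilbertianity is the same condition in both cases.

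So I would assume $(\overline\Omega,\d_\Omega,\vol)$ is $\rcd$, i.e.\ apply the discussion above with $V=0$. First I would invoke Corollary~\ref{coro:rigid} to get that $\partial\Omega$ is convex (i.e.\ $II\ge0$) and $\Ric\ge K$ on $\Omega$; in particular $\vol(\partial\Omega)=0$, so by locality (as recalled at the start of Section~\ref{section:ricci}) $|\D g|=|\nabla g|$ $\vol$-a.e.\ and $\vol\restr\Omega$ coincides with the reference measure. Next I would rerun the computation in the proof of Theorem~\ref{th:ricci} for a Neumann test function $g\in C_c^\infty$ with $\g({\rm N},\nabla g)=0$ on $\partial\Omega$, but now \emph{keep} the Hilbert--Schmidt term instead of absorbing it into $\bRic_\Omega$, obtaining
\[
{\bf\Gamma}_2(g)=\big(\Ric(\nabla g,\nabla g)+\|\Hess_g\|_{\rm HS}^2\big)\,\vol\restr\Omega+II(\nabla g,\nabla g)\,\mathcal H^{n-1}\restr{\partial\Omega}.
\]
Using $\Ric\ge K$, $\|\Hess_g\|_{\rm HS}^2\ge\frac1n(\Delta g)^2$ and $II\ge0$ then gives the dimensional Bochner inequality
\[
{\bf\Gamma}_2(g)\ \ge\ K|\D g|^2\,\vol+\tfrac1n(\Delta g)^2\,\vol
\]
on the test algebra, which is exactly the ${\rm BE}(K,n)$ condition. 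Since the test functions are dense in $W^{1,2}$ and the space is already $\rcd$ (so the Sobolev-to-Lipschitz property and all structural tools of \cite{S-S,G-N} are in force), the Erbar--Kuwada--Sturm characterization \cite{EKS-O} (see also \cite{AGS-M}) upgrades this to ${\rm RCD}(K,n)$.

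I expect the only delicate point to be the passage from the Bochner inequality, tested against the Neumann class $\{g\in C_c^\infty:\g({\rm N},\nabla g)=0\}$, to the abstract ${\rm BE}(K,n)$/${\rm RCD}(K,n)$ statement: one must check that this class is rich enough (it is, being precisely the class that already produced the $N=\infty$ conclusion in Theorem~\ref{th:ricci} and Corollary~\ref{coro:rigid}) and that the dimensional term is preserved under the mollification/self-improvement arguments underlying \cite{EKS-O}. An alternative route, bypassing the Bochner machinery, is to verify Sturm's ${\rm CD}(K,n)$ directly from the Cordero-Erausquin--McCann--Schmuckenschl\"ager interpolation inequality \cite{CMS01} on $\Omega$ --- applicable since $\Ric\ge K$ there and, by convexity of $\partial\Omega$, the relevant $L^2$-optimal geodesics remain in $\overline\Omega$ --- and then combine with infinitesimal Hilbertianity; in that approach the subtlety shifts to controlling $\d_\Omega$-geodesics that touch the boundary, which is again handled by convexity.
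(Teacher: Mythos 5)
The paper states this corollary without proof (``we list some simple applications of Theorem~\ref{th:ricci} without proof''), so there is no proof to compare against. Your argument is the expected one: reduce to $V=0$, apply Corollary~\ref{coro:rigid} to get $\Ric\ge K$ and $II\ge 0$, rerun the computation from Theorem~\ref{th:ricci} while retaining the $\|\Hess_g\|_{\rm HS}^2$ term, and use $\|\Hess_g\|_{\rm HS}^2\ge\frac1n(\Delta g)^2$ to land on ${\rm BE}(K,n)$ for the Neumann test class, then invoke the Bochner-to-$\rcdkn$ equivalence. This is correct and consistent with how the paper argues the $N=\infty$ case in Theorem~\ref{theorem1}. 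Two small points worth making explicit. First, the restriction to $V=0$ is not merely a simplifying choice but forced: for a weighted $n$-manifold the Cauchy--Schwarz slack $\|\Hess_g\|_{\rm HS}^2-\frac1n(\Delta g)^2$ cannot compensate the cross-term coming from $(\La g)^2=(\Delta g-\langle\nabla V,\nabla g\rangle)^2$ unless $\nabla V\equiv 0$, so the corollary is false for nonconstant weights and is implicitly about the volume measure. Second, passing from the Bochner inequality on $\{g\in C_c^\infty:\g({\rm N},\nabla g)=0\}$ to the full ${\rm BE}(K,n)$ condition requires that this class be dense in the relevant operator domain (the Neumann form core); this is exactly the same density claim the paper already uses implicitly for Corollary~\ref{coro:rigid}, so you inherit rather than create the gap, but it deserves a reference to the self-improvement framework of \cite{S-S} or \cite{G-N}. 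Your alternative synthetic route via \cite{CMS01} together with convexity of $\partial\Omega$ also works and avoids the Bochner machinery entirely.
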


The last corollary characterizes metric measure spaces with upper Ricci  curvature bound, see also \cite{ES-Rigidity} for a  rigidity result concerning a different notion of upper Ricci bound.

\begin{corollary}[c.f. \cite{Oneill} p.104]
 If $\bRic_\Omega \ll {\vol}_\g$,   then $\partial \Omega$ is totally geodesic, i.e. $II\equiv 0$.
\end{corollary}

\section{Main results: measure rigidity theorems}\label{section:rcd}

At first, we  recall some fundamental properties of spaces satisfying Lott-Sturm-Villani's  synthetic lower Ricci curvature bound (including $\cd$, $\cdkn$ and $\rcd$ spaces).

\begin{itemize}
\item [1)] (Uniform density bound of intermediate measures on  $\cd$ spaces,   Lemma 3.1 \cite{Rajala12}.)
Let $\mu_0, \mu_1 \in \mathcal{P}(X)$  be  a pair of probability measures with bounded density   and so that $W_2(\mu_0, \mu_1) < \infty$. Suppose also that ${\rm diam}(\supp \mu_0 \cup  \supp \mu_1) < \infty$.
Then there exists a $L^2$-Wasserstein geodesic  $(\mu_t)$ connecting $\mu_0$ and $\mu_1$ such that  the densities $\{\frac{\d \mu_t}{\d \mm}\}_t$ are  uniformly bounded.

\item [2)] (Generalized  Brunn–Minkowski inequality on  $\cdkn$ spaces,   Proposition 2.1 \cite{S-O2}.)
Given $K, N\in \R$, with $N \geq 1$, we set for $(t, \theta) \in [0, 1] \times \R^+$,
\[
 \tau^{(t)}_{K, N}   \big(\theta):=\left\{\begin{array}{llll}
\infty &\text{if}~~ K\theta^2 \geq (N-1)\pi^2,\\
t^{\frac 1N} \Big (\frac{\sin(t\theta \sqrt{K/(N-1)})}{\sin(\theta \sqrt{K/(N-1)})}    \Big )^{1-\frac1N} &\text{if}~~ 0<K\theta^2<(N-1)\pi^2,\\
t  &\text{if}~~K\theta^2=0,\\
t^{\frac 1N} \Big (\frac{\sinh(t\theta \sqrt{-K/(N-1)})}{\sinh(\theta \sqrt{-K/(N-1)})}    \Big )^{1-\frac1N} &\text{if} ~~ K\theta^2<0.
\end{array}
\right.
\]

Then for any measurable sets  $A_0, A_1 \subset X$ with $\mm(A_0)+\mm(A_1)>0$,  $t\in [0,1]$ and
$N'\geq N$, we have
\begin{equation}\label{eq-BMI}
\mm \big (A_t \big)^{\frac 1{N'}}   \geq \tau^{(1-t)}_{K, N'}   \big(\Theta)\mm(A_0 \big)^\frac1{N'} +\tau^{(t)}_{K, N'}   \big(\Theta \big)\mm \big(A_1\big)^\frac1{N'},
\end{equation}
where $A_t$  denotes the set of points which divide geodesics starting in $A_0$ and ending in $A_1$ with ratio $\frac t{1-t}$ and where $\Theta$ denotes the minimal ($K\geq 0$) or maximal ($K<0$)  length of such geodesics.  In particular, when $A_0$ is a single point, we have the following $(K, N)$-measure contraction property  (or MCP$(K, N)$ condition  for short):
\begin{equation}\label{eq-MCP}
\mm \big (A_t \big)  \geq \Big [ \tau^{(t)}_{K, N}  \big(\Theta \big)\Big ]^{N}\mm \big(A_1\big).
\end{equation}

\item [3)] (Riemannian-Curvature-Dimension condtion, i.e. ${\rm RCD}$ condition,  see \cite{AGS-M}) We say that a space is $\rcd$ (or $\rcdkn$)  if it is an infinitesimally Hilbertian $\cd$ (or $\cdkn$ respectively) space. It is known that Riemannian manifolds with lower Ricci curvature bound, Ricci limit spaces,  and Alexandrov spaces with lower curvature bound are ${\rm RCD}$ spaces.

\cite[Proposition 3.1, Part a)]{Han-CDonRM}

 Let $\ms$ be a  $\rcd$ space.  For any  $L^2$-Wasserstein geodesic $(\mu_t)$  connecting $\mu_0, \mu_1 \ll \mm$,   there exists  $\Pi \in    \mathcal{P}(\geo(X,\d))$  such that  $(e_t)_\sharp \Pi=\mu_t$ (c.f.  Theorem 2.10 \cite{AG-U}).  By   essentially non-branching property of $\rcd$ spaces (c.f. \cite{RS-N}),   $\Pi$ is  concentrated on a set of non-branching geodesics,  where we say that a set $\Gamma \subset  \geo(X,\d)$  is non-branching  if  for any $\gamma^1, \gamma^2 \in \Gamma$, it holds:
\[
\exists t \in (0,1) ~~\text{s.t.}~ ~\forall  s\in [0, t]~ \gamma_t^1 =\gamma_t^2 \Rightarrow  \gamma_s^1 =\gamma_s^2, ~ \forall s\in [0,1].
\]
Furthermore,   there exits a unique  $L^2$-Wasserstein geodesic  connecting $\mu_0, \mu_1$ (c.f. \cite{RS-N}) which is induced by an optimal transport map. 
\end{itemize}

\subsection{Measure rigidity:  absolute continuity}

Let $(X, \d, \mm_1)$ and $(X, \d, \mm_2)$ be two metric measure spaces satisfying essentially non-branching MCP$(K, N)$  condition.  For  $N<\infty$,  Cavalletti and Mondino  (c.f.  Proposition 8.1 and Corollary 8.2 in \cite{CM-M})  prove the mutual absolute continuity of  the reference measures $\mm_1, \mm_2$ (see  \cite{Kell-Transport} for a different proof given by Kell).  In case  $X$  is a Riemannian manifold, we extend such result to  dimension-free $\cd$ condition  and  prove  a quantitative  density estimate under $\cdkn$ condition.

\begin{proposition}[Measure rigidity:  absolute continuity]\label{lemma:rcddensity}
Let $(M, {\rm g}, {\vol}_\g)$ be a complete $n$-dimensional Riemannian manifold,    $\d_\g$ be the Riemannian distance  induced by ${\rm g}$ and $\mm$ be a Borel measure with full support on $M$. 
Then we  have the following results.
\begin{itemize}
\item [a)]  Assume that  $(M, \d_\g, \mm)$  is  $\cd$  for some $K\in \R$. Then $\mm \ll {\vol}_\g$.
\item [b)]  Assume that  $(M, \d_\g, \mm)$  is  $\mcpkn$ for some $K\in \R$,  $N<\infty$. Then   we have
\[
 \frac {\mm \big (B_r(x) \big)}{{\rm Vol}_{\rm g} \big(B_r(x) \big)} \in L^\infty_\loc~~~~\text{uniformly~in}~r>0.
 \]
In particular,   $\mm= e^{-V} \,{\vol}_\g$ for some $e^{-V} \in L^\infty_\loc$.
\item [c)] Assume that  $(M, \d_\g, \mm)$  is $\mcpkn$ for some $K\in \R$, $N<\infty$.   Then   we have
 \[
  \frac {r^{N-n} {\vol}_\g \big(B_r(x) \big)}{\mm \big(B_r(x) \big)} \in L^\infty_\loc ~~~~\text{uniformly~in}~r>0.
 \]
In particular, when $N=n$, we know $\mm= e^{-V} \,{\vol}_\g$  for some
${V} \in L^\infty_\loc$.
\end{itemize}
\end{proposition}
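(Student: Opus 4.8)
The plan is to obtain the finite-dimensional statements 2) and 3) from Bishop--Gromov-type volume comparison, and the dimension-free statement 1) from Rajala's density bound (property 1) of the recap) combined with the Monge structure of optimal transport on a Riemannian manifold.

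\emph{Parts 2) and 3).} From the measure contraction property \eqref{eq-MCP}, which follows from $\mcpkn$, contract a small ball $B_R(x)$ onto its centre: for $R$ below the injectivity radius at $x$ the $t$-intermediate set of the geodesics joining $x$ to $B_R(x)$ is exactly $B_{tR}(x)$, so $\mm(B_{tR}(x))\ge [\tau^{(t)}_{K,N}(\Theta)]^N\mm(B_R(x))\gtrsim t^N\mm(B_R(x))$, i.e. $\mm(B_r(x))\gtrsim r^N$ locally uniformly in $x$ and uniformly in small $r$. Conversely, the Bishop--Gromov monotonicity of $r\mapsto\mm(B_r(x))/\mathfrak v_{K,N}(r)$ implied by $\mcpkn$ gives $\mm(B_r(x))\lesssim r^N$ in the same sense. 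Since $M$ is a smooth Riemannian manifold $\vol(B_r(x))\asymp r^n$ locally uniformly for small $r$, and $N\ge n$ because an $\mcpkn$ space has Hausdorff dimension $\le N$. Dividing yields the two displayed ratio bounds; the Lebesgue--Besicovitch differentiation theorem on $(M,|\cdot|_\g,\vol)$ then identifies $\mm=e^{-V}\vol$ with $e^{-V}=\lim_{r\to0}\mm(B_r(x))/\vol(B_r(x))\in L^\infty_\loc$ by 2), and when $N=n$ the estimate from 3) forces $e^{-V}$ to be bounded below as well, i.e. $V\in L^\infty_\loc$.

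\emph{Part 1).} It suffices to show $\mm$ has no $\vol$-singular part. Write $\mm=f\vol+\mm_s$ with $\mm_s\perp\vol$, suppose $\mm_s\ne0$, and fix a bounded Borel set $N$ with $\vol(N)=0$ and $\mm(N)=\mm_s(N)\in(0,\infty)$. In the main case $\mm$ is not purely singular, so $\vol(\{f\ge c_1\})>0$ for some $c_1>0$; pick a bounded set $A\subseteq\{c_1\le f\le c_2\}$ with $\overline A\cap N=\emptyset$ and put $\mu_0:=\vol(A)^{-1}\vol|_A$ and $\mu_1:=\mm(N)^{-1}\mm|_N$. Both have bounded support and bounded density with respect to $\mm$ (for $\mu_0$ this uses $f\ge c_1$ on $A$), so Rajala's bound \cite{Rajala12} provides a $W_2$-geodesic $(\mu_t)$ with $\mu_t=\rho_t\mm$ and $\sup_t\|\rho_t\|_\infty\le C<\infty$. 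Because $\mu_0\ll\vol$, McCann's theorem gives a unique optimal plan from $\mu_0$ to $\mu_1$, induced by a map $T=\exp(-\nabla\varphi)$ with $\varphi$ locally semiconcave; hence the $W_2$-geodesic is unique and equals $t\mapsto(\exp(-t\nabla\varphi))_\sharp\mu_0$, so the Rajala geodesic is this one, and by the Cordero-Erausquin--McCann--Schmuckenschl\"ager interpolation theory \cite{CMS01} one has $\mu_t=g_t\vol\ll\vol$ for every $t\in[0,1)$. Comparing $g_t\vol=\mu_t\le C\mm=Cf\vol+C\mm_s$ off the $\vol$-null carrier of $\mm_s$ gives $g_t\le Cf$ $\vol$-a.e., uniformly in $t<1$; letting $t\to1$ and using lower semicontinuity of $U\mapsto\mu(U)$ on open sets under weak convergence yields $\mu_1\le Cf\vol\ll\vol$, contradicting that $\mu_1$ lives on the $\vol$-null set $N$. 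Therefore $\mm_s=0$.

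The main obstacle is concentrated in 1): one must know that the abstractly produced $W_2$-geodesic of Rajala's lemma coincides with the Monge geodesic on a general complete Riemannian manifold and that its interpolants are $\vol$-absolutely continuous for $t<1$ --- this is exactly the CMS package that the paper takes as its starting point --- and one must separately dispose of the degenerate case in which $\mm$ is entirely $\vol$-singular, where no admissible $\mu_0\ll\vol$ exists (e.g.\ by a localization/needle decomposition along transport rays, or by approximation). Parts 2)--3) are comparatively routine once the Euclidean-model ball asymptotics on $M$ are in hand.
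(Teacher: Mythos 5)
Your handling of Parts 2) and 3) takes a genuinely simpler route than the paper's. Instead of the packing argument (fitting $\sim r^{-n}$ disjoint Rajala-geodesic supports, each of $\mm$-mass $\gtrsim r^n\epsilon_r$, into a fixed ball), you contract $B_R(x)$ onto its centre via MCP to get $\mm(B_r(x))\gtrsim r^N$, invoke the Bishop--Gromov monotonicity implied by $\mcpkn$ to get $\mm(B_r(x))\lesssim r^N$, and divide by the Riemannian asymptotics $\vol(B_r(x))\asymp r^n$; both bounds are locally uniform, $N\geq n$ is forced, and Besicovitch differentiation then gives the stated conclusions. This is correct, and the derivation of the two ratio bounds is cleaner than the paper's; the paper seems to reuse the packing mechanism in Parts 2)--3) mainly so that one and the same device powers all three parts, while you exploit the finite $N$ directly.

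Part 1) has a genuine gap. Your argument in the ``main case'' ($\mm_{\rm ac}\neq 0$) is sound and close in spirit to the paper's: choose $\mu_0\ll\vol$ with bounded $\mm$-density, apply Rajala's intermediate-density bound, observe that McCann's theorem makes the Rajala geodesic the Monge/CMS interpolation with $\mu_t\ll\vol$ for $t<1$, compare densities off the singular carrier, and pass to the limit by weak lower semicontinuity (the paper reaches the same contradiction through Kell's support-approximation lemma, but the two routes are essentially equivalent). What you do not supply is a proof that $\mm_{\rm ac}\neq 0$. In the dimension-free $\cd$ setting there is no MCP and no Bishop--Gromov, so the device you use in Parts 2)--3) is unavailable, and your suggested substitutes do not close the hole: Klartag / Cavalletti--Mondino localization requires an essentially non-branching $\mcpkn$ hypothesis with $N<\infty$, and no approximation scheme is actually described. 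This is not a peripheral corner case --- it is where almost the entire novelty of the paper's Part 1) lies: a quantitative packing argument shows that if $\mm$ were purely singular, Besicovitch differentiation would force $\liminf_{r\to0}\epsilon_r=\infty$ $\mm$-a.e.\ with $\epsilon_r(x)=\mm(B_r(x))/\vol(B_r(x))$, whereas placing $\sim r^{-n}$ pairwise-disjoint Rajala supports of $\mm$-mass $\gtrsim r^n\epsilon_r$ inside a fixed ball bounds $\epsilon_r$ locally. Until an argument of this strength is supplied, your Part 1) is incomplete.
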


\begin{proof}
\bigskip
{\bf Part  a):} 

Let $U \subset M$  be a  bounded and  geodesically convex open set. By  local finiteness of $\mm$ (c.f.  Theorem 4.24 \cite{S-O1}) we know $\mm(U)<\infty$.   By definition, $(\overline{U}, \d_\g, \mm\restr{U})$  is still a $\cd$ space.  

Let $\mm:= \mm_{\rm ac}+\mm_{\rm s}$ be the Lebesgue decomposition of $\mm$ with respect to the Riemannian volume measure $\vol_\g$. We will prove $\mm \ll \vol_\g$ in the following two steps. Firstly, we will show  $\mm_{\rm ac} \neq 0$. Then we will  prove $\mm_{\rm s}=0$.
\bigskip

{\bf Step 1}:  $\mm_{\rm ac} \neq 0$  on $U$.

Given  a parameter $r >0$,  we  define $\epsilon_r: U \mapsto \R^+$ by
\begin{equation}\label{eq1-lemma:rcddensity}
\epsilon_r(x):=\frac {\mm \big(B_{r}(x) \big)}{{\rm Vol}_{\rm g} \big(B_{r}(x) \big)}.
\end{equation}
Assume by contradiction that $\mm=\mm_{\rm s}$,  then for any  constant  $c>0$, there must be
\begin{equation}\label{eq: density0}
\mm \Big (\big \{ x\in U: \lmti{r}{0}\epsilon_r(x) \leq  c  \big \} \Big ) =   0, 
\end{equation}
otherwise we can prove  $\mm_{\rm ac}\neq 0$ by a standard measure-theoretic argument (using covering theorems). For any $(c, r)\in (0, +\infty) \times (0, 1)$, we  define the set $S(c, r) \subset U$ by
\[
S(c, r):= \Big \{x\in U:  \epsilon_s (x) \geq c ~~\text{for all}~~s\leq r \Big \}.
\]
From definition,  we have the following monotonicity 
\begin{equation}\label{eq1.1-lemma:rcddensity}
S(c, r_2) \subset S(c, r_1) ~~\text{for all}\text ~~ r_1 < r_2.
\end{equation}
It can also be seen that $$\Big \{ x\in U: \lmti{r}{0}\epsilon_r(x) >c \Big \} \subset \cup_{0<r<1} S(c, r) =\lmt{r}{0} S(c, r).$$   So by \eqref{eq: density0},  we obtain
\begin{equation}\label{eq1.2-lemma:rcddensity}
\mm \big(\lmt{r}{0} S(c, r)  \big)=\lmt{r}{0}\mm \big(S(c, r)  \big)=\mm(U).
\end{equation}

%Furthermore, since  the  function $\chi_{\{\epsilon_r <   c \} }$ is bounded from above by 
%1  and $U$ is  bounded,   we get the following estimate using reverse Fatou's lemma (or apply Fatou's lemma with the function $1-\chi_{\{\epsilon_r <   c \} }$. 
%\begin{eqnarray*}
%\lmts {r}{0}  \mm \big (\{ x: \epsilon_r(x) <   c \} \big ) & = & \lmts {r}{0}  \int  \chi_{\{\epsilon_r <   c \} }\,\d\mm \\
%&\leq &  \int   \lmts {r}{0}  \chi_{\{\epsilon_r <   c \} }\,\d\mm  \\
%&\leq &  \int   \chi_{\{ \lmti {r}{0}  \epsilon_r \leq c \} }\,\d\mm \\
%&=& \mm\big(\{ \lmti {r}{0}  \epsilon_r \leq  c \}\big).
%\end{eqnarray*}

%Combining with \eqref{eq: density0}, we obtain
%\begin{equation}\label{eq1.5-lemma:rcddensity}
%\lmts {r}{0}  \mm \big (\{ x: \epsilon_r(x) <   c \} \big )  =   0,
%\end{equation}
%and 
%\begin{equation}\label{eq1.51-lemma:rcddensity}
%\lmti {r}{0}  \frac{\mm \big (\{ \epsilon_r \geq    c \} \big ) }{\mm(U)} =   1.
%\end{equation}
Combining \eqref{eq1.1-lemma:rcddensity},  \eqref{eq1.2-lemma:rcddensity}, and the assumption $\supp \mm =X$, we get 
\begin{equation}\label{eq1.52-lemma:rcddensity}
\lmt {r}{0}  \vol_\g \big (\overline{S(c, r)}  \big )  =  \vol_\g (U).
\end{equation}

Next we will deduce  contradiction from \eqref{eq1.52-lemma:rcddensity} and $\cd$ condition.

Fix a point $y_0 \in U$ with  $\lmt{r}{0}\epsilon_r(y_0) = \infty$. By Rauch's  (and Toponogov's) comparison theorem, there exists a small $R>0$  and constants $c_1, c_2>0$  such that 
\begin{equation}\label{eq1.6-lemma:rcddensity}
c_1 t \d_\g(y, z) <  \d_\g(\gamma^{xz}_t,  \gamma^{xy}_t) < c_2 t \d_\g(y, z)~~\forall t\in (0,1],~~\forall x, y, z \in B_{3R}(y_0), 
\end{equation}
and
 \begin{equation}\label{eq2.5-lemma:rcddensity}
 \d_\g(\gamma^{xz}_s,  \gamma^{xy}_t) >  c_1 s\land t \d_\g(y,z)  ~~~\forall s,t \in (0,1)
\end{equation}
for any  $x, y, z \in B_{3R}(y_0)$ with $\d_\g(x, y)=\d_\g(x, z)$, where $\gamma^{xz}$ denotes the geodesic from $x$ to $z$ and $\gamma^{xy}$ denotes the geodesic from $x$ to $y$.
Moreover,    the following comparison principle holds  for any geodesic $\gamma^1, \gamma^2$ with endpoints in $ B_{3R}(y_0) \subset U$:
\begin{equation}\label{eq2-lemma:rcddensity}
  \d_\g(\gamma^1_t, \gamma^2_t) <  c_2 \max  \Big \{ \d_\g(\gamma^1_0, \gamma^2_0), \d_\g(\gamma^1_1, \gamma^2_1) \Big \}~~~~\forall t \in (0,1).
\end{equation}

Let  $y \in  B_{3R}(y_0) \setminus \overline{B_{2R}(y_0)}$.   Consider the $L^2$-Wasserstein  geodesic $(\mu^{r, y}_t)_t$  from  $\mu^{r, y}_0:= \frac1{\mm\big(B_r(y_0)\big)} \mm \restr{B_r(y_0)}$ to $\mu^{r, y}_1:= \frac1{\mm\big(B_{r}(y)\big)} \mm \restr{B_{r}(y)}$.  By density bound  of intermediate measures on $\cd$ space (c.f. Lemma 3.1  \cite{Rajala12} ),  we get the following (uniform) estimate
\begin{equation}\label{eq3-lemma:rcddensity}
\mm\big(\supp \mu^{r, y}_t\big) \gtrsim \min \Big \{ \mm\big(B_r(y_0)\big), \mm\big(B_{r}(y)\big)  \Big \} ~~~~~\forall t\in [0, 1],
\end{equation}
where we adopt the notation $A \lesssim B$ if there is a constant $C>0$ such that $A <C B$.
Combining \eqref{eq3-lemma:rcddensity} and  the fact ${\vol}_\g(B_r) \gtrsim r^n $, we get
\begin{equation}\label{eq4-lemma:rcddensity}
\mm\big(\supp \mu^{r, y}_t\big) \gtrsim  r^n  \min \Big \{ \epsilon_r(y_0), \epsilon_r(y)\Big \}  ~~~~~\forall t\in [0, 1].
\end{equation}

Let  $T_t$  be the optimal transport map which induces  $(\mu^{r, y}_t)_t$.   By \eqref{eq2-lemma:rcddensity} we know 
\[
\d_\g(T_t(x),  \gamma^{y_0y}_t) \leq c_2  \max  \Big \{ \d_\g(x,  y_0), \d_\g(T_1(x), y) \Big \}\leq  c_2 r
\]
for any $x \in B_r(y_0)$, where $\gamma^{y_0y}$ is the geodesic from $y_0$ to $y$. Therefore
\begin{equation}\label{eq:epsilonnei}
\mu^{r, y}_t \big(B_{c_2 r}(\gamma^{y_0y}_t ) \big )=1 ~~~~~\forall t\in [0, 1].
\end{equation}
In particular,  $\cup_t \supp \mu^{r, y}_t  \subset \overline{(\gamma^{y_0y})_{c_2r}}$, where $  (\gamma^ {y_0y}_t)_{c_2r}$ is the $c_2 r$-neighbourhood of $\gamma^{y_0y}$ w.r.t.  Hausdorff distance. 

For any $c>0$, by \eqref{eq1.52-lemma:rcddensity},  there exists a small $r=r(c) \ll R$ such that 
\begin{equation}\label{eq1.75-lemma:rcddensity}
\frac{ \vol_\g \big (\overline{\{\epsilon_r \geq    c \} }  \cap   B_{3R}(y_0) \setminus {B_{2R}(y_0)}\big )}{ \vol_\g \big (   B_{3R}(y_0) \setminus {B_{2R}(y_0)}\big )} >\frac12.
\end{equation}

Consider the  projection map  ${\rm Prj}:  B_{3R}(y_0) \mapsto \partial B_{2R}(y_0)$  along the radius,  defined by   
$${\rm Prj}(x):=\exp_{y_0}\left (\frac{2R \exp^{-1}_{y_0}(x)}{|\exp^{-1}_{y_0}(x)|} \right).$$
 It is known that 
\begin{equation}\label{eq1.76-lemma:rcddensity}
 \vol_\g \Big (   B_{3R}(y_0) \setminus {B_{2R}(y_0)}\Big )=\int_{2R}^{3R} \mathcal H^{n-1} \big ( \partial B_t(y_0) \big )\,\d t.
\end{equation}
Hence by Fubini's theorem and \eqref{eq1.75-lemma:rcddensity},   there is $t_0 \in [2R, 3R]$ such that 
\begin{equation}\label{eq1.77-lemma:rcddensity}
 \mathcal H^{n-1} \Big (\overline{\{\epsilon_r \geq    c \} }\cap  \partial B_{t_0}(y_0) \Big ) >\frac 12  \mathcal H^{n-1} \Big ( \partial B_{t_0} (y_0) \Big ).
\end{equation}
In addition, by \eqref{eq1.6-lemma:rcddensity}, we have
\begin{equation}\label{eq1.78-lemma:rcddensity}
\mathcal H^{n-1} \restr{\partial B_{2R}(y_0) } \geq  c_1  \frac{2R}{t_0} \big ({\rm Prj} \big)_\sharp \Big ( \mathcal H^{n-1} \restr{\partial B_{t_0}(y_0) } \Big ) \geq \frac{2c_1}{3}  \big ({\rm Prj} \big)_\sharp \Big ( \mathcal H^{n-1} \restr{\partial B_{t_0}(y_0) } \Big )
\end{equation}
and
\begin{equation}\label{eq1.79-lemma:rcddensity}
\mathcal H^{n-1} \restr{\partial B_{2R}(y_0) } \leq  c_2  \frac{t_0}{2R} \big ({\rm Prj} \big)_\sharp \Big ( \mathcal H^{n-1} \restr{\partial B_{t_0}(y_0) } \Big ) \leq \frac{3 c_2}{2}  \big ({\rm Prj} \big)_\sharp \Big ( \mathcal H^{n-1} \restr{\partial B_{t_0}(y_0) } \Big )
\end{equation}

 Combining the estimates \eqref{eq1.77-lemma:rcddensity}, \eqref{eq1.78-lemma:rcddensity} and \eqref{eq1.79-lemma:rcddensity}, we get
 \begin{eqnarray*}
 && \mathcal H^{n-1} \restr{\partial B_{2R}(y_0) }  \Big ({\rm Prj}\big ( \overline{\{ \epsilon_r \geq    c \}} \cap   B_{3R}(y_0) \setminus B_{2R}(y_0) \big)  \Big )\\
 &{\geq}& \mathcal H^{n-1} \restr{\partial B_{2R}(y_0) }  \Big ({\rm Prj}\big (\overline{ \{ \epsilon_r \geq    c \}} \cap   \partial B_{t_0}(y_0)  \big)  \Big )\\
\text{by}~ \eqref{eq1.78-lemma:rcddensity}  &\geq& \frac{2c_1}{3} \mathcal H^{n-1}\restr{\partial B_{t_0}(y_0) }  \Big (\overline{\{ \epsilon_r \geq    c \}  }\cap \partial B_{t_0}(y_0)\Big)\\
 \text{by}~ \eqref{eq1.77-lemma:rcddensity}   &>&\frac{2c_1}{3}   \frac 12  \mathcal H^{n-1} \big ( \partial B_{t_0} (y_0) \big )\\
 \text{by}~ \eqref{eq1.79-lemma:rcddensity}  &\geq & \frac{c_1}{3}   \big ({\rm Prj}^{-1} \big)_\sharp \Big ( \frac{2}{3 c_2} \mathcal H^{n-1} \restr{\partial B_{2R}(y_0) } \Big )\big ( \partial B_{t_0} (y_0) \big ),
 \end{eqnarray*}
and finally
 \begin{equation}\label{eq1.8-lemma:rcddensity}
\mathcal H^{n-1} \big ({\rm Prj}\big (\overline{ \{\epsilon_r\geq    c \} }  \cap   B_{3R}(y_0) \setminus {B_{2R}(y_0)}\big )\big)\geq \frac 29  \frac {c_1}{c_2} \mathcal H^{n-1} ( \partial B_{2R}(y_0)).
\end{equation}

By \eqref{eq1.8-lemma:rcddensity} we can find  an integer   $N \gtrsim \frac 1 {r^{n-1}}$ which is  independent of $c$, and  $N$ points $\{y_1, y_2,..., y_N\} \subset \{ x: \epsilon_r(x) \geq    c \}   \cap   B_{3R}(y_0) \setminus {B_{2R}(y_0)}$,  such that $\{{\rm Prj}(y_1), {\rm Prj}(y_2),..., {\rm Prj}(y_N)\}$ are sparsely distributed on $\partial B_{2R}(y_0)$ satisfying the following estimate:
\[
\d_\g \Big( {\rm Prj}(y_i),  {\rm Prj}( y_j)\Big) >\frac{4c_2}{c_1} r  ~~~~~~~\forall 1\leq  i<j \leq N.
\]
So by  \eqref{eq2.5-lemma:rcddensity},  we have
 \begin{equation}\label{eq1.81-lemma:rcddensity}
\d_\g \Big (\gamma^{y_0 {\rm Prj}(y_i)}_s,  \gamma^{y_0 {\rm Prj}(y_j)} _t\Big)  >\frac{c_1}2 \frac{4c_2}{c_1}  r= {2 c_2}  r ~~\text{ for any}~~ s, t\in [\frac 12, 1].
\end{equation}
By \eqref{eq:epsilonnei}, we  also have
 \begin{equation}\label{eq1.82-lemma:rcddensity}
\mathop{\bigcup}_{t\in [\frac 12, \frac23]} \supp \mu^{r, y_i}_t  \subset \overline{ \Big (\gamma^ {y_0y_i}_t \restr{t\in [\frac 12, \frac23]}\Big)_{c_2r}} \subset \overline {\Big (\gamma^ {y_0 {\rm Prj}(y_i)}_t \restr{t\in [\frac 12, 1]}\Big)_{c_2r}}.  
\end{equation}
Combining \eqref{eq1.81-lemma:rcddensity} and \eqref{eq1.82-lemma:rcddensity}, we know
$\mathop{\bigcup}_{t\in [\frac 12, \frac23]} \supp \mu^{r, y_i}_t$, $i=1,...,N$ are  disjoint.

  Furthermore, consider  the following set
\[
N(y_i, r):=   \left \{{\bf t}=(t_1, t_2,...): {t_i\in \left[\frac 12, \frac23 \right]},  \supp \mu^{r, y_i}_{t_1}, \supp \mu^{r, y_i}_{t_2},... \subset B_{2R}(y_0)~\text{are}~\text{disjoint} \right \}.
\]
It can be seen that $\max_{{\bf t}\in N(y_i,r)} |{\bf t}| \gtrsim \frac 1 r$, $i=1,..., N$.

In conclusion, we can find  approximate $\frac 1{r^n}$ measures whose supports are disjoint in $B_{2R}(y_0)$.
Combining with \eqref{eq4-lemma:rcddensity} and local finiteness of $\mm$ (c.f.  Theorem 4.24 \cite{S-O1})  we obtain the following estimate
 \begin{equation}\label{eq5-lemma:rcddensity}
r^n  \min \Big \{ \epsilon_r(y_0), \epsilon_r(y_1), ..., \epsilon_r(y_N)\Big \}    \frac 1{r^{n}}  <C \mm\big(B_{2R}(y_0)\big)<\infty
\end{equation}
where $C$  is independent of $c$.  By the choice of  $\{y_1, ...,  y_N\}$, we know 
$$ \min \Big \{ \epsilon_r(y_1), ..., \epsilon_r(y_N)\Big \}  \geq c.
$$
Letting $c \to \infty$,  by \eqref{eq5-lemma:rcddensity} we get $\epsilon_r(y_0) < C \mm (B_{2R}(y_0))$,   which is  the contradiction.     Therefore $\mm_{\rm ac} \neq 0$.
\bigskip

{\bf Step 2}:  $\mm_{\rm s} =0$ on $U$.

We will prove  the assertion by contradiction.
Assume that $\mm\restr{U}$ is not absolutely continuous w.r.t. ${\vol}_\g$, then there exists a compact singular  set $\mathtt{N} \subset U$ such that $\mm(\mathtt{N})=\mm_{\rm s} (\mathtt{N})>0$ and ${\vol}_\g(\mathtt{N})=0$. 

Since $\mm_{\rm ac} \neq 0$,  there exists  a   bounded set $E_L$ with positive $\mm$-measure such that $\frac{\d \mm}{\d {\vol}_\g}< L$ on  $ E_L$.  Denote by $(\mu_t)$ the $L^2$-Wasserstein geodesic from $\mu_0:= \frac1{\mm(\mathtt{N})} \mm \restr{\mathtt{N}}$ to $\mu_1:= \frac1{\mm(E_L)} \mm \restr{E_L}$. By the choice of $E_L$,  we know $\mu_1 \ll {\vol}_\g$ with bounded density. By measure contraction property of  $(\overline{U}, \d_\g, {\vol}_\g)$,  we know $\mu_t \ll {\vol}_\g$   for any $t>0$. In particular $\mu_t(\mathtt{N})=0$, so there is a Borel set $A_t \subset \supp \mu_t$ such that $A_t \cap \mathtt N=\emptyset$ and $\mu_t(A_t)=1$.  However,  by Lemma 3.1 \cite{Rajala12} again, we   have  $\mu_t \leq C_1 \mm$  for some constant  $C_1>0$.  Next we will  show the contradiction using the  argument in  \cite{Kell-Transport} (c.f. Lemma 6.4 therein).  Given $\epsilon>0$, we know 
\[
A_t \subset \supp \mu_t  \subset  (\supp \mu_0  )_\epsilon= ( \mathtt{N} )_\epsilon
\]
for $t$ small enough. Then 
\begin{eqnarray*}
\mm ( \mathtt{N}) &=& \lmt{\epsilon} 0 \mm  \big (  (\mathtt{N} )_\epsilon \big ) \\
&\geq & \lmts{t}{0} \mm(\supp \mu_t)\\
&\geq&  \lmts{t}{0} \mm(A_t\setminus \mathtt{N})+\mm(\mathtt{N})\\
&=&  \lmts{t}{0} \mm(A_t)+\mm(\mathtt{N})\\
&\geq& \lmts{t}{0} \frac 1 C_1 \mu_t(A_t)+\mm(\mathtt{N})\\
&\geq& \frac 1{C_1} +\mm(\mathtt{N})
\end{eqnarray*}
which  is the contradiction.  

Finally, since the choice of  $U$ is arbitrary, we know $\mm  \ll {\vol}_\g$ on whole $M$.

\bigskip

{\bf Part b):}

Given $x\in M$. 
For any  $y \in B_{3R}(x) \setminus \overline{B_{2R}(x)}$,  let us consider the $L^2$-Wasserstein geodesic $(\mu^r_t)_t$  from  $\mu^r_0:= \frac1{\mm\big(B_r(x)\big)} \mm \restr{B_r(x)}$ to $\mu^r_1:= \delta_y$.  By  measure contraction property, we have  the following (uniform) estimate
\begin{equation}\label{eq6-lemma:rcddensity}
\mm\big(\supp \mu^r_t\big) \gtrsim  \mm\big(B_r(x)\big) ~~~~~\forall t\in [0,\frac 23].
\end{equation}
Combining the definition of $\epsilon_r$  \eqref{eq1-lemma:rcddensity} and \eqref{eq6-lemma:rcddensity}, we get 
\begin{equation}\label{eq7-lemma:rcddensity}
\mm\big(\supp \mu^r_t\big) \gtrsim  r^n \epsilon_r(x)  ~~~~~\forall t\in [0,\frac 23].
\end{equation}

As  previously shown  in Part 1),  there exist (approximate) $\frac 1{r^n}$ measures whose supports are disjoint.  Combining with \eqref{eq7-lemma:rcddensity} we get
 \begin{equation}\label{eq8-lemma:rcddensity}
\epsilon_r(x)=\big (  r^n \epsilon_r(x) \big ) \frac 1{r^n}   \lesssim \mm\big(B_{2R}(x)\big).
\end{equation}
Since $\mcpkn$ condition yields  measure doubling property,  $\mm(B_{2R}(x))$ is finite. Hence  $\epsilon_r \in L^\infty(U, \vol_\g)$ uniformly in $r$.

Letting $r \to 0$,   by Lebesgue differentiation theorem  there is $e^{-V}\in L^\infty_\loc(M, {\vol}_\g)$ such that   
\[
e^{-V(x)}= \lmt{r}{0} \frac {\mm\big(B_r(x)\big)}{{\rm Vol}_{\rm g}\big(B_r(x)\big)},~~~{\vol}_\g-\text{a.e.}~~x.
\]

\bigskip

{\bf Part c):}  
%From the proof of  last part, we can define a real-valued function $\bar V$ by
%\[
%e^{-\bar V(x)}:= \lmts{r}{0} \frac {\mm(B_r(x))}{{\rm Vol}_{\rm g}(B_r(x))}<\infty.
%\]
%So $e^{-\bar V}=e^{-V}$ and $e^{-\bar V(x)}=0$ only on a negligible set.

For   $x \in M$ and $0<r\ll1$, we define 
\begin{equation}\label{eq8.9-lemma:rcddensity}
 \delta_r(x):=\frac{r^{N-n} {\vol}_\g\big(B_r(x)\big)}{\mm\big(B_r(x)\big)}.
\end{equation}

%In case $\bar V(x)<\infty$, we have
%\begin{eqnarray*}
%\frac{{\vol}(B_r(x))}{\mm(B_r(x))} &=& \delta_r \\
%& =& \lmti {\epsilon}{0} \Big \{ \int_\epsilon^r \delta'_t\, \d t +  \delta_\epsilon \Big \}\\
%&=&  \int_0^r \delta'_t\, \d t +  \lmti{\epsilon}{0} \delta_\epsilon \\
%&=&  \int_0^r \delta'_t\, \d t +e^{\bar V(x)}.
%\end{eqnarray*}

Denote by  $(\nu^r_t)_t$ the  Wasserstein geodesic from  $\nu^r_0:= \frac1{{\vol}_\g\big(B_r(x)\big)} {\vol}_\g \restr{B_r(x)}$ to $\nu^r_1:= \delta_y$, with $0<r \ll R$.  By  measure contraction property (of  compact Riemannian manifolds),   we have
\[
{\vol}_\g\big(\supp \nu^r_t\big) \gtrsim \vol \big(B_r(x)\big)  ~~~~~\forall t\in [0,\frac 23].
\]

Combining with \eqref{eq8.9-lemma:rcddensity}, we obtain
\begin{equation}\label{eq9-lemma:rcddensity}
{\vol}_\g\big(\supp \nu^r_t\big) \gtrsim r^{n-N}\delta_r \mm\big(B_r(x)\big)  ~~~~~\forall t\in [0,\frac 23].
\end{equation}

By Bishop-Gromov inequality (c.f. Corollary 2.4 \cite{S-O2}), we know $\mm(B_r(x)) \gtrsim \big( \frac r {R} \big)^N\mm(B_{2R}(x))$. Therefore  \eqref{eq9-lemma:rcddensity} implies
\begin{equation}\label{eq10-lemma:rcddensity}
{\vol}_\g\big(\supp \nu^r_t\big) \gtrsim   \delta_r  r^n  ~~~~~\forall t\in [0,\frac 23].
\end{equation}
Similarly,  we can find approximate $\frac 1 {r^{n}}$ measures whose supports are disjoint inside $B_{2R}(x)$,  then we obtain
\begin{equation}\label{eq11-lemma:rcddensity}
\delta_r      \lesssim {\vol}_\g \big(B_{2R}(x)\big).
\end{equation}
 Then we obtain the following  $L^\infty_\loc$ estimate
 \[
  \frac {r^{N-n} {\vol}_\g\big(B_r(x)\big)}{\mm\big(B_r(x)\big)} \in L^\infty_\loc.
 \]

%If $\bar V(x)=\infty$, by \eqref{eq7-lemma:rcddensity}  and L'H\^opital's rule we obtain
%\[
%\delta'_t \lesssim \big (r^{n-N} \big)'= .
%\]

If $N=n$,  we have $\frac{ {\vol}_\g(B_r(x))}{\mm(B_r(x))} \in L^\infty_\loc$. Letting $r \to 0$,  by Lebesgue differentiation theorem we know 
\[
e^{V}= \lmt{r}{0} \frac {{\vol}_\g\big(B_r(x)\big)}{\mm\big(B_r(x)\big)} \in L^\infty_\loc.
\]
Combining with $e^{-V} \in L^\infty_\loc$, we get  $V \in L^\infty_\loc$ (see also  Proposition \ref{prop:differentiableL1}).

\end{proof}

\bigskip
The following result has been  proved in the Part 1) of the proof above (see also Lemma 6.4 \cite{Kell-Transport}). For convenience of  later applications,  we extract  it as a  separate lemma.

\begin{lemma}\label{lemma:sigular}
Let   $\mu_0, \mu_1$  be  two probability measures with compact support. Assume that  $ \mu_1  \ll {\vol}_\g$ and  $(\mu_t) \subset \mathcal W_2(M, \g)$ is  the  unique  $L^2$-Wasserstein geodesic   connecting  $\mu_0$ and  $\mu_1$.
If  there exists  a locally finite measure $\mm$  such that the density  functions $\frac{\d \mu_t}{\d \mm}, t\in [0,1]$ are  uniformly bounded. Then $\mu_0 \ll {\vol}_\g$.

\end{lemma}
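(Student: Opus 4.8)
The plan is to argue by contradiction, along the lines of Part~1) of the proof of Proposition~\ref{lemma:rcddensity}. Suppose $\mu_0\not\ll\vol$. Writing $\mu_0=\mu_{0,\mathrm{ac}}+\mu_{0,\mathrm s}$ for its Lebesgue decomposition, $\mu_{0,\mathrm s}\neq 0$, so $\mu_{0,\mathrm s}$ is concentrated on a Borel set of vanishing $\vol$-measure; by inner regularity of the finite measure $\mu_{0,\mathrm s}$ we may select a \emph{compact} set $\mathtt N$ with $\vol(\mathtt N)=0$ and $m_0:=\mu_0(\mathtt N)>0$. Since $(\mu_t)$ is a $W_2$-geodesic it lifts to a dynamical plan $\Pi\in\mathcal P(\geo(M))$ with $(e_t)_\sharp\Pi=\mu_t$ (c.f.\ Theorem~2.10 \cite{AG-U}); because $\supp\mu_0$ and $\supp\mu_1$ are compact, $\Pi$ is concentrated on constant-speed minimizing geodesics $\gamma$ with $\gamma_0\in\supp\mu_0$, $\gamma_1\in\supp\mu_1$ and $\mathrm{length}(\gamma)\le D:=\sup\{|xy|_\g:x\in\supp\mu_0,\ y\in\supp\mu_1\}<\infty$.

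Next I would restrict $\Pi$ to the geodesics issuing from $\mathtt N$: set $\Pi':=m_0^{-1}\,\Pi\restr{e_0^{-1}(\mathtt N)}$ and $\mu'_t:=(e_t)_\sharp\Pi'$. By the restriction property of optimal dynamical plans, $(\mu'_t)$ is a $W_2$-geodesic between $\mu'_0=m_0^{-1}\mu_0\restr{\mathtt N}$ and $\mu'_1:=(e_1)_\sharp\Pi'$. Two features pass to $(\mu'_t)$ for free: writing $C$ for the uniform bound $\mu_t\le C\mm$, one has $\mu'_t\le C'\mm$ for all $t$, with $C':=C/m_0$; and $\mu'_1\le m_0^{-1}\mu_1\ll\vol$. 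Moreover, for $\Pi'$-a.e.\ $\gamma$ we have $\gamma_0\in\mathtt N$ and $|\gamma_t\gamma_0|_\g=t\,\mathrm{length}(\gamma)\le tD$, hence $\supp\mu'_t\subset\overline{(\mathtt N)_{tD}}$, the closed $tD$-neighbourhood of $\mathtt N$.

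The decisive step is to propagate absolute continuity from the a.c.\ endpoint $\mu'_1$ inward: by McCann's interpolation theorem (equivalently, because $M$ is non-branching and, inside a bounded domain containing all the supports, satisfies $\mcpkn$ with $N=n$ and a suitable $K$), $\mu'_1\ll\vol$ forces $\mu'_t\ll\vol$ for every $t\in(0,1]$, so $\mu'_t(\mathtt N)=0$. Then for $t\in(0,1)$ the Borel set $A_t:=\supp\mu'_t\setminus\mathtt N$ satisfies $\mu'_t(A_t)=1$, whence $\mm(A_t)\ge(C')^{-1}\mu'_t(A_t)=(C')^{-1}$, while $A_t\subset\overline{(\mathtt N)_{tD}}\setminus\mathtt N$. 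Given any $\eps>0$, choosing $t$ with $tD<\eps$ yields $\mm\big((\mathtt N)_\eps\setminus\mathtt N\big)\ge(C')^{-1}$; since this holds for all $\eps>0$ and $\mathtt N$ is compact, so that $(\mathtt N)_\eps\setminus\mathtt N\downarrow\emptyset$ as $\eps\downarrow0$, local finiteness of $\mm$ lets continuity from above force $\mm\big((\mathtt N)_\eps\setminus\mathtt N\big)\to0$ — a contradiction. Therefore $\mu_0\ll\vol$.

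The only step that is not mere bookkeeping is the propagation $\mu'_1\ll\vol\Rightarrow\mu'_t\ll\vol$ for $t<1$; this is exactly where the Riemannian structure enters (via non-branching together with the local measure contraction property, or directly via McCann), and it is harmless here. The subtlety to keep in mind is that $\mathtt N$ must be taken closed, so that its $\eps$-neighbourhoods genuinely shrink down to $\mathtt N$ and the final continuity-from-above argument closes the contradiction.
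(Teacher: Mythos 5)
Your proof is correct and takes essentially the same route as the paper's, which is embedded in Part 1) of the proof of Proposition~\ref{lemma:rcddensity} (and attributed there to Kell's Lemma~6.4): extract a compact singular set $\mathtt N$ with $\vol(\mathtt N)=0$ and $\mu_0(\mathtt N)>0$, propagate absolute continuity from the a.c.\ endpoint $\mu_1$ to the intermediate measures via McCann's interpolation (the paper phrases this as the measure contraction property of $(\overline U,|\cdot|_\g,\vol)$), and conclude from the uniform bound $\mu_t\le C\mm$ together with continuity from above of the locally finite measure $\mm$ along the shrinking neighbourhoods of $\mathtt N$. Your restriction of the dynamical plan to $e_0^{-1}(\mathtt N)$ is a natural and correct way to adapt the argument to the lemma's setting where $\mu_0$ is given abstractly rather than constructed to be concentrated on $\mathtt N$; one could also skip it and argue directly that $\mu_t\big((\mathtt N)_{tD}\setminus\mathtt N\big)\ge\mu_0(\mathtt N)$, but this is only a cosmetic difference.
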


\subsection{Measure rigidity:  regularity  of  density,  non-collapsing}

In the following two propositions, we will improve   the  regularity  of  density functions obtained in Proposition \ref{lemma:rcddensity}.

\begin{proposition}[]\label{prop:differentiable}
Let $V: M \mapsto \R \cup \{+\infty\}$  be an extended-valued function on  a complete Riemannian manifold $(M, {\rm g})$, such that  $(M, \d_\g,  e^{-V}{\rm Vol}_{\rm g})$ is a $\cd$ space. Then
$V$ has a semi-convex, locally Lipschitz  representative.  
\end{proposition}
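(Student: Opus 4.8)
The plan is to derive a semi-convexity inequality for the logarithmic density by feeding the displacement-convexity inequality \eqref{eq1.5-intro} into uniform measures on small metric balls. Write $\mm:=e^{-V}\vol$, which (as throughout this section) is a locally finite reference measure of full support, so $e^{-V}\in L^1_{\rm loc}(\vol)$. Let $g^*(x):=\lim_{r\to0}\mm(B_r(x))/\vol(B_r(x))$ be the Radon--Nikodym density, defined $\vol$-a.e.\ by Lebesgue differentiation, and put $V^*:=-\ln g^*$. It suffices to prove (i) that $Z:=\{g^*=0\}$ is $\vol$-negligible, so that $V^*=V$ $\vol$-a.e., and (ii) that $V^*$ coincides off a $\vol$-null set with a function that is semi-convex on each compact subset of $M$; indeed semi-convex functions are automatically locally Lipschitz.

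First I would prove the core estimate. Fix compact sets $\Omega\subset\mathrm{int}(\Omega')\subset M$ and points $x_0,x_1\in\Omega$ which — as holds for $\vol\otimes\vol$-a.e.\ pair — are Lebesgue points of $g^*$ with $g^*(x_i)>0$, joined by a unique, non-conjugate minimizing geodesic $\gamma\subset\Omega'$. For small $r>0$ set $\mu^r_i:=\mm(B_r(x_i))^{-1}\mm\restr{B_r(x_i)}\ll\mm$, and let $(\mu^r_t)=(e_t)_\sharp\Pi^r$, $\Pi^r\in\mathcal{P}(\geo(M))$, be a $\ws$-geodesic provided by the $\cd$ condition, so that for all $t\in[0,1]$
\[
\tfrac K2\,t(1-t)\,W_2^2(\mu^r_0,\mu^r_1)+{\rm Ent}(\mu^r_t\,|\,\mm)\ \le\ t\,{\rm Ent}(\mu^r_1\,|\,\mm)+(1-t)\,{\rm Ent}(\mu^r_0\,|\,\mm).
\]
Here ${\rm Ent}(\mu^r_i\,|\,\mm)=-\ln\mm(B_r(x_i))$ exactly, while Jensen's inequality gives ${\rm Ent}(\mu^r_t\,|\,\mm)\ge-\ln\mm(\supp\mu^r_t)$. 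Since $\Pi^r$-a.e.\ geodesic joins $\overline{B_r(x_0)}$ to $\overline{B_r(x_1)}$ and is minimizing, for $r$ small it stays $C^0$-close to $\gamma$; the Rauch comparison estimates \eqref{eq1.6-lemma:rcddensity}--\eqref{eq2-lemma:rcddensity} already used in the proof of Proposition~\ref{lemma:rcddensity} then yield $\supp\mu^r_t\subset B_{L_t r}(\gamma_t)$ with $n\ln L_t\le C_{\Omega'}\,t(1-t)\,d(x_0,x_1)^2$ (the quadratic-in-distance form of Rauch's theorem; the factor $t(1-t)$ comes from $L_0=L_1=1$ together with $\min\{t,1-t\}\le2t(1-t)$). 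Feeding these three bounds into the inequality and using $\mm(B_\rho(x))=g^*(x)\vol(B_\rho(x))(1+o(1))$ at Lebesgue points, $\vol(B_\rho(x))=\omega_n\rho^n(1+O(\rho^2))$ uniformly on $\Omega'$, and $W_2(\mu^r_0,\mu^r_1)\to d(x_0,x_1)$, all the $\rho^n$-terms and dimensional constants cancel, and in the limit $r\to0$ one gets
\[
V^*(\gamma_t)\ \le\ (1-t)\,V^*(x_0)+t\,V^*(x_1)-\tfrac K2\,t(1-t)\,d(x_0,x_1)^2+n\ln L_t,
\]
so that, by the bound on $L_t$, $V^*$ is $(K-2C_{\Omega'})$-semi-convex along the geodesics of $\Omega$ for $\vol\otimes\vol$-a.e.\ pair of endpoints.

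Next I would dispose of (i) and (ii). For (i): were $\vol(Z)>0$, then since $\mm(B_r(x))=o(r^n)$ for every $x\in Z$, a minimizing geodesic joining two Lebesgue points $x_0,x_1\notin Z$ of $g^*$ and passing through some point of $Z$ at an interior time $t$ — which exists because $(x_0,x_1)\mapsto\gamma_t$ is a submersion for fixed $t\in(0,1)$, hence its image charges $Z$ — would force $\limsup_{r\to0}{\rm Ent}(\mu^r_t\,|\,\mm)=+\infty$ while the right-hand side of the displacement inequality stays of order $n\ln(1/r)+O(1)$, a contradiction. For (ii): knowing now that $Z$ is $\vol$-null, work in a coordinate chart, where $d(\cdot,\cdot)^2$ is bi-Lipschitz to the Euclidean square distance; then for $c$ large enough the function $u:=V^*+c|x|^2$ satisfies $u(\tfrac{x+y}{2})\le\tfrac12 u(x)+\tfrac12 u(y)$ for $\vol\otimes\vol$-a.e.\ pair $(x,y)$, and a $\vol$-measurable, $\vol$-a.e.\ finite function with this property coincides $\vol$-a.e.\ with a convex function (a classical fact about almost-everywhere midpoint-convex functions). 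Undoing the correction and letting $\Omega,\Omega'$ vary yields the desired semi-convex, locally Lipschitz representative of $V$.

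I expect the main obstacle to be the uniform geometric control in the second step: one must show that the optimal lift $\Pi^r$ is concentrated on geodesics $C^0$-close to the single fixed geodesic $\gamma$ — uniqueness and non-conjugacy of $\gamma$, valid for $\vol\otimes\vol$-a.e.\ pair, are used here — and one must extract from Rauch's comparison the sharp bound $n\ln L_t\lesssim t(1-t)\,d(x_0,x_1)^2$ with the correct vanishing as $t\to0,1$; a merely bounded estimate would yield only ``semi-convexity up to an additive constant'', which does not imply local Lipschitz regularity. The remaining measure-theoretic upgrade in (ii) is routine.
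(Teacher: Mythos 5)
Your core estimate — feeding uniform measures on shrinking balls into \eqref{eq1.5-intro}, using Jensen's inequality for the intermediate entropy and Rauch comparison for the support spread — does yield an a.e.\ semi-convexity inequality for $V^*:=-\ln g^*$ along geodesics, and step (i) ruling out $\vol(\{g^*=0\})>0$ works by the same device. This is close in spirit to the paper, which however first subtracts the von Renesse--Sturm $k$-convexity of ${\rm Ent}(\,\cdot\,|\vol)$ to reduce to $K$-convexity of $\mu\mapsto\int V\,\d\mu$, rather than tracking the $n\ln L_t$ term.

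The genuine gap is step (ii), which you dismiss as routine. The inequality you derive lives on \emph{Riemannian} geodesics, holds only for $\vol\otimes\vol$-a.e.\ pair of endpoints, and involves the a.e.-defined density $V^*$. Passing to \emph{Euclidean} a.e.\ midpoint-convexity of $V^*+c|x|^2$ in a chart requires controlling $\big|V^*(\gamma_{1/2})-V^*\big(\tfrac{x+y}{2}\big)\big|$ by $d(x,y)^2$, since the geodesic midpoint and the chart midpoint differ by $O(d^2)$ — but such a bound is precisely the Lipschitz-type regularity you are trying to prove; the reduction is circular, and enlarging $c$ only offsets a semi-convexity constant, not the mismatch between the two midpoints. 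Moreover, the mollification argument behind the ``classical fact'' you invoke needs $V^*\in L^1_{\loc}$, which is not established (you control $e^{-V^*}\in L^1_{\loc}$, hence only the negative part of $V^*$), and on a manifold ball-averaging does not commute with the geodesic midpoint map, so it is not clear that mollification preserves a.e.\ geodesic midpoint-convexity at all. The paper's proof spends most of its length exactly here: instead of Lebesgue balls it builds a Vitali cover $\mathcal V$ whose members are supports of Wasserstein geodesics from a Dirac to a uniform ball; these sets have bounded eccentricity (so the $\mathcal V$-limit $\bar V$ equals $V$ a.e.)\ \emph{and} are compatible with the optimal-transport structure, which is what allows the displacement-convexity inequality to propagate to a pointwise convexity inequality for $\bar V$ at every geodesic midpoint, with Assertions 2) and 3) then upgrading to continuity and local Lipschitz bounds for a single-valued representative. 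A rigorous version of your step (ii) would essentially amount to rediscovering that construction.
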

\begin{proof}

Denote $\mm:=e^{-V}{\rm Vol}_{\rm g}$.
Let $\Omega \subset M$ be  a   convex, bounded open  set  such that points in $\Omega$ do not have cut-locus inside $\Omega$,  and let  $L>0$  be a constant such that  the sub-level set  $ \{V\leq L \} \cap \Omega$ has positive $\mm$-measure (and hence positive $\vol_\g$-measure). Let $\mathcal V$ be  a family of sets  of bounded eccentricity defined in Lemma \ref{lemma:bddecent}. Denote by $E_L$ the subset  of $ \{V\leq L \}$ which consists of density 1 points, i.e.
\begin{equation}\label{eq1-prop:diff}
E_L:=\left \{ x: \lmt{U \in \mathcal V}{x} \frac {\vol_\g \big (\{V \leq L\} \cap U \big)}{\vol_\g\big (U\big)}=1\right \}.
\end{equation}

%Firstly we consider a family $\mathcal V$ of  measurable sets  defined in the following way.  %We say that $U \in \mathcal V$ if there exists a geodesic $\gamma$ in $\Omega$,  a %%smooth function $\varphi : \Omega \mapsto \R$  with
%\[
%\nabla \varphi (\gamma_0)=\dot{\gamma}_{0},~~~\H_\varphi(\gamma_0)=0,
%\]
% such that
%$$U=\supp  \left ((T_{t})_\sharp \Big (\frac {1}{{\vol}_\g(B_r({\gamma}_{0}))}  %{\vol}_\g\restr{B_r({\gamma}_{0})}\Big)\right )~~\text{ for some}~~r \ll 1,~t \in [0,  1],$$ %where 
%$$
%T_t(\cdot):=\exp_\cdot{\big (- t \nabla \varphi (\cdot) \big)}
%$$  is  an optimal transport map (c.f. \cite{McCann01GAFA}). 

Firstly we will  prove the following claim.

{\bf Step 1}: For any $x\in E_L$, there is $r_0>0$ such that  

\begin{equation}\label{eq:convexlevel}
E_L \cap B_{r_0}(x)= {{ {\rm Conv}\big(E_L \cap B_{r_0}(x) \big)}}^\circ
\end{equation}
where $ {{ {\rm Conv}\big(E_L \cap B_{r_0}(x)\big)}}^\circ$ is the interior points of  the convex-hull  of ${E_L \cap B_{r_0}(x)}$   in $\Omega$.     In particular, $E_L$ is open and connected, $V$ is locally bounded from above.

\bigskip

Given  $x \in E_L$, $v\in T_x M$  and $y:=\exp_x(\frac 12 v)$.   Let  $(\mu^{\delta}_t)_{t\in [0,1]}$  denotes the $L^2$-Wasserstein geodesic from $\mu^{ \delta}_0:= \frac 1{{\vol}_\g \big( E_L \cap A_0^{\delta}\big)} {\vol}_\g \restr{ E_L \cap A^{\delta}_0}$ to   $\mu^{{\delta}}_1:= \frac 1{{\vol}_\g\big( A^{ \delta}_1 \big)}{\vol}_\g \restr{A^{\delta}_1}$, where $A^{ \delta}_0=B_{\delta}(x), A^{\delta}_1=B_{\delta}\big(\exp_x(v)\big)$  are  geodesic balls with radius $ \delta >0$.
 
By direct computation (c.f. `(ii) $\Longrightarrow$ (i)'  in the proof of Theorem 1.1 \cite{SVR-T}), we have
\begin{equation}\label{taylor1}
{\rm Ent}(\mu^{ \delta}_0| \vol_\g)=-\ln \vol_\g \big (E_L\cap A^{\delta}_0\big)=-\ln c_n -n\ln \delta-\ln \frac{\vol_\g\big(E_L\cap A^{\delta}_0\big)}{\vol_\g \big( A^{\delta}_0\big)}+O(\delta^2),
\end{equation}
and
\begin{equation}\label{taylor2}
{\rm Ent}(\mu^{ \delta}_1| \vol_\g)=-\ln \vol_\g \big(A^{\delta}_1\big)=-\ln c_n -n\ln \delta+O(\delta^2)
\end{equation}
where  $c_n := \mathcal L^n (B_1)$ in $\R^n$.

Since $\Omega$ is compact,   we may assume  that  the sectional curvature is bounded from above by $\kappa>0$.   By  Rauch's comparison theorem, 
for $\delta \ll |v| \ll1$ we can find  a set 
$$A^{ \delta}_{\frac 12}= B_{\big(1+(\kappa+1/n)|v|^2/8\big)\delta}(y)$$ 
such that $\gamma_{\frac 12} \in A^{ \delta}_{\frac 12}$ for each minimizing geodesic $\gamma: [0,1]\mapsto M$ with $\gamma_0 \in A^\delta_0, \gamma_1\in A^\delta_1$. In particular,  $\supp \mu^{\delta}_{\frac 12} \subset A^{ \delta}_{\frac 12}$.  Moreover,  by the asymptotic expansion formula in \cite[Page 929, Proof of Theorem 1.1]{SVR-T}
\begin{eqnarray*}
&&{\rm Ent}(\mu^{\delta}_{\frac 12}| \vol_\g)\\
 &\geq& -\ln \vol_\g(A^{ \delta}_{\frac 12})~~~~~~(\text{by Jensen's inequality} )\\
&\geq&-\ln c_n -n\ln \delta-\frac{(n\kappa+1)|v|^2}8+O(\delta^2)+O(|v|^4)~~~~(\text{c.f.  page 929 in \cite{SVR-T})}.
\end{eqnarray*}
Similarly,  by the expansion formulas \eqref{taylor1} and \eqref{taylor2},  for $k:={n\kappa}+2$, we have
\begin{eqnarray*}
&& \frac 12 {\rm Ent}(\mu^{\delta}_1| \vol_\g)+\frac 12 {\rm Ent}(\mu^{ \delta}_0| \vol_\g)-\frac{k}8 W^2(\mu^{ \delta}_{0}, \mu^{\delta}_{1})\\
 &=&-\ln c_n -n\ln \delta-\frac 12  \ln \frac{\vol_\g \big(E_L\cap B_\delta(x)\big)}{\vol_\g\big( B_\delta(x)\big)}- \frac{k|v|^2}8 +O(\delta).
\end{eqnarray*}

Thus for $\delta$ and $|v|$ small enough, we have
\begin{equation}\label{taylor3}
{\rm Ent}(\mu^{\delta}_{\frac 12}| \vol_\g)\geq \frac 12 {\rm Ent}(\mu^{\delta}_1| \vol_\g)+\frac 12 {\rm Ent}(\mu^{ \delta}_0| \vol_\g)-\frac{k}8 W^2(\mu^{ \delta}_{0}, \mu^{\delta}_{1}).
\end{equation}

By the  definition of  $\cd$,   ${\rm Ent}(~\cdot~ | \mm)$ is  $K$-convex along $(\mu^{ \delta}_t)$. Combining with  the inequality \eqref{taylor3} and the  following formula
\begin{equation*}
{\rm Ent}(~\cdot~ | \mm)={\rm Ent}(~\cdot~ | {\rm Vol}_\g)+\int V\,\d(\cdot),
\end{equation*}
we obtain 
\begin{eqnarray}\label{eq1-lemma:diff}
&& \int V\,\d \mu^{\delta}_{\frac 12 }-\frac{k-K}8W^2(\mu^{\delta}_{0}, \mu^{ \delta}_{1})\\ &\leq&  \frac 12\int V\, \d \mu^{ \delta}_1+\frac 12\int V\,\d \mu^{\delta}_0.
\end{eqnarray}
By replacing $V$ with $V+H$ for some locally $|K-k|$-convex function $H$ (and simultaneously replacing $\{V\leq L\}$ by $\{V+H\leq L\}$), we may assume $k=K$ without loss of generality.

By \eqref{eq1-lemma:diff} and $E_L \subset \{ V\leq L\}$,  we have
\begin{equation}\label{eq1.1-lemma:diff}
\int V\,\d \mu^{\delta}_{\frac 12}\leq  \frac 12\int V\, \d \mu^{ \delta}_1+\frac 12L.
\end{equation}
If  $y=\exp_x (\frac 12 v)$ is a density 1 point of $\{V>L\}$, by\eqref{eq1.1-lemma:diff} we get
\[
L< \lmti{\delta}{0}\int V\,\d \mu^{\delta}_{\frac 12}\leq  \lmti{\delta}{0} \frac 12\int V\, \d \mu^{ \delta}_1+\frac 12L,
\]
thus  
\begin{equation}\label{eq1.2-lemma:diff}
\gamma_1=\exp_x (v) \notin E_L.
\end{equation}

 Let $r_0>0$  be  small so that the geodesic ball $B_{r_0}(x)$ is geodesically convex.   Assume by contradiction that $\vol\Big ({\rm Conv}\big(E_L \cap B_{r_0}(x)\big)  \setminus E_L\Big )\neq 0$.  By Lebesgue density theorem,  the  density 1 points of  $\{V>L\}$ in $ B_{r_0}(x)$ are not negligible. 
 By  Fubini's theorem,   there is  $\gamma\subset B_{r_0}(x)$ such that $\gamma_0, \gamma_1 \in E_L$ and $\gamma_\frac 12$ is a  density 1 point of $\{V>L\}$, which contradicts to \eqref{eq1.2-lemma:diff}.

  Notice that  by Fubini's theorem we have $\vol_\g\Big (\partial {\rm Conv}\big(E_L \cap B_{r_0}(x)\big)\Big)=0$.  Hence
\[
E_L\cap B_{r_0}(x)=\mathop {{\rm Conv}\big(E_L  \cap B_{r_0}(x)\big )}^\circ 
\]
which is the thesis.

\bigskip

We  define an extended real-valued function $\bar V: M \mapsto \R \cup \{\pm \infty\}$ by
 \[
\bar V(x):= \lmti {\mathcal {V} \ni U}{x}  \frac1{{\vol}_\g(U)} \int_{U}V\,\d {\vol}_\g
\]
where $\mathcal {V} $ is the family of sets defined in Lemma \ref{lemma:bddecent}.

 Denote   $V^+:=V \vee 0$ and $V^-:=V \land  0$.  By the inequality $t \leq e^t$ on $[0, +\infty)$, we know $|V^-| \leq e^{-V} $ and $V^- \in L^1_\loc({\vol}_\g)$.  Combining with the fact that  $V^+ \leq L$ on $E_L$,  we get  $V\in L^1(E_L, {\vol}_\g)$.  By Lebesgue differentiation theorem,  we know $\bar V=V$ $\mm$-a.e.  on $E_L$ and  there is  $M^* \subset E_L$ with full measure such that

 $$
 \lmt{\mathcal {V} \ni U}{x} \frac1{{\vol}_\g(U)} \int_{U } V\,\d {\vol}_\g = \bar V(x) \in \R~~~~\forall x \in M^*.
 $$

{\bf Step 2}: $\bar V$ is  geodesically convex on $E_L$.  Then from  \cite{GW-convex} (see also Corollary 3.10 \cite{Constantin94}) we know  $\bar V$ is locally Lipschitz on $M$.

Let $\gamma \subset E_L$ be a geodesic with end points in $M^*$ and with small length,  and let $(\mu^\delta_t)$ be the geodesic defined similarly as in {\bf Step 1}. Similar to   \eqref{eq1-lemma:diff},  we have 
\begin{equation}\label{eq1.5-lemma:diff}
\int V\,\d \mu^{ \delta}_\frac 12\leq  \frac 12\int V\,\d \mu^{ \delta}_1+\frac 12\int V\,\d \mu^{\delta}_0.
\end{equation}

Letting $\delta \to 0$ in \eqref{eq1.5-lemma:diff},  we obtain
\begin{equation}\label{eq4-lemma:diff}
\bar V(\gamma_\frac 12)\leq   \frac 12  \bar V(\gamma_0)+\frac 12\bar V(\gamma_1)\leq L.
\end{equation}

  Given  $x\in E_L$, by Fubini's theorem we know there exists  $S_x \subset \{v\in S^1(T_x M)\}$ with full $\mathcal{H}^{n-1}$-measure and a  positive constant $\tau(x) \in  (0, 1]$,  such that  $\exp_x{(tv)}   \in  M^*$ for all $v \in S_x$ and  $\mathcal L^1$-a.e. $t\in (-\tau(x), \tau(x))$. 
Define a set of geodesic segments  $\Gamma_x$ by
 \[
 \Gamma_x:=\left \{\big(  \exp_x{(tv)}\big)_{t\in (-\tau(x),\tau(x))}:  v \in S_x \right \}.
\]
Then  for any $\gamma \in  \Gamma_x$,  \eqref{eq4-lemma:diff}  yields  that $\bar V$ is convex on   $\gamma  \cap M^*$. In particular, $\bar V$ is Lipschitz  on   $\gamma  \cap M^*$.

To prove the geodesical  convexity of $\bar V$ on whole $E_L$, we  just need to  show  the continuity of $\bar V$.   Then by an approximation argument,  we can see that  $\bar V$ satisfies \eqref{eq4-lemma:diff} on all geodesics.    With this aim, we will prove the following claims.

{\bf Claim 1}:  For any  $x\in E_L$,  $\Big \{\Lip \big(\bar V \restr{(\gamma_t)_{t\in [-\frac {\tau(x)}{ 2}, \frac {\tau(x)}{ 2}]} \cap M^*}\big)\Big \}_{ \gamma\in \Gamma_x}$ is bounded.

   By   \eqref{eq4-lemma:diff}  and the discussion thereafter, we know 
$$\bar V(\gamma_{0^+}):= \lmt{ y \in \gamma \cap M^*}{x} \bar V(y)\in [\bar V(x), L]
$$  for any $\gamma \in \Gamma_x$.
Thus  by convexity we know $\bar V$ is $\frac {2(L-\bar V(x)+1)}\tau(x)$-Lipschitz  on  $\Big \{ (\gamma_t)_{t\in [-\frac {\tau(x)}{ 2}, \frac {\tau(x)}{ 2}]} \cap M^*: \gamma\in \Gamma_x \Big\}$, which is the thesis.

\bigskip

We define a (possibly multi-valued)  function $\bar V':  E_L \mapsto [\bar V(x), L]$ by
\[
 \bar V'(x):=\Big\{
\bar V(\gamma_{0^+}): \gamma \in \Gamma_x\Big \}.
\]

  For any   $x \in M^*$,  by \eqref{eq4-lemma:diff}  we know the value of  $\bar V(\gamma_{0^+})$  is independent of the choice of the geodesic $\gamma \in \Gamma_x$ and  $\bar V(\gamma_{0^+})=\bar V(x)$, so $\bar V'= V$ almost everywhere.  Furthermore,  if we can show that  $\bar V'$ is continuous,  by definition we know $\bar V=\bar V'$ on  $E_L$. Therefore  it suffices it  to prove the following assertion.

{\bf Claim 2}: $\bar V'$ is single-valued and continuous on $E_L$.

Given  $x\in E_L$.    Assume by contradiction that $\bar V'$ is not single-valued, 
\begin{equation}\label{eq7-lemma:diff}
-\infty<\bar V(x)\leq \bar V(\gamma^1_{0^+})<\bar V(\gamma^2_{0^+})\leq L
\end{equation}
 for some different  $\gamma^1, \gamma^2 \in \Gamma_x$. By Fubini's theorem,  we can find  sequences $(x_n) \subset \gamma^1 \cap M^*$,  $(y_n) \subset \gamma^2\cap M^*$   such that 
$x_n, y_n \to x$, and  $y_n \in \gamma^{x_n} \in \Gamma_{x_n}$. 
From \eqref{eq4-lemma:diff},  we can see that  $\bar V'(x_n) \to \bar V(\gamma^1_{0^+})$, $\bar V'(y_n) \to \bar V(\gamma^2_{0^+})$,  and $\bar V' $ is Lipschitz on $\gamma^{x_n} \cap M^*$.
From   \eqref{eq7-lemma:diff} we also  know $\Lip \bar V' \restr{\gamma_{x_n}} \to +\infty$, which  contradicts to the fact that  $\bar V'\leq L$ on $E_L$.

Finally, let $(z_n) \subset E_L$ be an arbitrary sequence with $z_n \to x$.  By definition, we can find $z_n' \in M^*$ such that  $\d_\g(z_n',  z_n) <\frac 1n$ and $|\bar V(z_n')-\bar V'(z_n)| <\frac 1n$. By uniqueness  of $\bar V'(x)$ and  {\bf Claim 1},  we know $\bar V(z_n') \to \bar V'(x)$. So $\bar V'$ is continuous at $x$.
%%%%%%%%%%%%%%
% 还需要修改
%%%%%%%%%%%%%

\end{proof}

\begin{lemma}\label{lemma:bddecent}
We define  a family of  measurable sets  $\mathcal V$ in the following way.  We say that $U \in \mathcal V$ if there are $x, y \in M$, $\delta>0$, and a $L^2$-Wasserstein geodesic $(\mu^{\delta}_t)_{t\in [0,1]}$ from $\mu^{ \delta}_0:= \frac 1{{\vol}_\g \big( B_\delta(x) \big)} {\vol}_\g \restr{ B_\delta(x)}$ to   $\mu^{{\delta}}_1:= \frac 1{{\vol}_\g\big( B_\delta(y)\big)}{\vol}_\g \restr{B_\delta(y)}$, such that $U=\supp \mu^\delta_\frac 12$.

Then $\mathcal V$  is a  fine covering with  bounded eccentricity. This means that every point $x \in M$ is covered by  sets in 
${\mathcal {V}}$ with arbitrarily small diameter, and  there exists a constant $c >0$ such that each set $U \in \mathcal {V}$ is contained in a ball  $B_r$ and $\vol (U)\geq c \vol (B_r)$. 
\end{lemma}
\begin{proof}

Let  $(T_t)$ be a family of maps which induce $(\mu^\delta_t)$, i.e.  $\mu^\delta_t=(T_{t})_\sharp \mu^\delta_0$.   Denote the geodesic from $x$ to $y$ by $\gamma$.  On one hand, by Rauch's comparison  theorem,   there  exists a  constant  $C_1>0$ such that
\[
  \d_\g \big(\gamma_t,  T_{t}(z)\big) \leq C_1 \Big (\d_\g(\gamma_0,  z)  +\d_\g(\gamma_1,  T_1(z))\Big)\leq  2C_1 \delta
\]
for any $z\in \supp \mu^\delta_0=B_\delta(x)$. So we have
$$
 \supp \mu^\delta_t \subset B_{2C_1 \delta}(\gamma_t).
$$
On the other hand,   Riemannian manifolds are locally CD spaces, hence
$$
{{\vol}_\g \big(  \supp \mu^\delta_t \big)} \gtrsim \Big \{{{\vol}_\g \big( B_\delta(x) \big)}, {{\vol}_\g \big( B_\delta(y) \big)} \Big \}.
$$
Therefore by Bishop-Gromov inequality, the sets in $\mathcal V$ have bounded eccentricity.
Furthermore, let $x=y\in M$,  it can be seen that  $U=B_\delta(x)$, so $\mathcal V$  is a fine covering of $M$.
\end{proof}

\bigskip

\begin{proposition}[]\label{prop:differentiableL1}
Let $V: M \mapsto \R \cup \{+\infty\}$  be an extended-valued function on  a compact  Riemannian manifold $(M, {\rm g})$ with boundary, and $\mm=e^{-V}{\rm Vol}_{\rm g}$.    If   $(M, \d_\g,  \mm)$ satisfies  $\mcpkn$  for some $K\in \R$ and $N<\infty$.  Then 
$V$ is locally bounded in the interior of $M$.  In particular,    $(M, \d_\g,  \mm)$  is infinitesimally Hilbertian.  
\end{proposition}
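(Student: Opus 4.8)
The claim to be proven is that if $(M,|\cdot|_\g,\mm)$ with $\mm=e^{-V}\vol$ on a compact Riemannian manifold with boundary satisfies $\mcpkn$ for finite $N$, then $V$ is locally bounded in the interior, and consequently $(M,|\cdot|_\g,\mm)$ is infinitesimally Hilbertian. My strategy is to run the same two-sided density argument used for Part 2) and Part 3) of Proposition \ref{lemma:rcddensity}, now applied to a fixed interior point, using the fact that in the interior of $M$ the distance $|\cdot|_\g$ and the ambient Riemannian distance coincide on a small neighborhood and there are no cut points, so Rauch's and Toponogov's comparison theorems apply exactly as in the proof of Proposition \ref{lemma:rcddensity}.

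First I would fix a compact set $\mathcal K$ in the interior of $M$ and a relatively compact convex neighborhood $U$ with $\mathcal K\subset U$, small enough that $(U,|\cdot|_\g)$ is isometric to a convex domain in $(M,\g)$ without cut-locus, exactly the setting in which \eqref{eq1.6-lemma:rcddensity}--\eqref{eq2-lemma:rcddensity} hold. Then I apply Part 2) of Proposition \ref{lemma:rcddensity}: $\mcpkn$ on $U$ gives $e^{-V}=\lim_{r\to0}\mm(B_r(x))/\vol(B_r(x))\in L^\infty_\loc(\vol)$, so $V$ is locally bounded below on $\mathcal K$. For the upper bound on $V$ (equivalently a positive lower bound on $e^{-V}$), I run the argument of Part 3): take Wasserstein geodesics $(\nu^r_t)$ from $\frac1{\vol(B_r(x))}\vol\restr{B_r(x)}$ to a Dirac mass $\delta_y$ with $y\in B_{3R}(x)\setminus\overline{B_{2R}(x)}$, use $\mcpkn$ to bound $\vol(\supp\nu^r_t)$ from below by $r^{n-N}\delta_r(x)\,\mm(B_r(x))$, use Bishop--Gromov ($\mm(B_r(x))\gtrsim (r/R)^N\mm(B_{2R}(x))$) to get $\vol(\supp\nu^r_t)\gtrsim\delta_r(x)\,r^n$, and finally pack $\approx r^{-n}$ essentially disjoint such supports inside $B_{2R}(x)$ to conclude $\delta_r(x)=r^{N-n}\vol(B_r(x))/\mm(B_r(x))\lesssim\vol(B_{2R}(x))$, uniformly in $r$ and locally uniformly in $x\in\mathcal K$. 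Letting $r\to0$ and using Lebesgue differentiation (together with $N=n$ not being needed here since we only want boundedness, not a clean power), I combine $e^{-V}\in L^\infty_\loc$ with $e^{V}\in L^\infty_\loc$ to get $V\in L^\infty_\loc$ in the interior.

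For the second assertion, once $V$ is locally bounded in the interior, on any relatively compact open set $U$ in the interior we have $e^{-V}$ bounded above and below, so $\mm\restr U$ and $\vol\restr U$ are mutually absolutely continuous with bounded densities; hence $W^{1,2}(U,\mm)$ and $W^{1,2}(U,\vol)$ coincide as sets with equivalent norms, and the latter is Hilbert because $(M,\g)$ is Riemannian. Infinitesimal Hilbertianity being a local property (the parallelogram identity for $|\D\cdot|$ can be checked on a locally finite cover by such $U$'s, using locality of the minimal weak upper gradient), $(M,|\cdot|_\g,\mm)$ is infinitesimally Hilbertian.

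**The main obstacle.** The delicate point is the presence of the boundary: Bishop--Gromov on $(M,|\cdot|_\g,\mm)$ and the measure-contraction estimates \eqref{eq-MCP} hold globally by $\mcpkn$, but Rauch/Toponogov comparison and the ``no cut-locus, geodesics are ambient geodesics'' facts that make the volume $\vol(\supp\nu^r_t)\asymp r^n$ computation valid are only available in the interior. So I must keep $x$ in the fixed interior compact set $\mathcal K$ and choose $R$ small enough (depending only on $\mathcal K$, not on $x$) that $B_{3R}(x)$ stays in the interior region where $|\cdot|_\g$ agrees with the ambient distance and comparison geometry is uniform; verifying that such a uniform $R$ exists for all $x\in\mathcal K$ is the one genuinely new ingredient compared to Proposition \ref{lemma:rcddensity}, and it follows from compactness of $\mathcal K$ and lower semicontinuity of the injectivity radius and distance-to-boundary functions.
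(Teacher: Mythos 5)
Your plan for the lower bound $e^{-V}\in L^\infty_\loc$ (Part 2 of Proposition \ref{lemma:rcddensity}) and your treatment of infinitesimal Hilbertianity as a local property once $V$ is locally bounded are fine, and your remark about choosing a uniform $R$ on a compact interior subset is a correct but minor technical point. The gap is in the step where you claim $e^V\in L^\infty_\loc$. You propose to re-run Part 3 of Proposition \ref{lemma:rcddensity}, which produces the uniform bound $\delta_r(x)=r^{N-n}\vol(B_r(x))/\mm(B_r(x))\lesssim 1$, and then assert that ``$N=n$ is not needed here since we only want boundedness, not a clean power.'' That is exactly where the argument fails when $N>n$. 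Since $\vol(B_r(x))\approx r^n$ for interior $x$, the bound $\delta_r(x)\lesssim 1$ only gives $\mm(B_r(x))\gtrsim r^N$, which for $N>n$ is strictly weaker than $\mm(B_r(x))\gtrsim r^n=\vol(B_r(x))$, and is in fact just the Bishop--Gromov lower bound you already invoked as an ingredient. In particular you cannot conclude that $e^{V(x)}=\lim_{r\to0}\vol(B_r(x))/\mm(B_r(x))$ is finite; $e^{-V}$ could still vanish at a point. Part 3 of Proposition \ref{lemma:rcddensity} is explicitly stated to yield $V\in L^\infty_\loc$ only in the case $N=n$.

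The paper's proof of Proposition \ref{prop:differentiableL1} uses a genuinely different mechanism to get the positive lower bound on $e^{-V}$: a covering $\mathcal V$ by sets of the form $\supp\mu_1$ for $L^2$-Wasserstein geodesics $(\mu_t)$ emanating from a Dirac $\delta_{x_0}$ and passing through $\frac{1}{\mm(B_r(x))}\mm\restr{B_r(x)}$ at some time $s\in[\frac13,1]$. This covering has bounded eccentricity, so Lebesgue differentiation applies; and since the geodesic starts at a Dirac mass, $\mcpkn$ (applied to $\mm$, with a constant depending only on $K$, $N$ and $\mathrm{diam}$) gives directly $\mm(B_r(x))=\mm(\supp\mu_s)\geq C\,\mm(\supp\mu_1)$. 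Combining this with the density value $\mm(U)/\vol(U)\approx e^{-V(y)}$ at $\vol$-a.e.\ endpoint $y$, and using $\vol(\supp\mu_1)\gtrsim r^n$ from bounded eccentricity, one gets the weak mean-value inequality $e^{-V(x)}\gtrsim e^{-V(y)}$ for $\vol$-a.e.\ $y$ in an annulus around $x$, hence $e^{-V(x)}\gtrsim\mm(B_R(x))>0$ after averaging. That mean-value step is the essential idea you are missing: it circumvents the $N\neq n$ mismatch by never comparing $\mm(B_r)$ with $r^n$ directly, instead comparing the density at $x$ with the density at nearby points via MCP along a single geodesic.
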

\begin{proof}

We define the following family of functions with parameter $r\in (0, 1)$,  as in the proof of  Proposition \ref{lemma:rcddensity}, 
\begin{equation*}
\epsilon_r(x):=\frac {\mm(B_{r}(x))}{{\rm Vol}_{\rm g}(B_{r}(x))}.
\end{equation*}

Given  $x\in M$ and $R>0$  with $B_R(x) \subset M$ and  $\lmt{r}{0} \epsilon_r(x)=e^{-V(x)}$. We  define a family $\mathcal V$ of Borel sets in the following way.  We say that $U \in \mathcal V$ if there exist  $0<r \ll \frac R2$, $x_0  \in B_{2R}(x) \setminus {B_R(x)}$,  
and a  $L^2$- Wasserstein geodesic $(\mu_t)_{t\in [0,1]}$ with $\mu_0:=\delta_{x_0}$ and $\mu_s:=\frac 1{\mm\big(B_r(x)\big)} \mm \restr{B_r(x)}$ for some $s\in [\frac13, 1]$,  such that $U=\supp \mu_1$.
For the similar reason as Lemma \ref{lemma:bddecent},   we know  the sets in $\mathcal V$ is a covering of $H_R(x):=B_{2R}(x) \setminus {B_R(x)}$ with  bounded eccentricity.

By Lebesgue differentiation theorem,   there exists $H_R^*(x) \subset H_R(x)$ with full measure such that 
 $$
  \lmt{\mathcal {V} \ni U}{y} \frac{\mm(U)}{{\vol}_\g(U)}=\lmt{\mathcal {V} \ni U}{y} \frac1{{\vol}_\g(U)} \int_{U } e^{-V}\,\d {\vol}_\g = e^ {-V(y) }>0~~~~\forall y \in H_R^*(x).
 $$

For any $y\in H_R^*(x)$ and $0<\delta \ll 1$. There is  a    $L^2$- Wasserstein geodesic $(\mu_t)_{t\in [0,1]}$  with  $\mu_0:=\delta_{x_0}$ and  $\mu_s:=\frac 1{\mm(B_r(x))} \mm \restr{B_r(x)}$ for some $x_0  \in B_{2R}(x) \setminus {B_R(x)}$, $s\in [\frac13, 1]$ and $r=r(\delta)$,  such that  $U=\supp \mu_1$ and
\[
1-\delta <\left |\frac{\frac{\mm(U)}{{\vol}_\g(U)}}{e^ {-V(y)} } \right |<1+\delta.
\]

By measure contraction property, there is a universal constant $C>0$ such that
$$
\epsilon_r(x) {\vol}_\g\big (B_r(x)\big )= \mm\big (B_r(x) \big ) > C \mm(U) > C (1-\delta) {\vol}_\g(U) e^{-V(y)}.
$$
Dividing $r^n$ on both sides and letting $r \to 0$, we get $e^{-V(x)} \gtrsim e^{-V(y)}$.  Recall that $H_R^*(x)$ has full measure in  $H_R(x)$,   we have the following weak mean-value property
\[
e^{-V(x)} \gtrsim \mm\big (B_R(x) \big )>0.
\]

Combining with Proposition \ref{lemma:rcddensity}, we know $V \in L^\infty_\loc$.

\end{proof}

\bigskip

Next we will prove that there is no non-trivial measure other than the volume measure such that a $n$-dimensional  Riemannian manifold satisfies  ${\rm CD}(K, n)$ condition. We remark that this result can also be obtained   by combining  Kapovitch-Ketterer's recent  result \cite[Corollary 1.2]{KK19}, and  Cavalletti-Mondino's  result   \cite[Corollary 8.3]{CM-M} about measure rigidity on Alexandrov spaces (see also   \cite[Theorem 1.4]{KKK19} for a more recent proof).

\begin{theorem}[Measure Rigidity: non-collapsed spaces]\label{prop:noncollap}
Let $(M, {\rm g})$ be a $n$-dimensional  Riemannian manifold.   Assume there exists a  measure $\mm^*$ with full support such that $(M, \d_\g, \mm^*)$ is   ${\rm CD}(K, n)$ for some $K\in \R$.
Then there  exists  a  constant $c>0$ such that $\mm^*=c {\vol}_\g$.
\end{theorem}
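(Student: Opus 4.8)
The plan is to reduce the statement to a fact about the density of $\mm^*$, and then to read off $\nabla V\equiv 0$ from the \emph{first}-order behaviour (in the transport distance) of the measure contraction inequality — a term which is genuinely informative precisely because the reference dimension equals the manifold dimension.

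\textbf{Step 1 (reduction).} Since $\cdkn$ with $N=n$ implies $\cd$, I would invoke Proposition~\ref{lemma:rcddensity}(1) to get $\mm^*\ll\vol$, and then Proposition~\ref{prop:differentiable} to write $\mm^*=e^{-V}\vol$ with $V$ locally Lipschitz (hence continuous and, by Rademacher, differentiable $\vol$-a.e.). It then suffices to show $\nabla V=0$ at every differentiability point of $V$: a locally Lipschitz function with $\vol$-a.e.\ vanishing gradient is locally constant, hence constant on the connected manifold $M$, so that $\mm^*=e^{-c}\vol$ with $c:=V$.

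\textbf{Step 2 (local form of MCP).} Fix a point $z\in M$, a unit vector $\omega\in T_zM$ and $t\in(0,1)$; set $y=y_\ell:=\exp_z(\ell\,\omega)$ for small $\ell>0$, and let $\Phi^y_t(w):=\exp_y\big(t\exp_y^{-1}(w)\big)$. Applying the measure contraction property \eqref{eq-MCP} (which $\cdkn$ yields, here with $N=n$) to $A_0=\{y\}$ and $A_1=B_\eta(z)$, so that $A_t=\Phi^y_t(B_\eta(z))$, and letting $\eta\to0$, I would use the elementary asymptotics $\mm^*(B_\eta(z))=e^{-V(z)}\omega_n\eta^n(1+o_\eta(1))$ and $\mm^*\big(\Phi^y_t(B_\eta(z))\big)=e^{-V(\Phi^y_t(z))}\,|\det D\Phi^y_t(z)|\,\omega_n\eta^n(1+o_\eta(1))$ (valid since $V$ is continuous and $\mathrm{diam}\,\Phi^y_t(B_\eta(z))\to0$), together with $|\det D\Phi^y_t(z)|=t^n\,J_{\exp_y}(tv)/J_{\exp_y}(v)$ where $v=\exp_y^{-1}(z)$ and $J_{\exp_y}$ is the Jacobian of the exponential map. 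This gives, for all small $\ell>0$,
\[
e^{-V(\gamma^{yz}_t)}\,t^n\,\frac{J_{\exp_y}(tv)}{J_{\exp_y}(v)}\;\ge\;\big[\tau^{(t)}_{K,n}(|yz|_\g)\big]^n\,e^{-V(z)},\qquad \gamma^{yz}_t:=\Phi^y_t(z)=\exp_z\big((1-t)\ell\,\omega\big).
\]

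\textbf{Step 3 (first-order analysis: here $N=n$ is used).} Because the reference dimension equals the manifold dimension, the standard expansions $J_{\exp_y}(u)=1-\tfrac16\Ric\big(\tfrac{u}{|u|},\tfrac{u}{|u|}\big)|u|^2+O(|u|^3)$ and $\tau^{(t)}_{K,n}(\ell)=t\big(1+\tfrac{K}{6n}(1-t^2)\ell^2+O(\ell^4)\big)$ have \emph{matching} leading term, so that $[\tau^{(t)}_{K,n}(\ell)]^n/\big(t^n J_{\exp_y}(tv)/J_{\exp_y}(v)\big)=1+O(\ell^2)$, with no $O(\ell)$ term. Hence the displayed inequality becomes $e^{V(z)-V(\gamma^{yz}_t)}\ge 1+O(\ell^2)$, i.e.\ $V(z)-V(\gamma^{yz}_t)\ge O(\ell^2)$. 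On the other hand, at a differentiability point $z$, since $\exp_z^{-1}(\gamma^{yz}_t)=(1-t)\ell\,\omega$,
\[
V(z)-V(\gamma^{yz}_t)=-(1-t)\ell\,\langle\nabla V(z),\omega\rangle+o(\ell).
\]
Comparing these, dividing by $\ell$ and letting $\ell\to0$ forces $-(1-t)\langle\nabla V(z),\omega\rangle\ge0$, i.e.\ $\langle\nabla V(z),\omega\rangle\le0$; since $\omega$ is an arbitrary unit vector, $\nabla V(z)=0$. As this holds $\vol$-a.e., Step 1 concludes.

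\textbf{Expected main obstacle.} The difficulty is conceptual rather than computational: one must recognize that for $N=n$ the $O(\ell)$ term of the measure-contraction inequality survives (for $N>n$ the first nontrivial term is $O(\ell^2)$ and yields only a Bakry-\'Emery-type inequality, which does not force $\nabla V=0$), and that this surviving term is exactly $-(1-t)\langle\nabla V(z),\omega\rangle$. The regularity input — $\mm^*=e^{-V}\vol$ with $V$ locally Lipschitz — is precisely Propositions~\ref{lemma:rcddensity} and \ref{prop:differentiable}; the uniformity of the $o_\eta(1)$ remainders and the exponential-map Jacobian expansion are routine. A smooth-$V$ argument through the Bochner identity and the equivalence between the Bakry-\'Emery and curvature-dimension conditions is also conceivable, but would require first upgrading the regularity of $V$, which is why the transport argument above is preferable.
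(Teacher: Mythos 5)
Your proof is correct, and it takes a genuinely different route from the paper's. Both proofs exploit the measure contraction property (which ${\rm CD}(K,n)$ implies) together with the regularity input of Propositions~\ref{lemma:rcddensity} and~\ref{prop:differentiable}, and both hinge on the fact that for $N=n$ the $\tau$-coefficient expansion and the Riemannian exponential-map Jacobian have \emph{identical} leading behaviour, so that the first-order term in the transport distance isolates the drift of the density. But the mechanics differ. The paper works with the quantity $\mathcal{J}_\mm(x):=\lim_{r\to 0}\big(\mm(B_r(x))/r^n\big)^{1/n}$: it applies the Brunn--Minkowski inequality between small balls around the endpoints of a geodesic, Rauch-compares the midpoint set to a ball, obtains $\mathcal{J}_\mm(\gamma_{1/2})\ge\big(1+O(|xy|_\g^2)\big)\mathcal{J}_\mm(x)$, iterates this estimate along the geodesic subdivided into $N$ pieces, lets $N\to\infty$ to get $\mathcal{J}_\mm(y)\ge\mathcal{J}_\mm(x)$, and concludes by symmetry that $\mathcal{J}_\mm$ is constant (and applies this simultaneously to $\mm^*$ and $\vol$). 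Your argument is instead local and pointwise: you contract a small ball $B_\eta(z)$ towards a nearby point $y=\exp_z(\ell\omega)$, take $\eta\to 0$ to get a clean inequality between $e^{-V}$, the Jacobi-field Jacobian, and $[\tau^{(t)}_{K,n}]^n$, and then let $\ell\to 0$ to read off $\langle\nabla V(z),\omega\rangle\le 0$ for every unit $\omega$ at every Rademacher differentiability point of the Lipschitz function $V$, whence $\nabla V=0$ a.e.\ and $V$ is constant. Your route avoids the iteration/telescoping step and makes the role of $N=n$ transparent (the ``expected main obstacle'' paragraph captures precisely why $N>n$ only yields a ${\rm BE}$-type second-order bound rather than forcing the first-order drift to vanish); what it trades for this is the need to invoke Rademacher's theorem on the Lipschitz representative of $V$, whereas the paper's iteration only needs continuity of the density. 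Both are correct and essentially the same in strength.
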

\begin{proof}
For any $x\in M$,  we can find a  small convex neighborhood  $U$ of it,  such that $(\overline {U}, \d_\g, \mm^*)$ is still  ${\rm CD}(K, n)$ and  $(\overline{U}, \d_\g, \vol_g)$ is a CD$(k, n)$ for some $k\in \R$.  So without loss of generality,  we may assume that any point in $M$ has no cut-locus  and  $(M, \d_\g, {\rm Vol}_{\rm g})$ is  ${\rm CD}(k, n)$ for some $k$.
By Proposition \ref{lemma:rcddensity} and Proposition \ref{prop:differentiable} we know there exists a  positive continuous function  $\varphi$ such that  $\mm^*=\varphi^n {\vol}_\g$. Hence we just need to prove that $\varphi$ is a constant.

Given two points  $x, y \in M$. Let  $\gamma$ be a geodesic from $x$ to $y$.     Let  $\mm=\mm^*=\varphi^n {\rm Vol}_{\rm g}$ and  $\mm= {\rm Vol}_{\rm g}$ respectively.  By  Brunn–Minkowski inequality on ${\rm CD}(K, n)$ spaces (c.f. \cite[Proposition 2.1]{S-O2} ) and Rauch's comparison theorem,   there is $C>0$ such that
\begin{equation}\label{eq1-prop:noncollapsed}
\mm \Big (B_{\frac \sigma2 \big(1+C\d^2_\g(x, y)\big)}(\gamma_\frac12) \Big)^{\frac 1n}   \geq \tau^{(\frac 12)}_{K, n}  (\Theta)\mm \big (B_\sigma(x) \big)^\frac1n
\end{equation}
where  $0<\sigma\ll 1$ and  $ \big | \Theta - \d_\g(x, y) \big |\leq \sigma$.
 
We define  $\mathcal{J}_\mm(x)$ by
\[
\mathcal{J}_\mm(x):=\lmt{r}{0} \left (\frac{\mm\big(B_r(x)\big)}{r^n} \right )^{\frac1n}.
\]
Dividing $\frac\sigma2 \big(1 +C\d^2_\g(x, y)\big)$ on both sides of \eqref{eq1-prop:noncollapsed} and letting $\sigma \to 0$, we obtain
\[
 \mathcal{J}_\mm(\gamma_\frac12 ) \geq \frac 2{1 +C\d^2_\g(x, y)} \tau^{(\frac12)}_{K, n}  \big(\d_\g(x, y)\big) \mathcal{J}_\mm(x).
\]

When $\d_\g(x, y)$ is small, by  Taylor expansion of $\tau^{(\frac 12)}_{K, n}  (\theta)$ we obtain
\[
 \mathcal{J}_\mm(\gamma_\frac 12) \geq \frac {1+O\big(\d^2_\g(x, y)\big)} {1+C\d^2_\g(x, y)} \mathcal{J}_\mm(x).
\]

For any $N>0$, we divide $\gamma$ equally  into $N$ parts. Repeating the argument above on each interval with length $\frac 1N \d_\g(x, y)$ we  get
\[
 \mathcal{J}_\mm(\gamma_\frac {i+1}{N}) \geq \Big ( {1 +o(\frac 1{N})}\Big ) \mathcal{J}_\mm (\gamma_{\frac{i}{N}})~~~~~i=0,..., N-1.
\]
Therefore
\[
 \mathcal{J}_\mm(y) \geq  \Big ( {1 +o(\frac 1{N})}\Big )^N \mathcal{J}_\mm(x).
\]
Letting $N \to \infty$, we obtain $ \mathcal{J}_\mm(y)  \geq  \mathcal{J}_\mm(x)$.   
By symmetry,  we can also prove $ \mathcal{J}_\mm(y)  \leq  \mathcal{J}_\mm(x)$, hence $ \mathcal{J}_\mm(y) = \mathcal{J}_\mm(x)$.  So  $\mathcal{J}_\mm$ is a constant for 
both $\mm ={\vol}_\g$ and $\mm=\mm^*$.  
By Proposition \ref{prop:differentiableL1} we know $\varphi$ is continuous,  so we also have 
\[
 \mathcal{J}_{\mm^*}= \varphi  \mathcal{J}_{{\vol}_\g}.
\]
Therefore $\varphi$ is a constant.

\end{proof}

\subsection{Measure rigidity:  geodesical convexity}

In the last theorem, we study the $\cd$ condition  on  manifolds with  Lipschitz boundary. Without loss of generality, we may restrict our study on an open set  $\Omega \subset M$ with  Lipschitz    boundary, which means that the boundary  $\partial \Omega$  can be written  locally as the graph of a Lipschitz continuous function on $\R^{n-1}$.

As we mentioned in the introduction, no matter how smooth the boundary is, we cannot definitely predict  that  the geodesics are $C^2$. Consider the complement of a disc in  the Euclidean plane.
A geodesic fails to have an acceleration only at those  points which  we call {\bf switch points}, where the geodesic switches from a boundary
segment to an interior segment or vice-versa.  
In addition, besides the switch points, boundary segments, and interior segments, one other kind of point is possible, an accumulation point of switch points, which we call  {\bf intermittent point}.
It is not difficult to   construct a geodesic  whose intermittent points form  a Cantor set with  positive measure. Unfortunately,  it is uncertain which assumptions on the boundary guarantee finite switching behavior. One known result (c.f. \cite{ABB-R}) is that domains in Euclidean plane with analytic boundary have  no intermittent point.  However, thanks to a  theorem proved by Alexander, Berg and Bishop (Theorem 1 \cite{ABB-R}), these intermittent points will not 
 bring us too much trouble in our problem.

\begin{theorem}[Measure rigidity:  $\cd$ condition]\label{th:convex}
Let $(M, {\rm g})$ be a complete  Riemannian manifold,  $\Omega \subset M$ be an open set with Lipschitz boundary. Let  $\d_\Omega$ be the intrinsic distance induced by the  Riemannian distance $\d_\g$ on $\overline{ \Omega}$, and $\mm$ be a reference measure with $\supp \mm=\overline \Omega$.  Assume that $\partial \Omega$ is $C^2$ out of a $\mathcal H^{n-1}$-negligible set, and  $(\overline{\Omega}, \d_\Omega, \mm)$  satisfies the  $\cd$ condition, then we have the following rigidity results.
\begin{itemize}
\item [1)]   $\overline \Omega$ is ${\rm g}$-geodesically convex,  this is to say,  any shortest path in  $(\overline { \Omega}, \d_\Omega)$ is a (unparameterized) geodesic  (segment)  in $(M, {\rm g})$;
\item [2)]   $\mm (\partial \Omega)=0$ and  $\mm=e^{-V}{\rm Vol}_{\rm g}$ for some  semi-convex, locally Lipschitz  function $V$ on $\Omega$;
\item [3)]  $(\overline{\Omega}, \d_\Omega, \mm)$  is a  $\rcd$ space.
\end{itemize}

In particular,   $(\overline{\Omega}, \d_\Omega, {\vol}_\g)$  is $\cd$  if and only if  $\overline \Omega$ is $\g$-geodesically convex and $\Ric \geq K$ on $\Omega$.
\end{theorem}
\begin{proof}

Since all the assertions are local,  without loss of generality, we  may assume that $\overline\Omega$ is compact and points in $\overline\Omega$ do not have cut-locus inside $\overline \Omega$.

First  of all, by Proposition \ref{lemma:rcddensity} and Proposition \ref{prop:differentiable} we know $\mm\restr{\mathop{{\Omega}}} \ll {\vol}_\g$,  ${\vol}_\g \ll \mm\restr{\mathop{{\Omega}}}$ and $\mm=e^{-V}{\rm Vol}_{\rm g}$ for some  semi-convex, locally Lipschitz  function $V$. In particular,  we have
\begin{equation}\label{eq0.1:thm}
\frac{\d \vol_\g}{\d \mm}\in L^\infty(\Omega, \mm),~~~ \frac{\d \mm}{\d \vol_\g} \in L^\infty(\Omega, \vol_\g).
\end{equation}

Given  $x, y \in {\Omega}$ and a parameter $\epsilon>0$ such that $B_\epsilon(x), B_\epsilon(y) \subset \Omega$.  
Firstly, consider  the $L^1$-optimal transportation on $(\overline{\Omega}, \d_\Omega)$ between  $\mu^\epsilon_0:= \frac 1{\mm\big(B_ \epsilon (x)\big)} \mm \restr{B_ \epsilon (x)}$ and $\mu^\epsilon_1:= \frac 1{\mm\big(B_ \epsilon (y)\big)} \mm \restr{B_\epsilon (y)}$. 
Let $(\mu^\epsilon_t)_t$  be a  geodesic from $\mu^\epsilon_0$ to $\mu^\epsilon_1$ in $L^1$-Wasserstein space $\mathcal{W}_1(\overline{\Omega}, \d_\Omega)$. Denote  by $\Pi^\epsilon$ its lifting in $ {\mathcal P}\big({\rm Geod}(\overline\Omega, \d_\Omega)\big)$  satisfying  $(e_t)_\sharp \Pi^\epsilon =\mu_t^\epsilon$.  By $L^1$-optimal transport theory,   there exists a  Kantorovich potential $\varphi$ associated with such optimal transportation, which  is a  1-Lipschitz function.   Let  $ \Gamma^\varphi$  be   the subset  of ${\rm C}\big([0,1]; (\overline \Omega, \d_\Omega)\big)$ containing all the (parametrized)  trajectories   of the gradient flow of  $\varphi$.   It  is known that $\Pi^\epsilon(\Gamma^\varphi)=1$.

For $0<\delta \ll \frac 12$ small enough,  $(\mu^\epsilon_t)_{t\in [0, \delta]}$ and $(\mu^\epsilon_t)_{t\in [1- \delta, 1]}$ are  also $L^1$-Wasserstein geodesics (segments) in $\mathcal{W}_1(\overline{\Omega}, \d_\g)$. 
By needle decomposition via $L^1$-optimal transport (c.f. Theorem 3.8, Theorem 5.1 \cite{CM-ISO}),  there is $\Gamma \subset \Gamma^\varphi$ such that $\Pi^\epsilon(\Gamma^\varphi \setminus \Gamma)=0$ and  $(\gamma_t)_{t\in [0,\delta] \cup [1-\delta,1]}, \gamma \in \Gamma$ are pairwisely disjoint.  In addition,  the measure  ${\vol}_\g \restr{B_\epsilon(x)}$ (and similarly ${\vol}_\g \restr{B_\epsilon(y)}$) has a decomposition
\begin{equation}\label{eq1:thm}
{\vol}_\g \restr{B_\epsilon(x)}=\int_\Q \mm_q \, \d \mathfrak{q}
\end{equation}
where $\Q$ can be represented  locally as  a level set of $\varphi$, and $(\mm_q)_q$ support on disjoint geodesic segments $(X_q)_q$. Furthermore,  $\mm_q \ll \mathcal H^1$ and $h_q= \frac{\d \mm_q}{\d \mathcal H^1}$ is a ${\rm CD}(k, n)$ density for $\mathfrak{q}$-a.e. $q$.,  this means that  $(X_q, |\cdot|, \mm_q)$ satisfies  ${\rm BE}(k, n)$ condition in the sense of Bakry-\'Emery. 
Thus for any $q$, $h_q$ is Lipschitz  and  it can not vanish at the interior points  of $X_q$.

Next  we  will construct a $L^2$-optimal transportation based on  $\Pi^\epsilon$ and $\Gamma$. 

 Denote the level set $\{ \varphi=T\}$ by $\varphi_T$.  For any $z \in B_\epsilon(x)$,  there exist a $\gamma^z \in \Gamma$  such that $z \in  \gamma^z$, and  a unique  $T_z:=\varphi(z)$ such that $z = \varphi_{T_z} \cap \gamma^z$.  In addition, by Fubini's theorem, there exists $T_0$ such that $B^*:=\{z:  \gamma^z \cap \varphi_{T_z-T_0}\in B_\epsilon(y)\} \cap B_\epsilon(x)$ has positive ${\vol}_\g$-volume.
It can be seen that $\mathrm{Cpl}:=\{(z_1, z_2): z_1 \in B^*, z_2 \in \gamma^{z_1} \cap \varphi_{T_{z_1}-T_0} \} \subset   B_\epsilon(x) \times B_\epsilon(y)$  is still a $L^1$-optimal transport coupling.
Furthermore,  we have
\begin{eqnarray}\label{eq2:thm}
&&\Big (\varphi(y_1)-\varphi(y_0)\Big )\Big (\varphi(x_1)-\varphi(x_0)\Big )\\
&=&\Big (\big (\varphi(x_1)-T_0 \big)-\big ( \varphi(x_0)-T_0 \big) \Big ) \Big (\varphi(x_1)-\varphi (x_0)\Big )\\
&=& \Big (\varphi(x_1)-\varphi (x_0)\Big )^2 \geq 0
\end{eqnarray}
for any $(x_0, y_0), (x_1, y_1) \in \mathrm{Cpl}$.
By Lemma 4.6 in  \cite{Cavalletti-D},  we know $\mathrm{Cpl}$ is  $\d^2_\Omega$-cyclically monotone, so that it is  also  a  $L^2$-optimal transport coupling (c.f. Theorem 2.13 \cite{AG-U}).  From the construction of $\mathrm{Cpl}$ we know
$\big (\mathrm{Cpl} \big)_{z_1}=B^*$ has  positive $\mm$-measure,  and by measure decomposition \eqref{eq1:thm} and the regularity of ${\rm CD}(k, n)$ densities, we  also have $\mm\Big (\big ( \mathrm{Cpl}\big)_{z_2} \Big )>0$.
Then  by renormalization we obtain a curve, still denote it  by $(\mu^\epsilon_t)$, which is a $L^1$-Wasserstein geodesic, as well as a $L^2$-Wasserstein geodesic.    From the construction above, we can  see that  both $\mu^\epsilon_0, \mu^\epsilon_1$ have bounded  $\mm$-densities.

  To prove the geodesical convexity of $\overline {\Omega}$,  we just need to show that  $\Pi^\epsilon \big({\rm Geod}(\overline \Omega, \d_\Omega)\setminus {\rm Geod}(M, {\rm g}) \big )=0$, then letting $\epsilon \to 0$  we know that $x$ and  $y$ are connected by a geodesic in $ (M, {\rm g})$.

 Let   $\mathrm {R}$ be  the set of $C^2$-regular points of $\partial \Omega$.  By  assumption, $\mathcal{H}^{n-1}(\partial \Omega \setminus \mathrm R)=0$.  It can be seen that $ {\rm Geod}(\overline \Omega, \d_\Omega)$ has a decomposition  $  {\rm Geod}(\overline \Omega, \d_\Omega)=\Gamma^1 \cup \Gamma^2  \cup \Gamma^3$, where
\begin{itemize}
\item [a)] $\Gamma^1=\Big \{\gamma:  \mathcal{H}^1(\gamma \cap \mathrm R) >0 \Big \}$;
\item [b)] $\Gamma^2=\Big \{\gamma: \mathcal{H}^1(\gamma \cap \mathrm R) =0$, $\gamma \cap \partial \Omega \subset \mathrm {R} \Big \}$;
\item [c)] $\Gamma^3=\Big \{\gamma: \mathcal{H}^1(\gamma \cap \mathrm R) =0$, $\gamma \cap \partial \Omega \setminus \mathrm {R} \neq \varnothing\Big \}$.
\end{itemize}
We will   prove $\Pi^\epsilon  (\Gamma^i )=0$ for $ i=1, 3$,  and $\Gamma^2 \subset  {\rm Geod}(M, {\rm g})$ in the following three steps.

\bigskip

{\bf Step 1}:  $\Pi^\epsilon(\Gamma^1)=0$.

   By  Proposition \ref{lemma:rcddensity} and Proposition \ref{prop:differentiable},  there exists a locally Lipschitz and  semi-convex function $V$ such that
 $$
 \mm=e^{-V}{\rm Vol}_{\rm g} \restr{\Omega}+\mm\restr{\partial \Omega\setminus \mathrm{R}}+\mm\restr{ \mathrm{R}}.
 $$  
 In particular, $\mm \restr{\Omega} < C_0 {\rm Vol}_{\rm g} \restr{\Omega}$ for some $C_0>0$.
 
\bigskip

{\bf Claim}:  $\mm(\mathrm{R})=0$, therefore  $\mm=e^{-V}{\rm Vol}_{\rm g} \restr{\Omega}+\mm\restr{\partial \Omega\setminus \mathrm{R}}$.

Assume that  $\partial \Omega$ is locally represented  as the graph  of a bi-Lipschitz function  $\phi$ on $U \subset \R^{n-1}$, and 
\[
\frac 1L |ab|<\d_\g(x, y) < L |ab|~~~\forall a, b\in U,~x=(a, \phi(a)), y=(b, \phi(b))
\] for some $L>1$.  For any  $a \in U$ with $(a, \phi(a) )\in  \mathrm {R}$, there is a unique tangent plane $\d \phi(a)$ at this point and  
 \[
 \lmt{r}{0} \mathop{\sup }_{b\in U, |b-a|<r} \frac {\big |\phi(b)-\phi(a)-\d \phi(a)(b-a) \big |}{|b-a|}=0.
 \]
In particular,   there exists  a unique  inward  (unit) normal vector field  $ \mathrm R \ni x \mapsto {\rm N}_x $   such that  ${\rm N}_x \perp \d \phi(x)$.  Furthermore, for any $x \in \mathrm R$, 
there is $\delta(x)>0$ such that  for any $r\leq \delta(x)$, all $\d_\Omega$-geodesics from $B_{ {r}} \big ( \exp_x( {3r}  {\rm N_x})\big)$ to $x$ are $ \g$-geodesics.

  Assume  by contradiction that $\mm(\mathrm{R}) \neq 0$.  By Lusin's theorem there exists ${\mathrm R}^* \subset \mathrm R$  with  $\mm({\mathrm R}^*)>0$ and a constant $r_0>0$,  such that $\delta(x) \geq r_0$ and  the  map $x \to {\rm N}_x$ is continuous on $\mathrm R^*$.
 
 Let $x \in \mathrm R^* \cap \supp \mm \restr{\mathrm R^*}$.
   There is a neighbourhood $U_x \subset \partial \Omega$ of $x$,  such that  all the $\d_\Omega$-geodesics connecting $U_x \cap  \mathrm R^*$ and $B_{\frac{r_0}{2}}\big( \exp_x({3r_0} {\rm N_x})\big)$ are $ \g$-geodesics.   By Lemma \ref{lemma:sigular},   we get the contradiction.  Therefore  $\mm(\mathrm{R})=0$ and we prove the claim.

   \bigskip
   
Assume by contradiction that $\Pi^\epsilon(\Gamma^1)>0$.  By Fubini's theorem we know
\[
(\Pi^\epsilon \times L^1)\Big (\big \{(\gamma, t): \gamma\in \Gamma_1, t\in [0,1], \gamma_t \in  \mathrm R \big \} \Big ) >0,
\]
and there is  $t_0 \in [0,1]$ such that 
\[
\Pi^\epsilon \Big (\big \{\gamma: \gamma\in \Gamma^1,  \gamma_{t_0} \in \mathrm R \big \}\Big) >0,
\]
so $\mu_{t_0}^\epsilon(\mathrm R)>0$, which contradicts to the facts that $\mu^\epsilon_{t_0} \ll \mm$ and  $\mm\restr{\mathrm R}=0$.
Therefore $\Pi^\epsilon(\Gamma^1)=0$.

\bigskip

{\bf Step   2}:  $\Gamma^2 \subset  {\rm Geod}(M, {\rm g})$.

 Let $\gamma \in \Gamma^2$. For any   $t \in [0,1]$ with  $\gamma_t \in   \Omega$,  it is known that  $\ddot{\gamma}_t=0$.  For any   $t \in [0,1]$ with  $\gamma_t \in \partial  \Omega$.  From the definition  of $\Gamma^2$ we know $\gamma_t \in \mathrm R$,  so $\dot{\gamma}_t$ exists. 

Since $\gamma \cap \partial \Omega$ is closed and $\mathcal H^{1}$-negligible,  any $\gamma_t \in \mathrm R$ is either an isolate point or an intermittent point. For isolate points, by elementary calculus we know  $\ddot{\gamma}_t$ exists and equals to $0$. For  intermittent points, by Theorem 1  in \cite{ABB-R} we also know  $\ddot{\gamma}_t$ exists and equals to $0$.

 So  $\ddot{\gamma}_t \equiv 0$ and $\gamma \in  {\rm Geod}(M, {\rm g}) $ which is the thesis.

\bigskip

{\bf Step 3}:    $\Pi^\epsilon(\Gamma^3)=0$.

By definition and  $\Gamma^2 \subset  {\rm Geod}(M, {\rm g})$ which is proved in the last step,   for any $\gamma \in \Pi^\epsilon(\Gamma^3)$,  there is a point  $b(\gamma) \in \gamma \cap \partial \Omega \setminus \mathrm {R}$ such that the segment from $\gamma_0$ to $b(\gamma)$ is a Riemannian geodesic (segment).

%For any $t\in (0, 1]$, we  define
%$$
%\Gamma^{3, t}:=\Big \{\gamma: \gamma \in \Gamma^3, (\gamma_s)_{s\in [0, t)} \subset \Omega \cup \mathrm{R}, \gamma_{t}\in \partial \Omega \setminus \mathrm {R} \Big   \}.
%%$$  
Recall the decomposition  ${\vol}_\g \restr{B_\epsilon(x)}=\int_\Q \mm_q \, \d \mathfrak{q}$ in \eqref{eq1:thm} and keep the notations thereafter.   For any $z \in B_\epsilon(x)$,  there a longest $\gamma^z \in \Gamma^3$  such that $z \in  \gamma^z$, and  a unique  $T_z =\varphi(z) \in\R$ such that $z = \varphi_{T_z}$.
%\[
% \Gamma^3=\mathop{\cup}_{t\in (0,1]} \Gamma^{3, t}.
%\]
%Thus  the inverse of the evaluation map $e_t: [0,\delta]\times \Gamma^3 \ni (t, \gamma) \mapsto \gamma_t$ is well defined on its image, and  we define $e^{-1}(x) \in [0, \delta]$ by
%\[
%e^{-1}(x)=t,~~\text{if}~~x=\gamma_t~~\text{for some}~~\gamma\in \Gamma^3.
%\]
In addition,  for such $\gamma^z$, there is a unique $q^z \in \Q$ such that $\supp \mm_{q^z} \subset \gamma^z$.

Assume by contradiction that  $\Pi^\epsilon(\Gamma^3)>0$.  Denote $\Q^T$ by
\[
\Q^T:=\Big \{ q^z: z\in B_\epsilon(x), \gamma^z\in \Gamma^3, \varphi(\gamma^z_0)-\varphi(z)>\frac \epsilon 8~\text{and}~\varphi_ {T_z-T} \cap \gamma^z=b(\gamma^z)   \Big \}.
\]
Then by Fubini's theorem,  there is $T_1>0$ such that 
$\mathfrak{q}\Big (  \Q^{T_1} \Big )>0$.

%So  by \eqref{eq3.1-th:convex} and measure disintegration theorem (c.f. Theorem 5.3.1 %\cite{AGS-G}), we know  there  is  $t_0 \in (0, \delta)$ such that
%\[
%0<\mathcal{H}^{n-1}\Big (  e_{t_0}( \Gamma^3) \Big ) <\infty,
%\]
%and it holds
%\[
%\d_\g(\gamma_0, a(\gamma) )>\delta_0~~\text{for all}~~\gamma \in \bar \Gamma^3.
%\]

For  $\sigma \in (0, \delta)$,   we define  couplings $\mathrm{Cpl}^{1,2}_\sigma, \mathrm{Cpl}^{1,3}_\sigma \subset M \times M$ by
\[
\mathrm{Cpl}^{1,2}_\sigma:=\Big \{(z_1, z_2):  z_1 \in \gamma \cap \varphi_{T_{b(\gamma)}+T_1-s\sigma},     z_2 \in \gamma \cap \varphi_{T_{b(\gamma)}+(1-s)\sigma},     \gamma \in \Gamma^{3}, s\in [0,1]  \Big \}
\]
and
\[
\mathrm{Cpl}^{1,3}_\sigma:=\Big \{(z_1, z_3):  z_1 \in \gamma \cap \varphi_{T_{b(\gamma)}+T_1-s\sigma},     z_3 \in \gamma \cap \varphi_{T_{a(\gamma)}-s\sigma-T_0},     \gamma \in \Gamma^{3}, s\in [0,1]  \Big \}.
\]

By the choice of $T_1$ and the regularity of conditional measures $\mm_q$,  we can  see that 
$\mm\big((\mathrm{Cpl}^{1,2}_\sigma)_{z_1}\big),  \mm\big((\mathrm{Cpl}^{1,3}_\sigma)_{z_3}\big)>0$. More precisely,  we have
\begin{equation}\label{eq3:thm}
\vol\Big((\mathrm{Cpl}^{1,2}_\sigma)_{z_1}\Big)\geq \int_{\Q^{T_1}} \mm_q \Big(\big\{\varphi_{T_{b(\gamma)}+T_1-s\sigma}\cap \gamma: \gamma \in \Gamma^3, s\in [0, 1]\big\}\Big) \, \d \mathfrak{q}=O(\sigma)>0,
\end{equation}
and similarly
\begin{equation}\label{eq3.1:thm}
\vol\Big((\mathrm{Cpl}^{1,2}_\sigma)_{z_1}\Big)\gtrsim \sigma.
\end{equation}

From the construction,  we can see that these couplings are $\d_\Omega$-cyclically monotone, as well as  $\d^2_\Omega$-cyclically monotone (c.f.  \eqref{eq2:thm} and Lemma 4.6  \cite{Cavalletti-D}). Therefore, they are both $L^1$-optimal and $L^2$-optimal (c.f. Theorem 2.13 \cite{AG-U}).   By renormalization and reparameterization,  we can   find  a Wasserstein geodesic $(\nu^\sigma_t)$  in $\mathcal{W}_2(\overline{\Omega}, \d_\Omega) \cap \mathcal{W}_1(\overline{\Omega}, \d_\Omega)$,    such that $\mathrm{Cpl}^{1,2}_\sigma$ is the optimal coupling for $(\nu^\sigma_0, \nu^\sigma_\frac12)$ and $\mathrm{Cpl}^{1,3}_\sigma$ is the optimal coupling for $(\nu^\sigma_0, \nu^\sigma_1)$ and
  \begin{itemize}
\item [1)]   $(\nu^\sigma_t)_{t\in [0,\frac12]}$ is  a geodesic segment  in Wasserstein space $\mathcal{W}_2 (\overline{\Omega}, \d_\g)$;
\item [2)]   Given $\delta \in (0, \frac12)$,   $(\nu^\sigma_t)_ {t\in [0,\frac12-\delta] \cup \{1\}}$ have uniformly  bounded  $\mm$-densities;
\item [3)]   $\mm(\supp \nu^\sigma_0) \gtrsim \sigma$ and   $\mm(\supp \nu^\sigma_1)\gtrsim \sigma$ (by  \eqref{eq3:thm}, \eqref{eq3.1:thm} and \eqref{eq0.1:thm}).
\end{itemize}

Moreover, since  $\mathcal{H}^{n-1}(\partial \Omega \setminus \mathrm {R})=0$,  by Rauch's comparison theorem we know ${\vol}_\g(\supp \nu^\sigma_\frac12) \lesssim  \sigma^n$, so that  $\mm(\supp \nu^\sigma_\frac12) \lesssim  \sigma^n$.

Since  $(\overline{\Omega}, \d_\Omega, \mm)$ is $\cd$,   by Lemma 3.1  \cite{Rajala12} there exists a $L^2$-Wasserstein geodesic   $(\bar{\nu}^\sigma_t)_{t\in [0,1]} \subset \mathcal{W}_2(\overline{\Omega}, \d_\Omega)$ with uniformly  bounded densities,   connecting  $\nu^\sigma_\frac14$ and  $\nu^\sigma_1$ such that 
\[
\mm(\supp {\bar \nu}^\sigma_t) \gtrsim  \min \Big  \{\mm(\supp \nu^\sigma_\frac14), \mm(\supp \nu^\sigma_1) \Big \},~~~t\in [0,1].
\]
It is known that  Riemannian manifolds are essentially non-branching, hence  ${\nu}^\sigma_s \in (\bar{\nu}^\sigma_t)$ for all $ {s\in [\frac14, \frac12]}$. In particular, there exists $t_1 \in (0, 1)$ such that  
 ${\nu}^\sigma_{\frac 12} =\bar{\nu}^\sigma_{t_1}$.
Therefore we have
\[
\sigma^n \gtrsim \mm(\supp { \nu}^\sigma_\frac12)=\mm(\supp {\bar \nu}^\sigma_{t_1}) \gtrsim O(\sigma)
\]  which  is the 
contradiction. 
Therefore  $\Pi^\epsilon(\Gamma^3)=0$.

\bigskip

In conclusion, we have proved that   $(\overline{\Omega}, \d_\Omega, \mm)$   is  $(M, \g)$-geodesically convex. By Lemma \ref{lemma:sigular} we have  $\mm\restr{\partial \Omega} = 0$.
 Therefore $\mm=e^{-V}{\rm Vol}_{\rm g} \restr{\Omega}$ for some Lipschitz function $V$.  In particular,   $(\overline{\Omega}, \d_\Omega, \mm)$ is infinitesimally Hilbertian and  it satisfies $\rcd$ condition.

\end{proof}

\def\cprime{$'$}

\end{document}